\newtheorem{theo}{Theorem}[section]
\newtheorem{prop}[theo]{Proposition}
\newtheorem{lem}[theo]{Lemma}
\newtheorem{cor}[theo]{Corollary}
\newtheorem{example}[theo]{Example}
\newtheorem{defi}[theo]{Definition}
\newtheorem{ques}[theo]{Question}
\newtheorem*{ques*}{Question}
\def\R{{\mathbb R}}
\def\Z{{\mathbb Z}}
\def\N{{\mathbb N}}
\def\S{{\mathbb S}}
\def\cH{{\mathcal H}}
\def\cS{{\mathcal S}}
\def\cX{{\mathcal X}}
\def\ie{{i.e.}}  
\def \RP {{\bf RP}}
\def \ND {\operatorname{ND}_{\ell_2}}
\def \NE {\operatorname{NE}}
\def\dist{{\rm dist}}
\title[A geometric framework for asymptoticity and expansivity]{A geometric framework for asymptoticity and expansivity in topological dynamics}
\author{Sebasti\'an Donoso}
\address{Departamento de Ingenier\'{\i}a
	Matem\'atica and Centro de Modelamiento Ma\-te\-m\'a\-ti\-co, Universidad de Chile and IRL-CNRS 2807, Beauchef 851, Santiago,
	Chile.} 
\email{sdonoso@dim.uchile.cl}
\author{Alejandro Maass}
\address{Departamento de Ingenier\'{\i}a
	Matem\'atica and Centro de Modelamiento Ma\-te\-m\'a\-ti\-co, Universidad de Chile and IRL-CNRS 2807, Beauchef 851, Santiago,
	Chile.}
\email{amaass@dim.uchile.cl}
\author{Samuel Petite}
\address{Laboratoire Ami\'enois
	de Math\'ematiques Fondamentales et Appliqu\'ees, CNRS-UMR 7352, Universit\'{e} de Picardie Jules Verne, 33 rue Saint Leu, 80039   Amiens cedex 1,
	France.} \email{samuel.petite@u-picardie.fr}
\thanks{The first and second authors were partially funded by Centro de Modelamiento Matemático (CMM), ACE210010 and FB210005, BASAL funds for centers of excellence from ANID-Chile. The third author was partially funded by the ANR Project IZES  ANR-22-CE40-0011. All authors are part of the ECOS-ANID grant C21E04 (ECOS210033). The first author was partially funded by ANID/Fondecyt/1200897}
\subjclass[2020]{Primary: 37B05; Secondary: 54H15, 20F65, 37B10, 37B20}
\keywords{asymptoticity, expansivity, horoballs}
\begin{document}
\date{\today}
\maketitle

\begin{abstract}
In this paper we develop a geometric framework to address asymptoticity and nonexpansivity in topological dynamics when the acting group is second countable and locally compact. As an application, we show extensions of Schwartzman's theorem in this context. Also, we get new results when the acting groups is ${\mathbb Z}^d$: any half-space of $\R^d$ contains a vector defining a (oriented) nonexpansive direction in the sense of Boyle and Lind. Finally, we deduce rigidity properties of distal Cantor systems.
\end{abstract}

\section{Introduction}
Let $T\colon X\to X$ be a self-homeomorphism of an infinite compact metric space $X$. S. Schwartzman proved in \cite{Sch} the following celebrated result: given $\epsilon >0$, there exist different points $x,y \in X$ such that for all $n\geq 0$ one has 
$${\rm dist}(T^nx,T^ny)\leq \epsilon,$$
where ${\rm dist}$ is the distance in $X$. That is, there exist positive $\epsilon$-asymptotic pairs. Considering $T^{-1}$ in place of $T$, the same result implies that there exist negative $\epsilon$-asymptotic pairs \footnote{This means that there are two different points that are $\epsilon$-close for all $n\leq 0$.}. Another way to state the same result is by saying that in an infinite compact metric space it is impossible to construct positively expansive homeomorphisms. This result has been recovered independently and in different flavours by several authors (see for instance \cite{AKM,BO,BD, GottschalkHedlund, KeynesRobertson, King, MS, RichesonWiseman} and the survey \cite{KeaneCoven}).

Motivated by the study of subactions of higher rank abelian actions, M. Boyle and D. Lind obtained in \cite{BD} a major result in the spirit of Schwarztman's theorem. They introduced the notion of {\em nonexpansivity} along a subspace of $\R^d$, by considering only the action of elements of $\Z^d$ that lie within a given bounded distance of the subspace. They showed (see also \cite{ELMW}) that for an infinite compact metric space $X$, there exist a half-space $H \subseteq \R^d$ and different points $x,y$ in $X$ that stay arbitrarily close along their $H\cap \Z^d$-orbit (see Section \ref{sec:closedness}). Such half-space is called {\em nonexpansive} since the action of its border is nonexpansive.

Let us point out that unlike Schwartzman's result, where the two half-spaces of $\Z$ are nonexpansive, here only one nonexpansive half-space is guaranteed to exist. Actually, not any half-space is nonexpansive for an arbitrary $\Z^2$ system. Explicit examples like Ledrappier's $\Z^2$ subshift illustrate that the set of such half-spaces can even be finite. A motivation for studying the set of nonexpansive half-spaces is that many dynamical properties of the subactions vary nicely  along  the connected components of the set of expansive directions and a bifurcation phenomenon may occur when passing from one component to another. 

To complete their result, Boyle and Lind, and  later M. Hochman, realized any nonempty closed set of half-spaces as the set of nonexpansive ones of some $\Z^2$-action. In particular, Hochman succeeded in realizing the challenging case where this set consists of a single half-space whose border line has an  irrational slope \cite{Hochman:2011}. 

The notions of nonexpansive half-spaces and asymptotic points have turned out to be the key notions to tackle various problems. For instance, in dynamics, they appear as fundamental objects for studying expansive maps (e.g., \cite{M}), their centralizer and cellular automata (e.g., \cite{Nasu,DonosoDurandMaassPetite:2016}), or to study topological joinings (e.g., \cite{King}). For $\Z^d$-actions, they are main notions to explore directional invariants like directional entropy (e.g., \cite{Park}). They also appear in combinatorics when addressing the long standing Nivat conjecture (e.g., \cite{CyrKra:2015}). Despite the breadth of possible applications, no similar concept has been proposed for general group actions. 

The definition and the proofs in \cite{BD} seem to rely heavily on the linear structure of $\R^d$, so extensions of these results to general group actions were open for a long time. In this work, we develop a framework using basic elements of geometric group theory that allows us to give a  definition of nonexpansiveness for actions of  countable groups and more generally for second countable and locally compact topological groups.  
The notion of half-space is replaced by that of horoball (see \cref{sec:horoball} for the precise definition). Horoballs are sublevel sets  of horofunctions, which are the analogous to linear forms on general metric spaces. They depend on the choice of a proper right-invariant distance on the acting group. We warn the reader that, depending on the group and the metric, horoballs can contain arbitrary large balls, like in the case of finitely generated groups, or can even be empty, like in the case of infinite sum of finite groups (see \cref{lem:BigHoroball} and the discussion around it). Anyway, using this framework, we show the following extension of Boyle and Lind's results, which is meaningful for a large class of groups including the finitely generated ones.

\begin{theo}\label{thm_1_intro} Let $ (X,T,G)$ be an infinite topological dynamical system and  $G $ be an infinite  group with a proper right-invariant distance. Then, for all $\epsilon >0$ there exist different points $x, y \in X$ and a horoball $H$ such that $\dist(T_gx,T_gy)\leq \epsilon$ for all $g\in H$.
\end{theo}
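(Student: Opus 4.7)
My plan is to extend the Schwartzman--Boyle--Lind strategy from linear half-spaces to the horofunction framework. Fix $\epsilon > 0$. For each $g \in G$, introduce the pre-horofunction $\xi_g(k) := d(k, g) - d(e, g)$ and its sublevel set $C_g := \{k \in G : d(k, g) \leq d(e, g)\}$. Since each $\xi_g$ is $1$-Lipschitz and vanishes at $e$, the family $\{\xi_g\}_{g \in G}$ is precompact in $C(G, \R)$ for the topology of pointwise convergence, and any accumulation point $\xi$ of $\xi_{g_n}$ with $d(e, g_n) \to \infty$ is by definition a horofunction, with associated horoball $H = \{\xi \leq 0\}$.

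The core of the proof will be the construction, for each $g$ with $d(e,g)$ large, of a distinct pair $(x_g, y_g) \in X \times X$ satisfying $\dist(T_k x_g, T_k y_g) \leq \epsilon$ for every $k \in C_g$, uniformly $\delta$-separated in $g$ for some $\delta > 0$. Existence of $\epsilon$-asymptotic pairs on a fixed finite window $F \subset C_g$ follows from a pigeonhole argument: cover $X$ by finitely many balls of radius $\epsilon/2$; since $X$ is infinite, two distinct points share the same itinerary along $F$ in this cover, hence their orbits differ by less than $\epsilon$ on $F$. A diagonal extraction over nested finite exhaustions of $C_g$, together with compactness of $X \times X$, upgrades this to asymptoticity on all of $C_g$. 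Once such pairs are in hand, compactness of $X \times X$ and of the horofunction space permits simultaneous extraction of a subsequence $g_n$ along which $(x_{g_n}, y_{g_n}) \to (x_\infty, y_\infty)$ and $\xi_{g_n} \to \xi$; for each $k \in H$, pointwise convergence $\xi_{g_n}(k) \to \xi(k) \leq 0$ places $k \in C_{g_n}$ eventually, so the bound $\dist(T_k x_{g_n}, T_k y_{g_n}) \leq \epsilon$ passes to the limit.

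The main obstacle is ensuring a uniform lower bound $\dist(x_g, y_g) \geq \delta > 0$, without which the limit pair $(x_\infty, y_\infty)$ could lie on the diagonal. A naive pigeonhole yields only $\dist(x_g, y_g) < \epsilon$, which is not enough. Resolving this is the technical heart of the argument. One natural approach is to select the pigeonhole pair from a judiciously chosen subset of $X$ whose pairwise distances are bounded below and whose translates by elements of $C_g$ remain controlled, using the right-invariance of the distance on $G$ to transfer separation estimates across the family of windows. Alternatively, one may invoke the Ellis semigroup of $(X, G)$ to extract an idempotent inside a closed left-ideal associated to the horoball direction, producing a nontrivial off-diagonal pair intrinsically; upgrading the resulting proximality to genuine $\epsilon$-asymptoticity on the full horoball then still requires a supplementary finite-window argument of the type described above.
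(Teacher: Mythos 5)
Your strategy is genuinely different from the paper's: you attempt a direct construction --- build a well-separated pair for each large ball $C_g$ via pigeonhole, then extract a limit along $g\to\infty$ --- whereas the paper argues by contradiction through the Robinson Crusoe theorem (\cref{theo:SchwartzmannVersionAbstraite}). You correctly identify the crux: a uniform lower bound $\dist(x_g,y_g)\geq\delta$ so that the limit pair is off the diagonal. This gap is genuine, and neither repair you sketch closes it. Running the pigeonhole inside a $\delta$-separated subset of $X$ cannot work: a $\delta$-separated subset of a compact metric space is finite, so once the window $F$ is large enough the number of itineraries $m^{|F|}$ exceeds the number of available candidate points and no collision is forced. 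The Ellis semigroup yields proximal pairs (infimum of separation equal to $0$), not $\epsilon$-closeness on an entire horoball; as you acknowledge yourself, bridging the two requires the very finite-window estimate you started from. As written, the proposal therefore stops short of a proof.

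The paper avoids the separation problem by never constructing the pair directly. Applying \cref{theo:SchwartzmannVersionAbstraite} to the invariant open set $O=(X\times X)\setminus\Delta_X$ (inside $R_\pi$ in the general factor case, \cref{theo:factors_general1}), it considers $d_0=\inf_{z\in O,\ H\in\cH}\sup_{g\in H}\dist\bigl(T^{(2)}_g z,\partial O\bigr)$ and shows $d_0=0$ by contradiction; the witness lies in $O$ and is thus automatically off-diagonal, with no limit over pairs. The contradiction argument replaces your pigeonhole by a compactness step making a window constant $M_\epsilon$ finite, then selects the element $g_*$ of smallest norm at which a near-diagonal orbit leaves the $\epsilon$-neighbourhood, and finally invokes the horoball geometry of \cref{lem:geomHoroball} --- a far-away ball is locally tangent near $1_G$ to a horoball --- to contradict the choice of $g_*$. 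It is precisely this minimal-separating-element step, combined with \cref{lem:geomHoroball} and the nonemptiness input of \cref{lem:NonemptyHoroball}, that your outline would need to supply; the pigeonhole alone, in any of the variants you describe, does not yield the required separation.
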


Actually, \cref{thm_1_intro} is derived from the following more general theorem, which we believe is of independent interest and might be useful for other problems in topological dynamics. 
\begin{theo}[Robinson Crusoe theorem]\label{theo1:SchwartzmannVersionAbstraite} Let $ (X,T,G)$ be a topological dynamical system and  $G $ be an infinite group with a proper right-invariant distance. Let $O \subsetneq X$ be an open, not closed and $G$-invariant ($T_g(O) = O, \forall g \in G$) subset of $X$. Then, for any neighborhood $U$ of the boundary of $O$ there exists a horoball $H$ in $G$ such that 
	$$O \cap \bigcap_{g \in H}T_g^{-1}(U)\neq \emptyset.$$
\end{theo}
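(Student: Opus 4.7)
The plan is a ``fall off the cliff'' argument: take a sequence in $O$ approaching $\partial O$, follow each orbit until it first leaves a tubular neighborhood of $\partial O$, and let $y_\infty$ be a limit of the exit points. First, pick an auxiliary open set $V$ with $\partial O\subset V\subset \overline V\subset U$, possible since $\partial O$ and $X\setminus U$ are disjoint closed subsets of the compact metric space $X$. Choose $x_n \in O$ with $x_n \to x_\infty \in \partial O$, which is available because $O$ is not closed. For each $n$, either $T_g x_n \in V$ for every $g\in G$ (in which case $x_n$ already meets the conclusion for any horoball), or the nonempty closed set $\{g\in G: T_g x_n \notin V\}$ admits, by properness of $d$, a minimizer $g_n$ of $d(e,\cdot)$. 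Set $s_n := d(e, g_n)$.

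The next step is to show $s_n \to \infty$. The $G$-invariance of $O$ yields $G$-invariance of $\partial O$, so $T_g x_\infty \in \partial O \subset V$ for every $g$; joint continuity on each compact set $B_R(e)\times\{x_\infty\}$ then gives $T_g x_n \in V$ for all $g\in B_R(e)$ once $n$ is large, whence $s_n > R$ eventually. Now set $y_n := T_{g_n} x_n \in O\setminus V$ and extract a convergent subsequence $y_n \to y_\infty \in \overline{O}\setminus V$. Since $\partial O\subset V$ is disjoint from $\overline{O}\setminus V$, one has $y_\infty \in \overline{O}\setminus \partial O = O$. This is the candidate point.

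It remains to build the horoball. By right-invariance, $d(e, g g_n) = d(g_n^{-1}, g)$, and the minimality of $g_n$ gives $T_g y_n = T_{g g_n} x_n \in V$ whenever $d(g, g_n^{-1}) < s_n$, i.e.\ whenever the function $h_n(g) := d(g, g_n^{-1}) - d(e, g_n^{-1})$ is negative (recall $d(e, g_n^{-1}) = d(e, g_n) = s_n$). The $h_n$ are $1$-Lipschitz with $h_n(e)=0$, and since $g_n^{-1}\to \infty$, an Arzel\`a--Ascoli extraction (using second countability of $G$) produces a subsequential pointwise limit $h$, which is by definition a horofunction. Setting $H := \{g\in G: h(g)\le -1\}$, every $g\in H$ satisfies $h_n(g) < 0$ eventually, hence $T_g y_n \in V$ eventually, and passage to the limit gives $T_g y_\infty \in \overline V \subset U$, as required.

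The main subtlety is that the limit $y_\infty$ must lie in $O$ itself rather than merely in $\overline O$; this is precisely what forces the intermediate neighborhood $V$ strictly between $\partial O$ and $U$ and the decision to exit $V$ rather than $U$. A secondary care-point is extracting a genuine horofunction from the shifted distances $h_n$: this works exactly because $g_n^{-1}\to\infty$, which is the reason for choosing $g_n$ as a $d(e,\cdot)$-minimizer rather than prescribing its location in advance.
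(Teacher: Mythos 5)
Your proof is correct, and it is more direct than the paper's. Both arguments share the same geometric kernel: near a boundary point of $O$, the minimal-norm group element $g_*$ taking the orbit outside a tubular neighbourhood $V$ of $\partial O$ forces the entire open ball $B_{d(g_*,1_G)}(g_*^{-1})$—which is precisely the sublevel set $\{b_{g_*^{-1}}<0\}$—to stay inside $V$, and such balls limit onto horoballs. The paper packages this as a contradiction: it assumes $d_0:=\inf_{x\in O,\,H\in\cH}\sup_{g\in H}\dist(T_g x,\partial O)>0$, manufactures a finite combinatorial bound $M_\epsilon$, and invokes \cref{lem:geomHoroball} (horoballs are locally tangent to large balls at the identity) to derive a contradiction. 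You argue forward instead: take $x_n\to\partial O$, let $g_n$ be the first-exit elements, pass to a subsequence so that the exit points $y_n=T_{g_n}x_n$ converge to $y_\infty\in O$ and simultaneously $b_{g_n^{-1}}\to h\in\partial G$; the minimality of $g_n$ gives $T_gy_n\in V$ exactly on $\{b_{g_n^{-1}}<0\}$, and closedness of $\overline V$ transfers this to $y_\infty$. This avoids \cref{lem:geomHoroball} and the $M_\epsilon$ bookkeeping entirely, and is arguably the cleaner presentation of this theorem in isolation; the paper's contradiction-plus-$M_\epsilon$ machinery pays off in the directed version (\cref{thm:directed_RC}), where uniform control over a restricted family of horoballs is needed and a naive limiting argument would not track the allowed directions.

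One small correction: your $H:=\{g\in G: h(g)\le -1\}$ is not a horoball in the paper's sense, since $\cH$ consists of the open sublevel sets $\{h<0\}$. The slack is unnecessary: for each fixed $g$ with $h(g)<0$, pointwise convergence already gives $h_n(g)<0$ for all $n$ large (depending on $g$), hence $T_gy_n\in V$ eventually and $T_gy_\infty\in\overline V\subset U$. Replacing your $H$ by the genuine horoball $\{g: h(g)<0\}$ makes the conclusion match the statement exactly, with no change to the argument.
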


 We name this theorem Robinson Crusoe because of the following interpretation. Assume that Robinson Crusoe is isolated on an island and his movement is deterministic, \ie, given by a $\R$-flow. The island defines a $\R$-invariant open set $O$ in the phase space. The theorem claims that there exists a trajectory of Robinson Crusoe that stays on the beach (the border of the island) for all positive (or negative) times. This can be seen as a containment result: $O$ is not repulsed by its border for positive or negative times.    

From \cref{theo1:SchwartzmannVersionAbstraite} we deduce in \cref{theo:factors_general1} the existence of $\epsilon$-asymptotic pairs relatively to a factor map, and from this \cref{thm_1_intro} follows immediately. When $G=\Z^d$ is endowed with the Euclidean metric, we recover the theorem of Boyle and Lind on the existence of nonexpansive half-spaces. 

Pursuing extensions of Schwartzman's result, we also provide a directed version of Robinson Crusoe theorem where the set of horoballs can be restricted to a smaller family, as long as the action satisfies a non-repulsive condition along a semi-group (\cref{thm:directed_RC}). Although this technical condition restricts the actions, the family of systems that satisfy it is rather broad. We deduce in \cref{theo:factors_general} a directed version of asymptotic pairs for infinite groups admitting a proper right-invariant distance such that the inversion map $g\mapsto g^{-1}$ is an isometry. As a consequence, for $G=\Z^d$ we obtain the following new restriction on the nonexpansive half-spaces of a given action (see \cref{cor:half_space_Z2}).

\begin{theo} \label{Thm:intro2}
Let $(X,T,\Z^d)$ be an infinite topological dynamical system. 
Then, the intersection of all its nonexpansive open half-spaces is empty.
\end{theo}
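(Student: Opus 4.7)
The plan is to derive the theorem from the directed asymptotic pairs result (\cref{theo:factors_general}), specialized to $G=\Z^d$ endowed with its Euclidean distance. This is a proper right-invariant metric on $\Z^d$ for which the inversion $n\mapsto -n$ is an isometry, so the hypotheses of the directed theorem are satisfied.

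Fix $v\in\R^d$; the task is to exhibit a nonexpansive open half-space of $\R^d$ that does not contain $v$. The case $v=0$ is immediate, since every open half-space of the form $\{w\in\R^d:\langle w,u\rangle>0\}$ excludes the origin. Assume now that $v\neq 0$, and introduce the additive sub-semi-group
\[
\Gamma_v=\Z^d\cap\{w\in\R^d:\langle w,v\rangle<0\}.
\]
For each $\epsilon>0$, I apply \cref{theo:factors_general} directed by $\Gamma_v$ to produce distinct points $x,y\in X$ and a horoball $H\subseteq\Z^d$ related to $\Gamma_v$, along which $\dist(T_n x,T_n y)\leq\epsilon$.

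The next step is to match this horoball with a Boyle--Lind nonexpansive half-space. For the group $(\Z^d,d_{\mathrm{Euc}})$ the horofunctions are the linear forms $n\mapsto -\langle n,u\rangle$ with $u\in S^{d-1}$, so horoballs are affine half-spaces $\{n:\langle n,u\rangle\geq c\}$ with $c\in\R$. A short geometric calculation shows that such a set can contain the open cone $\Gamma_v$ only when $u=-v/|v|$ and $c\leq 0$; in particular $H\supseteq\Gamma_v$ entirely. Restricting the asymptotic pair $(x,y)$ to $\Gamma_v$, one obtains that the open half-space $\{w\in\R^d:\langle w,v\rangle<0\}$ is nonexpansive in the sense of Boyle--Lind. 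Since $\langle v,v\rangle=|v|^2>0$, this half-space excludes $v$, and hence $v$ lies outside the intersection of all nonexpansive open half-spaces.

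The main obstacle will be making precise the link between the horoball produced by the directed theorem and the semi-group $\Gamma_v$: one must verify that the containment $H\supseteq\Gamma_v$ (or a sufficiently close variant of it) follows automatically from the directed hypothesis, so that the rigidity of linear horofunctions in $\R^d$ pins down the nonexpansive direction as $-v/|v|$. Once this alignment is in place, the elementary sign computation $\langle v,v\rangle>0$ together with the monotonicity of nonexpansivity under shrinking the horoball closes out the argument.
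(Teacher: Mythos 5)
Your high-level reduction is sound: to show the intersection of all nonexpansive open half-spaces is empty, it suffices to exhibit, for each $v\in\R^d$, a nonexpansive half-space that excludes $v$. And choosing to feed the direction of $v$ into the directed theorem (\cref{theo:factors_general}) is the right instinct — this is exactly what the paper's \cref{theo:half_space_nonexpansive} does. However, there are two genuine gaps.

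First, \cref{theo:factors_general} is directed by a \emph{single} element $k\in G$: it only guarantees $k\notin H$. It does not guarantee that $H$ contains the entire cone $\Gamma_v$, and that stronger containment does not ``follow automatically'' — the set of $\ell_2$ horoballs not containing a fixed $k$ corresponds to a closed \emph{hemisphere} of outward normals $\{u\in\S^{d-1}:\langle k,u\rangle\geq 0\}$, not a single direction. Your reasoning collapses this hemisphere to the single normal $v/|v|$ without justification. (Also, a small but real factual error: $\ell_2$ horoballs in $\Z^d$ are open half-spaces \emph{through the origin}, $\{n:\langle n,u\rangle<0\}$; the translate $c$ in your formula $\{n:\langle n,u\rangle\geq c\}$ is always $0$.)

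Second, and more seriously, \cref{theo:factors_general} is stated for a fixed $\epsilon>0$, and the horoball it produces may depend on $\epsilon$. To pass from ``for every $\epsilon>0$ there is an $\epsilon$-nonexpansive horoball not containing $k$'' to a single half-space that is $\epsilon$-nonexpansive for \emph{all} $\epsilon$, you need a compactness/limit argument. This is precisely what the closedness of $\ND(X,\epsilon,\pi)$ and $\ND(X,\pi)$ (the Boyle--Lind style \cref{lem:BLclosdness}) is for, and the paper's proof of \cref{theo:half_space_nonexpansive} invokes it explicitly: one takes a sequence $g_n\to\infty$ with $g_n/\|g_n\|$ approaching the relevant direction (note the direction may be irrational, so approximation by lattice points is unavoidable), applies \cref{theo:factors_general} with $k=g_n$ to get $v_n\in\ND(X,\epsilon,\pi)$ in the corresponding hemisphere, and then uses closedness twice — once in $n$ for fixed $\epsilon$, once in $\epsilon$ — to land in $\ND(X,\pi)$. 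Your proposal omits this step entirely; the ``monotonicity under shrinking the horoball'' you appeal to does not resolve the $\epsilon$-dependence, since for the $\ell_2$ metric the horoballs are all half-spaces through the origin and do not nest.
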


Whereas \cref{thm_1_intro}  states that there exists at least  one nonexpansive half-space for $\Z^d$-actions, \cref{Thm:intro2} establishes that the half-space cannot be unique. In particular, when $d=1$, these recover Schwartzman's result. Thus, Theorems \ref{thm_1_intro} and \ref{Thm:intro2} constitute the first known common generalization of Schwartzman's and Boyle and Lind's results.

Finally, we apply  Robinson Crusoe theorem  to Cantor dynamics to get restrictions on  distal systems and distal factors for a class of groups containing finitely generated groups: they have to be equicontinuous (\cref{cor:factor_distals_are_equicontinuous}). This recovers results in \cite{AGW} and in \cite{MS} for subshifts.

\subsection*{Organization of the paper} In \cref{sec:horoball} we introduce the basic tools we need from geometric group theory.  \cref{sec:Robinson_Crusoe} is devoted to proving the main results of the paper: the { Robinson Crusoe theorem} (\cref{theo:SchwartzmannVersionAbstraite}) and the { Directed Robinson Crusoe theorem} (\cref{thm:directed_RC}). Then, in \cref{sec:asymptotics} we derive applications of these theorems to the notions of nonexpansiveness in topological dynamics. In \cref{sec:closedness} we focus on $\Z^d$-actions, and in particular we show how to obtain \cref{Thm:intro2} (\cref{cor:half_space_Z2}). In this section we also discuss the dependency of our main theorems on the metric chosen in $\Z^d$ and show interesting phenomena. The last section, \cref{sec:Cantor_dynamics}, is devoted to applications in Cantor dynamics that are of independent interest. 

\section*{Acknowledgement}
We are grateful to Benjamin Weiss and Xiangdong Ye for pointing out an error in an earlier version of this paper, namely that \cref{cor:NoDistalCantorAction} does not hold for any countable group. 

\section{Preliminaries}\label{sec:horoball}
\subsection{Basics on groups geometry}
In this section, we recall some basic facts on horoballs on groups.  
Given a group $G$ we denote by $1_G$ its neutral element. In what follows we will assume that the group $G$ is topological, second countable and locally compact. Equivalently  (see \cite{Struble}), it admits a 
distance $d \colon G \times G \to \R$ which is 
\begin{itemize}
\item right-invariant, \ie, $d(gf, hf) = d(g,h)$ for all $g,h,f \in G$;
\item and proper, \ie, every closed ball is compact. 
\end{itemize}
We remark that even if we will work with a distance, all the results presented in this paper could be extended to any continuous right-invariant and proper semi-distance. 

Any countable group satisfies these hypotheses for an integer valued  distance. For instance, when $G$ is a group generated by a finite set $\mathcal S$, a proper right-invariant distance is given by  the $\ell_1$ distance: $d_{\ell_1}(g,h) = \| gh^{-1} \|_1  = \inf \{n\in \N: gh^{-1} = s_{1}\cdots s_{n} \text{ for } s_{1},\ldots,s_n \in {\mathcal S} \cup {\mathcal S}^{-1} \}$. In the general case ({e.g.}, when  $G$ is not finitely generated), the classical Higman-Neumann-Neumann Theorem ensures that $G$ is a subgroup of a finitely generated group so that the former distance induces a proper right-invariant distance on $G$. A simple and constructive way is the following. Consider a sequence of  finite subsets $(B_n)_{n \ge 0}$ of $G$ such that \begin{itemize}
    \item $B_0 = \{1_G \}$;
    \item each $B_n$ is symmetric, \ie, $B_n = B_n^{-1}$;
    \item $B_n B_m \subseteq B_{n+m}$ for any $n,m\ge 0$;
    \item $G = \bigcup_{n\ge 0} B_n$.
\end{itemize}
Then, $\tilde{d} (g,h) = \inf \{ n \in \N: gh^{-1} \in B_n  \}$ defines a proper right-invariant distance and each $B_n$ is the open ball of radius $n$ centered at the neutral element $1_G$ for this distance.

Generalizing the ideas of H. Busemann, M. Gromov defines a compactification of the group $G$ with an embedding  map $b$. Denoting by $C(G)$ the collection of continuous real functions on $G$, this embedding is given by, 
\begin{eqnarray*}
b\colon G & \to & C(G)\\
g & \mapsto& b_{g} \colon x \mapsto d(g,x) -d(g,1_G). 
\end{eqnarray*}
The triangle inequality implies that all the maps $b_{g}$ are 1-Lipschitz. Moreover, by construction, we have  $b_{g}(1_G) =0$. It follows from  Arzel\`a-Ascoli's theorem and a diagonal argument that $b(G)$ is a relatively  compact set in $C(G)$ for the compact-open topology. It is straightforward to check that the map $b$ is an injection. 
   
The {\em border}  of $G$, denoted by $\partial G$, is the set 
$$ \partial G = \overline{b(G)} \setminus b(G),$$
where the closure is considered for the compact-open topology (i.e., here it corresponds to the uniform convergence on compact sets, which is  first countable). This set is not empty if $G$ is unbounded. 
Any function   $j \in \partial G$ is called an {\em horofunction} (or a {\em Busemann cocycle}). Moreover, there exists a sequence $(g_{n})_{n\in \N} \subseteq G$ going to infinity \footnote{This means that for any $m$, $g_n \notin B_{m}(1_G)$ for all large enough $n$, i.e., $g_n$ eventually leaves any ball.}  such that $\displaystyle j = \lim_{n\to\infty} b_{g_{n}}$. 
We call {\em horoball} any subset of $G$ of the form
$$H= \{x \in G: \ j(x)< 0\} \textrm{ for some element } j \in \partial G.$$
We denote by $\mathcal H$ the collection of all horoballs.  Notice that each horoball can be seen as the limit of sets of the form $\{ g \in G : b_{g_n}(g) <0 \}$, \ie, open balls centered at $g_n$ with radius $d(g_n, 1_G)$. 
\begin{figure}[t]
\begin{tikzpicture}
\draw [gray!20, fill=gray!20] (0,-2) rectangle (4,2);
\draw (0.4,0) circle (0.4) ;
\draw (0.925,0) circle (0.925) ;
\draw (2,0) circle (2) ;
\draw (0.7,2) arc (140:224:3);
\draw [very thick] (0,-2) -- (0,2);
\draw  (0,0) node {$\times$};
\draw  (0,0) node[below left] {$O$};
\draw  (0.4,0) node {$\times$};
\draw  (0.4,0) node[below  ] {$g_1$};
\draw  (0.925,0) node {$\times$};
\draw  (0.925,0) node[below right] {$g_2$};
\draw  (2,0) node {$\times$};
\draw  (2,0) node[below right] {$g_3$};
\end{tikzpicture}
\caption{A $\ell_2$ horoball (in gray) for the group $\Z^2$ obtained as a limit of balls.}\label{fig:L2horoball}
\end{figure}
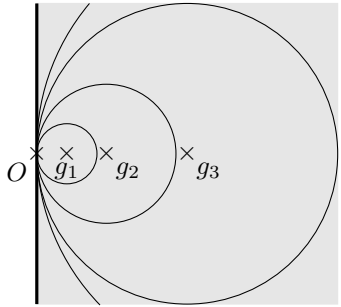

The notion of horoball depends on the distance $d$. 
For instance, for the $\ell_2$ norm on $\Z^d$, a standard computation shows that any horofunction  which is the limit of functions $b_{g_n}$, with $g_n$ going to infinity, is of the form $\langle v , \cdot \rangle$ for some vector $v \in \R^d$ that is the accumulation point of the vectors $g_n/\|g_n\|_2$. Conversely, any unit vector $v$ can be obtained in this way. Hence, a horoball is an open half-space delimited by a hyperplane, as illustrated in \cref{fig:L2horoball}.
In the sequel we will identify the $\ell_2$ horoball $\{ x \in \Z^d: \langle x,v \rangle <0\}$ with its outgoing normal  vector $v$. The set of $\ell_2$ horoballs is then naturally parameterized by the unit sphere $\S^{d-1} \subseteq \R^d$.

For the $\ell_1$ norm on $\Z^d$ the set of horoballs looks really different since it is  countable. It can be checked by considering the horoballs as limits of $\ell_1$ balls with centers going to infinity and  by analyzing the ultimate position of the origin on these $\ell_1$ balls. 
For instance, in $\Z^2$, the horoballs are half-spaces delimited by the diagonal or the anti-diagonal lines, and quarter spaces delimited by integer translations of such half-spaces (see \cref{fig:L1horoball}).
A similar phenomenon occurs for the $\ell_\infty$ norm. However, notice that any horoball for a norm $\ell_p$, $p \in [ 1 , +\infty ]$, is included in a $\ell_2$ horoball.

\begin{figure}
\begin{tikzpicture}
\fill[fill=gray!20] (0,0) -- (2,2) -- (3.5,2) -- (3.5,-2) -- (2,-2) -- cycle;
\draw (0,0) -- (1,1);
\draw (1,1) -- (2,0);
\draw (0,0) -- (1,-1);
\draw (1,-1) -- (2,0);
\draw (0,0) -- (0.5,0.5);
\draw (0.5,0.5) -- (1,0);
\draw (0,0) -- (0.5,-0.5);
\draw (0.5,-0.5) -- (1,0);
\draw (0,0) -- (1.5,1.5);
\draw (1.5,1.5) -- (3,0);
\draw (0,0) -- (1.5,-1.5);
\draw (1.5,-1.5) -- (3,0);
\draw[very thick] (0,0) -- (2,2);
\draw [very thick](0,0) -- (2,-2);

\draw  (0.75,0.75) node {$\times$};
\draw  (0.75,0.75) node[above left] {$O$};
\end{tikzpicture}
\caption{A $\ell_1$ horoball (in gray) for the group $\Z^2$ obtained as limit of balls.}\label{fig:L1horoball}
\end{figure}
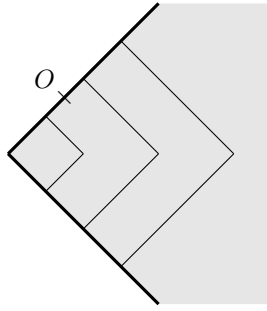

Unlike the euclidean case, the spaces of horoballs are all homeomorphic  when considering hyperbolic distances on a fixed hyperbolic group \cite{Ghys}. 

 In the following, $B_{R}(g)=\{h \in G : d(h,g) <R \}$ denotes the open ball of radius $R>0$ centered at $g \in G$ for the metric $d$ and  its closure, which is compact by assumption,  is denoted by $\overline{B_{R}(g)}$.  It is worth noting that  $B_{R}(g) = B_R(1_G)g $, since the distance is right-invariant. In addition, any  ball  at the origin is symmetric, that is,  $B_{R}(1_G)^{-1} = B_R(1_G)$.
 
Depending on the chosen distance on the group $G$, the horoball could be large, \ie,  contains arbitrary large balls,  or could be even empty. For instance, any horoball of $\Z^d$ or  $\R^d$ for the $\ell_p$ norms, $p\in [1 , + \infty]$, are large. We provide here a criterion that assures to have  large horoballs. This is a strengthening of \cite[Proposition 1.5]{AGW}.  This  applies particularly to horoballs associated with the $\ell_1$ distance of a finitely generated group. 
\begin{prop}
\label{lem:BigHoroball}
    Let $G$ be an infinite group with a proper right-invariant distance. If  a horofunction $j$ satisfies $ \inf_{g \in G} j(g) = - \infty$, then its associated  horoball  contains arbitrarily large balls.  
    
As a consequence, any horoball for the $\ell_1$ distance of a finitely generated group contains arbitrarily large balls.
\end{prop}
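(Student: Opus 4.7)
My plan is to handle the two claims in sequence, with the heart of the argument being a short Lipschitz estimate for the first and a diagonal extraction along a geodesic for the second.

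For the main statement, the only property of $j$ I exploit is that it is $1$-Lipschitz with respect to $d$: each $b_{g_n}$ is $1$-Lipschitz because it differs from $d(g_n,\cdot)$ by a constant, and pointwise limits of $1$-Lipschitz functions are $1$-Lipschitz. Given $R>0$, the hypothesis $\inf_{g}j(g)=-\infty$ furnishes $h\in G$ with $j(h)<-R$; for every $x\in B_R(h)$ the Lipschitz bound yields $j(x)\le j(h)+d(x,h)<-R+R=0$, so $B_R(h)$ is contained in the horoball. This disposes of the first assertion in essentially one line.

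For the consequence, I reduce to showing that every horofunction $j$ for the $\ell_1$ distance of a finitely generated group $G$ satisfies $\inf j=-\infty$. Write $j=\lim_n b_{g_n}$ with $\|g_n\|_1\to\infty$ and fix a finite symmetric generating set $\mathcal S$. Since the distance is right-invariant, the neighbors of $g$ in the Cayley graph are the points $sg$ with $s\in\mathcal S$, so a geodesic from $1_G$ to $g_n$ has the form $1_G=v_0^{(n)},v_1^{(n)},\ldots,v_{N_n}^{(n)}=g_n$ with $v_{k+1}^{(n)}=s_k^{(n)}v_k^{(n)}$ for some $s_k^{(n)}\in\mathcal S$ and $N_n=\|g_n\|_1$; additivity of length along a geodesic gives $b_{g_n}(v_k^{(n)})=(N_n-k)-N_n=-k$. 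A diagonal extraction on the finite set $\mathcal S$ produces a subsequence along which each $s_k^{(n)}$ stabilizes to some $s_k\in\mathcal S$; setting $v_k:=s_{k-1}\cdots s_0$, the value $b_{g_n}(v_k)$ is eventually $-k$ along this subsequence, so $j(v_k)=-k$. Letting $k\to\infty$ gives $\inf j=-\infty$, and the first part yields a horoball containing arbitrarily large balls.

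The point requiring care, and what I view as the principal obstacle, is identifying which elements play the role of geodesic prefixes in a right-invariant metric: the length-$k$ vertex on a geodesic to $g_n$ has the form $s_{k-1}^{(n)}\cdots s_0^{(n)}$ (obtained by successive left-multiplications), rather than an initial segment of a single fixed word. Once this convention is in place, the pigeonhole/diagonal step and the identification $j(v_k)=-k$ become routine, and the only group-theoretic input used beyond the right-invariant word metric is the finiteness of $\mathcal S$.
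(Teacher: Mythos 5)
Your proof is correct and follows essentially the same strategy as the paper: for the first claim you use a Lipschitz estimate to place a ball inside a sublevel set (the paper routes this through the approximating functions $b_{g_n}$ with slightly larger buffer constants, while you invoke the $1$-Lipschitz property of $j$ directly, which is a modest streamlining), and for the second claim you perform the same diagonal extraction along geodesic prefixes to show $\inf j = -\infty$. The only cosmetic difference is that the paper bounds $b_{g_n}(s_k\cdots s_1)\le -k$ by a word-length estimate, whereas you obtain equality from geodesic additivity; both yield the needed conclusion.
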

\begin{proof}
Let $R>0$ and  $(g_n)_{n\in\N}\subseteq G$ be a sequence such that $b_{g_n}$ converges to $j$. By hypothesis, we can take $g \in G$ such that $j(g) < -4R$. Then,  for any large enough $n \in \N$, $b_{g_n}(g)  \le -2R$. Now, for any $h \in B_{R}(g)$, the triangular inequality gives
$$b_{g_n}(h)= d(h,g_n) - d(g_n, 1_G) \le d(h,g)+ d(g_n,g) - d(g_n, 1_G)\le -R, $$
where the last inequality is true for any large enough $n$. This shows that the ball $B_{R}(g)$ is contained in the horoball $\{ j <0\}$.

To prove the second statement of the lemma, assume that  $G$ is a group generated  by a finite set $\cS$ and  consider its $\ell_1$ distance. Observe that for any sequence $(g_n)_{n\in \N} \subseteq G$ leaving any compact, the element $g_n$ is given by $s_{N_n}^{(n)}s_{N_n-1}^{(n)} \ldots s_{1}^{(n)}$ with  $s_j^{(n)} \in \cS \cup \cS^{-1}$, where $N_n$ is the smallest such integer, \ie,  $N_n = d_{\ell_1}(g_n, 1_G)$. 
For any integer $k>0$, up to taking a subsequence,  the term   $s_k^{(n)}s_{k-1}^{(n)} \ldots s_1^{(n)}$ is constant in $n$. It is then denoted $s_k \ldots s_1$. For any large enough $n$, the very definition of the $\ell_1$ distance provides
$$b_{g_n}(s_k \ldots s_1)  = d_{\ell_1}(g_n , s_k \ldots s_1) - N_n =  d_{\ell_1}(g_n s_1^{-1} \ldots s_k^{-1}, 1_G) -N_n \le -k.$$
Since $k$ is arbitrary,  this shows that any horofunction $j$ that is an accumulation point of $b_{g_n}$ satisfies $\inf_{g\in G} j(g) = -\infty$.
\end{proof}

On the contrary, some distances may define horofunctions with empty horoballs. For instance, consider the free abelian group infinitely generated  by $(e_i)_{i\in \N}$ (i.e., $\bigoplus_{i\in \N}\Z$), equipped with the norm  $\| \sum_{i\in \N} x_i e_i\|= \sum_{i\in \N} |x_i|i $ with $x_i \in \Z$. Then, any horofunction that is an accumulation point of the functions $b_{e_i}$ has a trivial horoball. More dramatically, for the locally finite group  $\bigoplus_{i\in \N}\Z/2\Z$, the same remark provides that any horoball is empty. 

Throughout the proofs, the most interesting case will be when the group $G$ does not have empty horoballs. A compactness argument provides the following finitary characterization of this case.

\begin{lem}\label{lem:NonemptyHoroball}
Let $G$ be a group with a proper right-invariant distance that does not admit empty horoballs. Then, there exists an integer $N>0$ such that 
\begin{align*}
 H \cap B_{N}(1_{G}) \neq \emptyset, \quad \forall H \in \cH.
 \end{align*} 
\end{lem}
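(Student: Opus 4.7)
The plan is to argue by contradiction using the compactness of $\overline{b(G)}$ in the compact-open topology, combined with a diagonal extraction: I will suppose the conclusion fails and construct a horofunction whose horoball is empty, contradicting the hypothesis.

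Concretely, under the contradiction hypothesis, for every $N \ge 1$ there is a horofunction $j_N \in \partial G$ with $\{j_N < 0\} \cap B_N(1_G) = \emptyset$, i.e., $j_N \ge 0$ on $B_N(1_G)$. Since each $j_N$ is a uniform-on-compacta limit of functions $b_{g_{N,m}}$ with $g_{N,m} \to \infty$, a diagonal choice yields $h_N := g_{N, m(N)}$ with $d(h_N,1_G) > N$ and $|b_{h_N}(g) - j_N(g)| < 1/N$ for all $g \in \overline{B_N(1_G)}$; hence $b_{h_N}(g) > -1/N$ on $B_N(1_G)$ and $h_N \to \infty$ in $G$. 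The functions $b_g$ are all $1$-Lipschitz and vanish at $1_G$, so $\overline{b(G)}$ is compact by Arzelà–Ascoli; I extract a subsequence $b_{h_{N_k}} \to j$ in $\overline{b(G)}$. For any fixed $g \in G$, as soon as $N_k > d(g, 1_G)$, one has $b_{h_{N_k}}(g) > -1/N_k$, so $j(g) \ge 0$ in the limit. Thus the sublevel set $\{j < 0\}$ is empty.

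The step I expect to be the main obstacle is certifying that the limit $j$ really lies in $\partial G$ (and not in $b(G)$), so that $\{j < 0\}$ qualifies as a horoball to which the no-empty-horoball hypothesis applies. If instead $j = b_h$ for some $h \in G$, the positivity $j(h) \ge 0$ together with the identity $b_h(h) = -d(h,1_G)$ forces $h = 1_G$ and hence $j = b_{1_G}$. I would rule this out by noting that $b_{h_{N_k}} \to b_{1_G}$ with $h_{N_k} \to \infty$ would require $d(g, h_{N_k}) - d(h_{N_k}, 1_G) \to d(g, 1_G)$ for every $g$, i.e., equality in the triangle inequality $d(g, h_{N_k}) \le d(g, 1_G) + d(1_G, h_{N_k})$ asymptotically for all $g$ simultaneously — and the same statement applied to $g$ and to a suitable second point (e.g., using right-invariance to translate to $d(h_{N_k}, 1_G) + d(1_G, g) \approx d(h_{N_k}, 1_G) - d(1_G, g) + o(1)$ up to signs) forces $g = 1_G$. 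Equivalently, this is the standard properness of the horofunction embedding $b : G \to \overline{b(G)}$ for a proper metric space, which guarantees that every accumulation point of $(b_{h_{N_k}})$ with $h_{N_k} \to \infty$ lies in $\partial G$. Once this is in hand, $j$ is a horofunction whose horoball is empty, contradicting the hypothesis, and the proof is complete.
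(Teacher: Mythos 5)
Your overall strategy is the same as the paper's: negate the conclusion, use compactness of $\overline{b(G)}$ (Arzel\`a--Ascoli) to extract a limit, and observe that the limit function is nonnegative everywhere, hence has empty sublevel set. The paper is more direct --- it takes an accumulation point $j$ of the horofunctions $j_N$ themselves and asserts $j\in\partial G$ --- whereas you first replace each $j_N$ by a nearby $b_{h_N}$ with $h_N\to\infty$ via a diagonal extraction and then pass to the limit. Everything up to and including ``$j\ge 0$ on $G$, hence $\{j<0\}=\emptyset$'' is correct.

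The step you flag as the main obstacle is indeed the delicate one, but your resolution of it is wrong. For a general proper right-invariant metric it is \emph{not} true that $h_n\to\infty$ forces every accumulation point of $(b_{h_n})_n$ into $\overline{b(G)}\setminus b(G)$: in the paper's own example $G=\bigoplus_{i\ge1}\Z/2\Z$ with $\|\sum_i x_ie_i\|=\sum_i i|x_i|$ one has $b_{e_n}\to b_{1_G}$ uniformly on compacta (on any finite set $F$, $b_{e_n}\equiv\|\cdot\|$ once $n$ exceeds every index occurring in the supports of elements of $F$), although $e_n\to\infty$. So asymptotic equality in the triangle inequality can hold for every fixed $g$ simultaneously; it does not ``force $g=1_G$'', and the properness of the embedding you invoke is standard for proper \emph{geodesic} spaces, not for arbitrary proper metrics on groups. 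The honest repair is a matter of which definition of horofunction is in force. If, as the paper's usage suggests (cf.\ \cref{lem:BigHoroball} and the surrounding discussion), a horofunction is by definition a limit of $b_{g_n}$ with $g_n\to\infty$, then your diagonal construction already certifies that $j$ is a horofunction and your last paragraph is unnecessary; this set of limits equals $\bigcap_n\overline{\{b_g:g\notin B_n(1_G)\}}$ and is therefore closed, which is also the fact the paper's shorter proof silently uses when it assumes the accumulation point of the $j_N$ is again a horofunction. If instead one insists on the literal definition $\partial G=\overline{b(G)}\setminus b(G)$, the same example shows the statement itself fails: the horofunctions $\lim_{m} b_{e_N+e_{N+1}+\cdots+e_{N+m}}$ lie in $\partial G$, have nonempty horoballs (they contain $e_N$), yet miss $B_N(1_G)$, so no argument can close the gap. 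In short, replace your final paragraph by an appeal to the ``limit along a sequence going to infinity'' characterization rather than to properness of the embedding.
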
 
\begin{proof}
For the sake of a contradiction, assume that there is a sequence of horofunctions $ (j_n)_{n\ge 1}$ such that each horoball $\{x: j_n(x) < 0\} $ is disjoint from the ball  $B_{n}(1_{G})$. Let $ j\in  \partial G$  be an accumulation point of  the sequence $(j_n)_{n\geq 1}$.  The  horoball $\{x: j(x) < 0\} $ is then  disjoint  from any ball centered at $1_G$, hence is empty. This contradicts the hypothesis. 
\end{proof}

Our applications in Section \ref{sec:closedness} will mainly focus on locally compact, second countable groups that are subgroups of a Banach space $E$ (hence are abelian). In this case, it is known that any horofunction is bounded from below by a linear functional $L$ of norm at most $1$ \cite[Lemma 3.1]{GouezelKarlsson}. Hence, any horoball is included  in a half-space of the form $L<0$. In this sense, the half-spaces are the largest possible horoballs for $G$. 
For a general group, this geometrical property is still valid but only locally: any horoball is locally tangent to a ball, in the identity element. See  \cref{lem:geomHoroball2} and its illustration  in \cref{fig:geomHoroball2}. 

Also remark that for a countable group $G$, the ball $B_\epsilon (1_G)$ is trivial, \ie, reduced to the neutral element $1_G$, for any  small enough $\epsilon$. So, the reader interested only in countable groups may simplify the formulas in all the following statements. 

\begin{lem}\label{lem:geomHoroball}
Let $G$ be an infinite group  with a proper right-invariant distance $d$. Then, for any  $M>0$  and $\epsilon>0$  there exists an integer $n_{0} \in \N$ such that for  $g \in G$  with $d(g,1_G) \ge n_{0}$ 
one can find a horoball $H\in\cH$ such that
$$ \left[\overline{H} \cap \overline{B_{M}(1_G)}  \right] g \subseteq B_\epsilon (1_G) B_{d(g,1_G)}(1_G).$$   
\end{lem}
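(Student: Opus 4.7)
My plan is to choose, for each $g$ with $R := d(g,1_G)$ sufficiently large, a horoball $H$ that approximates on $\overline{B_M(1_G)}$ the open ball $B_R(g^{-1})$, then transport the inclusion by right‑multiplication by $g$. Since any ball at the origin is symmetric, $d(g^{-1},1_G) = R$, so the $1$-Lipschitz function $b_{g^{-1}}(x) = d(x,g^{-1}) - R$ satisfies $b_{g^{-1}}(1_G) = 0$ and has $\{b_{g^{-1}} < 0\} = B_R(g^{-1})$. Two consequences of right-invariance will do most of the work. First, $B_R(g^{-1}) \cdot g = B_R(1_G)$, and right-multiplication by $g$ is an isometry, hence preserves $\epsilon$-neighborhoods. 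Second, one checks directly that $B_\epsilon(1_G)\,B_R(1_G) = \{z \in G : d(z, B_R(1_G)) < \epsilon\}$ is exactly the open $\epsilon$-neighborhood of $B_R(1_G)$. Combining these, the lemma reduces to producing a horoball $H$ with $\overline{H} \cap \overline{B_M(1_G)}$ contained in the open $\epsilon$-neighborhood of $B_R(g^{-1})$.

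To produce $H$, I use a compactness approximation. Because $\overline{b(G)}$ is compact in the topology of uniform convergence on compact subsets and any subsequential limit of $b_{f_n}$ with $d(f_n,1_G) \to \infty$ lies in $\partial G$, a contradiction argument yields the following uniform statement: for every $\delta > 0$ there is $n_0 = n_0(M,\delta)$ such that any $f \in G$ with $d(f,1_G) \ge n_0$ admits a horofunction $j \in \partial G$ with $|j(x) - b_f(x)| < \delta$ for all $x \in \overline{B_M(1_G)}$. Apply this to $f = g^{-1}$ (so that the hypothesis becomes $d(g,1_G) \ge n_0$), with $\delta$ to be chosen small relative to $\epsilon$, and set $H := \{j < 0\}$. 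For $h \in \overline{H} \cap \overline{B_M(1_G)}$ one has $j(h) \le 0$, so $b_{g^{-1}}(h) < \delta$, i.e.\ $d(h,g^{-1}) < R + \delta$. To conclude, one finds $h' \in G$ with $d(h',g^{-1}) < R$ and $d(h,h') < \epsilon$; then $k := h'g \in B_R(1_G)$ and $d(hg,k) = d(h,h') < \epsilon$ by right-invariance, so $hg \in B_\epsilon(1_G)\,B_R(1_G)$.

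The main obstacle I anticipate is the very last step, namely producing such an $h'$: one must upgrade the scalar bound $d(h,g^{-1}) < R + \delta$ to genuine metric proximity to the open ball $B_R(g^{-1})$. In length spaces and in abelian groups with their usual norms this is automatic by a small retraction toward $g^{-1}$. In the discrete integer-valued case the approximation in the first step in fact becomes exact, because horofunctions of an integer-valued metric are themselves integer-valued and $\overline{H} = H$ in the discrete topology, so $j(h) \le -1$ forces $d(h,g^{-1}) \le R - 1$ outright, and one can take $h' = h$. For a general proper right-invariant metric one must calibrate $\delta$ carefully against the local geometry of the metric near the boundary of $B_R(g^{-1})$; the geometric picture throughout is the same one suggested by the second part of the paper's discussion, namely that the chosen horoball is tangent at $1_G$ to the large ball $B_R(g^{-1})$ passing through $1_G$.
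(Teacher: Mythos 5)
Your reformulation in the first half is sound: by right-invariance, $B_R(g^{-1})\cdot g = B_R(1_G)$ and $B_\epsilon(1_G)B_R(1_G)$ is exactly the open $\epsilon$-neighborhood of $B_R(1_G)$, so the lemma does reduce to showing that some horoball $H$ has $\overline{H}\cap\overline{B_M(1_G)}$ inside the $\epsilon$-neighborhood of $B_R(g^{-1})$. Your Step~1 (a compactness argument giving, for each $\delta>0$, an $n_0(M,\delta)$ such that every $f$ with $d(f,1_G)\ge n_0$ admits a horofunction $j$ with $\sup_{\overline{B_M(1_G)}}|j-b_f|<\delta$) is also correct.

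The gap you flag at the end, however, is real and not merely a technicality that can be "calibrated away." From $j(h)\le 0$ and $|j-b_{g^{-1}}|<\delta$ you only get $d(h,g^{-1})<R+\delta$, and there is no general mechanism to convert this into $d(h,B_R(g^{-1}))<\epsilon$ for a fixed $\delta$: the $\delta$-approximation controls values of $b_{g^{-1}}$ on the bounded set $\overline{B_M(1_G)}$, but says nothing about how the balls $B_R(g^{-1})$ fill out space near their boundary. The natural attempt to repair this --- approximate $h$ by $z\in H$ and use $b_{g^{-1}}(z)<j(z)+\delta$ --- fails, because the $1$-Lipschitz bound forces $j(z)>-\epsilon$ for all $z$ within $\epsilon$ of $h\in\overline{H}$, so the inequality $j(z)<-\delta$ needed to conclude $b_{g^{-1}}(z)<0$ is unavailable when $\epsilon\le\delta$. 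Even in the benign case of $\Z^d$ with the $\ell_2$ metric and $\epsilon<1$ your retraction argument lands on a point of $\R^d$ that need not be a lattice point, and the nearest lattice point may fall outside $B_R(g^{-1})$; the lemma is still true there, but only because one is allowed to pick the horoball $H$ cleverly, not because the naive tangent horoball works.

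The paper's proof resolves this by working with a sequence rather than a single $g$. Negating the lemma produces $g_n\to\infty$ such that no horoball works for any $g_n$; one then sets $j=\lim b_{g_n^{-1}}$, $H=\{j<0\}$, and extracts (by compactness of $\overline{H}\cap\overline{B_M(1_G)}$) a point $x$ that stays $\epsilon$-far from $B_{d(g_n,1_G)}(g_n^{-1})$ along a subsequence. One then approximates $x$ by $z$ in the \emph{open} $H$ with $j(z)<0$ strictly. The crucial step is that $b_{g_n^{-1}}(z)\to j(z)<0$ \emph{pointwise} along the sequence, so $z$ eventually lies in $B_{d(g_n,1_G)}(g_n^{-1})$ --- contradicting the $\epsilon$-separation of $x$. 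This pointwise convergence along a sequence is the ingredient with no counterpart in your single-$g$ argument, and is exactly what closes the gap you identified. Your proposal is a genuinely different strategy that cannot be completed without importing some extra geometric hypothesis on the metric; the contradiction-by-sequence route is the one that works in the generality claimed.
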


\cref{lem:geomHoroball} is a particular case of \cref{lem:geomHoroball2} below, where we do not need to consider all the horoballs but only those that are defined by taking the limit of the points outside a specific subset of $G$. We introduce the following notions and notations.  For an unbounded set  $G_0 \subseteq G$, we denote by $\partial G_0$ the set of horofunctions $\displaystyle j=\lim_{n\to\infty} b_{g_n} \in \partial G$, where $(g_n)_{n\in \N}$ is a sequence in $G_0$ going to infinity.  
Similarly, we denote by $\cH_{\partial G_0}$ the set of horoballs $\{x\in G : j(x)<0\}$ for $j \in  \partial G_0$. Note that \cref{lem:geomHoroball} follows from \cref{lem:geomHoroball2} by taking $A =G$ and $\bar{g} =1_G$.

\begin{lem}\label{lem:geomHoroball2}
Let $G$ be an infinite group with a proper right-invariant distance $d$ and $A \subseteq G$ be an unbounded subset. Then, for any  $M>0$  and $\epsilon>0$  there exists an integer $n_{0} \in \N$ such that for any $\bar{g}\in G$ and $g \in A\bar{g}$  with $d(g,\bar{g}) \ge n_{0}$ one can find a horoball $H\in\cH_{\partial A^{-1}}$ such that
$$ \left[\overline{H} \cap \overline{B_{M}(1_G)}  \right] g \subseteq B_\epsilon (1_G) B_{d(g,\bar{g})}(\bar{g}).$$   
\end{lem}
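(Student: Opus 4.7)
The plan is a compactness argument by contradiction.

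Suppose the lemma fails: there exist $M,\epsilon>0$ and sequences $(\bar g_n)\subseteq G$, $(a_n)\subseteq A$ with $d(a_n,1_G)\to\infty$ such that, setting $g_n=a_n\bar g_n$, no horoball $H\in\cH_{\partial A^{-1}}$ satisfies the required inclusion for $g=g_n$, $\bar g=\bar g_n$. The first step is to manufacture a single candidate horoball tailored to this sequence. The Busemann cocycles $b_{a_n^{-1}}$ form a family of $1$-Lipschitz functions vanishing at $1_G$, so Arzel\`a--Ascoli and a diagonal extraction provide a subsequence (keep the same notation) along which $b_{a_n^{-1}}$ converges uniformly on compact sets to some $j\in\partial A^{-1}$. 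Set $H=\{j<0\}\in\cH_{\partial A^{-1}}$.

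By the standing assumption applied to this $H$, for each $n$ there is $h_n\in\overline H\cap\overline{B_M(1_G)}$ with $h_n g_n\notin B_\epsilon(1_G)\,B_{d(g_n,\bar g_n)}(\bar g_n)$. Right-invariance gives $B_{d(g_n,\bar g_n)}(\bar g_n)=B_{d(a_n,1_G)}(1_G)\,\bar g_n$ and the identity $B_\epsilon(1_G)\,B_R(1_G)=\{x\in G:\,d(x,B_R(1_G))<\epsilon\}$, so the violation rewrites as
$$d(h_n a_n,\,B_{d(a_n,1_G)}(1_G))\ge\epsilon.$$
Using compactness of $\overline{B_M(1_G)}$, I would extract a further subsequence so that $h_n\to h_\star\in\overline H\cap\overline{B_M(1_G)}$, whence $j(h_\star)\le 0$ by continuity of $j$.

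The contradiction then comes from an approximation inside the open set $H$. Since $h_\star\in\overline H$ and $H$ is open, every neighbourhood of $h_\star$ meets $H$; fix one $h'\in H$ with $d(h',h_\star)<\epsilon/3$, so that $d(h',h_n)<\epsilon/2$ for all $n$ large. Because $h'\in H$ one has $j(h')<0$ strictly, and uniform convergence of $b_{a_n^{-1}}$ to $j$ on the compact set $\{h'\}$ gives $b_{a_n^{-1}}(h')<0$ for $n$ large; unpacking, $d(h' a_n,1_G)<d(a_n,1_G)$, i.e.\ $h' a_n\in B_{d(a_n,1_G)}(1_G)$. Right-invariance finally gives $d(h_n a_n,h' a_n)=d(h_n,h')<\epsilon/2<\epsilon$, contradicting the separation inequality above.

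The main obstacle I expect is precisely the boundary case $h_\star\in\partial H$ with $j(h_\star)=0$: the naive bound $b_{a_n^{-1}}(h_n)\to j(h_\star)\le 0$ only yields $d(h_n a_n,1_G)\le d(a_n,1_G)+o(1)$ and, since the metric need not be geodesic, this does not by itself place $h_n a_n$ within $\epsilon$ of the open ball $B_{d(a_n,1_G)}(1_G)$. The fix sketched above---replace $h_n$ by a slightly shifted $h'\in H$ lying strictly inside the horoball and absorb the perturbation into the factor $B_\epsilon(1_G)$---is exactly what converts the topological closure $\overline H$ into the open $H$ at the cost of the $\epsilon$-wiggle room, and explains why the conclusion is stated with $B_\epsilon(1_G)$ on the right-hand side. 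Turning this into a uniform $n_0=n_0(M,\epsilon)$ independent of $\bar g$, $g$ and the specific horoball requires only a routine finite-cover argument on the compact $\overline H\cap\overline{B_M(1_G)}$ by neighbourhoods of points of $H$, combined with the compactness of the horofunction space.
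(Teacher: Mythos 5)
Your proof is correct and follows essentially the same strategy as the paper's: argue by contradiction, extract a limiting horofunction $j=\lim b_{a_n^{-1}}\in\partial A^{-1}$, observe that the associated horoball fails for every $n$ in the subsequence, and derive the contradiction by perturbing the offending boundary point into the open horoball $H$ and using right-invariance together with pointwise convergence of the Busemann cocycles. The only cosmetic differences are that the paper first reduces to $\bar g=1_G$ and later right-translates, whereas you absorb $\bar g_n$ from the outset; and your closing remark about needing an extra ``finite-cover argument'' is superfluous --- the contradiction you already reached is exactly what establishes the existence of a uniform $n_0(M,\epsilon)$.
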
  
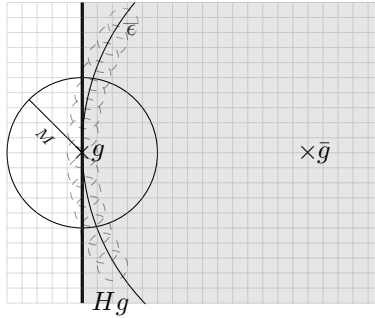
\begin{figure}[h]
\begin{tikzpicture}
\draw [gray!20, fill=gray!20] (0,-2) rectangle (4,2);
\draw (0,-2) node[right] {$Hg$};
\draw (0,0) circle (1) ;
\draw (0,0) -- ++(135:1) node[midway, below, sloped] {\tiny{$M$}};
\draw (0.7,2) arc (140:224:3);
\draw [very thick] (0,-2) -- (0,2);

\draw [very thin, gray, opacity= 0.3] (-1,-2) grid[step=0.2] (4,2);
\draw  (3,0) node {$\times$};
\draw  (3,0) node[right] {$\bar{g}$};
\draw[dashed,opacity= 0.3] (3,0)++(145:3) circle (0.2);  
\draw[opacity= 0.5,very thin] (3,0)++(145:3)-- ++(0:0.2); 
\draw[opacity = 0.8] (0.65, 1.63) node {\small{$\epsilon$}};
\draw[dashed,opacity= 0.4] (3,0)++(150:3) circle (0.2);  
\draw[dashed,opacity= 0.4] (3,0)++(155:3) circle (0.2);
\draw[dashed,opacity= 0.4] (3,0)++(160:3) circle (0.2);
\draw[dashed,opacity= 0.4] (3,0)++(165:3) circle (0.2);
\draw[dashed,opacity= 0.4] (3,0)++(170:3) circle (0.2);
\draw[dashed,opacity= 0.4] (3,0)++(175:3) circle (0.2);
\draw[dashed,opacity= 0.4] (3,0)++(180:3) circle (0.2);
\draw[dashed,opacity= 0.4] (3,0)++(185:3) circle (0.2);
\draw[dashed,opacity= 0.4] (3,0)++(190:3) circle (0.2);
\draw[dashed,opacity= 0.4] (3,0)++(195:3) circle (0.2);
\draw[dashed,opacity= 0.4] (3,0)++(200:3) circle (0.2);
\draw[dashed,opacity= 0.4] (3,0)++(205:3) circle (0.2);
\draw[dashed,opacity= 0.4] (3,0)++(210:3) circle (0.2);
\draw  (0,0) node {$\times$};
\draw  (0,0) node[right] {$g$};
\end{tikzpicture}
\caption{Illustration of \cref{lem:geomHoroball2} for  the $\ell_2$ distance on $G=\Z^2$.}\label{fig:geomHoroball2}
\end{figure}

\begin{proof}
We first show that the result holds for $\bar{g}= 1_G$. By contradiction, suppose that there exist constants $M>0$ and $\epsilon>0$ and a sequence $(g_{n})_{n\in \N}$ in $A$ going to infinity, such that for any horoball $H \in \cH_{\partial A^{-1}}$ we have that
\begin{align}\label{eq:nonEmpty}
\left[ \overline{H} \cap \overline{B_{M}(1_G)} \right ]  \not\subseteq  B_\epsilon (1_G) B_{d(g_n,1_G)}(g_n^{-1}). 
\end{align} 
 Up to taking a subsequence, we can assume that  $(b_{g_{n}^{-1}})_{n\in \N}$ converges to a horofunction $j \in \partial A^{-1}$. By assumption,  the horoball  $H=\{j <0\} \in \cH_{\partial A^{-1}}$ satisfies \eqref{eq:nonEmpty}. 
 By a compactness argument, there exists an element $x\in\overline{H}\cap \overline{B_{M}(1_G)}$ such that for infinitely many $n \in \N$, $d(x, B_{d(g_n,1_G)} (g_n^{-1})) \ge \epsilon$. Take an element $z \in H$ such that $d(x,z)<\epsilon/2$. For any large enough $n \in \N$ we have that $b_{g_n^{-1}}(z)<0$
(\ie, $z\in B_{d(g_n,1_G)} (g_n^{-1})$), but  by the choice of $x$, for infinitely many $n\in \N$ we have that $d(z, B_{d(g_n,1_G)} (g_n^{-1})) \ge \epsilon/2$. This is a contradiction and we therefore obtain the result for $\bar g =1_G$.

To conclude the general case for $\bar{g} \in G$ and $g \in A\bar{g}$, it is enough to use the case $\bar{g}=1_G$ taking $g\bar{g}^{-1}$ in place of $g$. 
\end{proof}

The next geometrical lemma is needed for the directed version of our main result. Roughly speaking, it states that  some translation  of a  ``cone'' $G_0$  has a  larger  intersection  with a  ball around the origin than that of the untranslated cone $G_0$. See \cref{fig:geomHoroball3} for an illustration. 
 
\begin{lem}\label{lem:geomHoroball3}
Let $G$ be an infinite group  with a proper right-invariant distance $d$, $G_0 \subseteq G$ an unbounded subset and $\eta >0$. Then, for any  $g \in G$ such that $j(g^{-1})<-\eta $ for any $j \in \partial G_0$, there exists an integer $n_{1}\in \N$ such that for any real number $r\ge n_1$
$$\left[G_0 \cap B_{r+\eta}(1_G)\right] g  \subseteq  B_{r}(1_G).$$  
\end{lem}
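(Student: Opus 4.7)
The plan is to argue by contradiction and extract a horofunction from a sequence of failures, then derive a bound on its value at $g^{-1}$ that contradicts the hypothesis. The key computation will be translating the ball membership condition into a value of a Busemann function using right-invariance.

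First, I would suppose the conclusion fails, so that for every integer $n\ge 1$ there exist $r_{n}\ge n$ and $h_{n}\in G_{0}\cap B_{r_{n}+\eta}(1_{G})$ with $h_{n}g\notin B_{r_{n}}(1_{G})$, i.e. $d(h_{n},1_{G})<r_{n}+\eta$ and $d(h_{n}g,1_{G})\ge r_{n}$. A quick triangle inequality, $d(h_{n},1_{G})\ge d(h_{n}g,1_{G})-d(g,1_{G})\ge r_{n}-d(g,1_{G})$, shows that $d(h_{n},1_{G})\to\infty$, so $(h_{n})_{n}$ is a sequence in $G_{0}$ going to infinity. By Arzelà--Ascoli and the definition of $\partial G_{0}$, after passing to a subsequence the Busemann functions $b_{h_{n}}$ converge in the compact-open topology to some horofunction $j\in\partial G_{0}$.

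Next, I would evaluate $b_{h_{n}}$ at $g^{-1}$ using the right-invariance of $d$:
\[
b_{h_{n}}(g^{-1})=d(g^{-1},h_{n})-d(h_{n},1_{G})=d(1_{G},h_{n}g)-d(h_{n},1_{G}).
\]
Plugging in the bounds from our contradicting sequence gives
\[
b_{h_{n}}(g^{-1})>r_{n}-(r_{n}+\eta)=-\eta.
\]
Passing to the pointwise limit at $g^{-1}$, the strict inequality becomes weak and yields $j(g^{-1})\ge -\eta$, contradicting the hypothesis $j(g^{-1})<-\eta$ for every $j\in\partial G_{0}$. Hence some integer $n_{1}$ as required must exist.

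The step most susceptible to a slip is the right-invariance identity $d(g^{-1},h_{n})=d(1_{G},h_{n}g)$ used to reformulate $b_{h_{n}}(g^{-1})$; once this is correctly set up, the contradiction is immediate from the strict versus weak inequality boundary. No further compactness beyond the standard Arzelà--Ascoli extraction used throughout the section is needed, so the argument parallels, and is slightly simpler than, the proof of \cref{lem:geomHoroball2}.
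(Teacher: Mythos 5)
Your proof is correct and follows essentially the same contradiction argument as the paper: negate the conclusion, extract a sequence in $G_0$ going to infinity, and observe that the resulting horofunction $j$ satisfies $j(g^{-1})\ge-\eta$, contradicting the hypothesis. The right-invariance computation $b_{h_n}(g^{-1})=d(h_ng,1_G)-d(h_n,1_G)$ is exactly the one the paper uses, and your explicit verification that $d(h_n,1_G)\to\infty$ is a small detail the paper leaves implicit.
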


\begin{proof}
The proof is by contradiction. If the result does not hold, there exist $g\in G$ with $j(g^{-1})<-\eta$ for all $j\in \partial G_0$ and sequences $(g_n)_{n\in \N}$ in $G_0$ and $(r_n)_{n\in \N}$ in $\R$ going to infinity such that $d(g_n,1_G)<r_n+\eta$ and $d(g_ng, 1_G) \ge r_n$ for all $n\in \N$. This means that $b_{g_n}(g^{-1}) \ge - \eta$. Since the elements $g_n$ go to infinity, any accumulation point $j \in \partial G_0$ satisfies $j(g^{-1})\ge -\eta$. This is a contradiction.    
\end{proof}

\begin{figure}[h]
\begin{tikzpicture}
\draw (0,0) circle (2) ;
\draw (0,0) -- ++(0:2) node[midway, above] {$r$};

\draw  (0,0) node {$\times$};
\draw  (0,0) node[below left] {$0$};
\draw [very thick] (0,0) -- (2,2);
\draw [very thick] (0,0) -- (2,-2);
\draw (2.5,-2) node {$G_0$};
\fill[gray!20, opacity=0.4] (0,0)--(2.2,2.2)--(2.7,2.2) -- (2.7,-2.2) -- (2.2,-2.2) -- cycle;
\coordinate (g) at (-1,0);
\draw  (g) node {$\times$};
\draw  (g) node[below left] {$g$};
\draw[dashed]  (g) circle (2.3) ;
\draw[dashed] (g) -- ++(160:2.3) node[midway,  above, sloped ] {$r+ \eta$};
\draw [dashed, very thick] (g) -- ++(2,2);
\draw [dashed, very thick] (g) -- ++(2,-2);
\draw (1.5,-2) node {$G_0g$};
\end{tikzpicture}
\caption{Illustration of \cref{lem:geomHoroball3} for $G= \Z^2$. }\label{fig:geomHoroball3}
\end{figure}

\section{Robinson Crusoe theorem}
\label{sec:Robinson_Crusoe}

In this section we prove our main abstract theorems that will allow us in the next sections to establish some extensions of Schwartzman's theorem and other applications.  
We recall that a {\em topological dynamical system} $(X, T, G)$ is given by a continuous left-action $ T \colon G \times X \to X$ of a group $G$ on a compact metric space $X$ equipped with a distance ${\rm dist}$. This provides a family of  self-homeomorphisms $\{T_{g}: g\in G\}$ of $X$ such that $T_{g}\circ T_{h} = T_{gh}$ for any $g,h \in G$ and the maps $T_g$ depend continuously (for the uniform topology) on the element $g$. A subset $Y\subseteq X$ is {\em $G$-invariant} (or simply {\em invariant} when there is no ambiguity) if $T_g(Y)=Y$ for all $g\in G$. 

The next theorem is \cref{theo1:SchwartzmannVersionAbstraite} from the introduction.

\begin{theo}[Robinson Crusoe theorem]\label{theo:SchwartzmannVersionAbstraite} Let $ (X,T,G)$ be a topological dynamical system and  $G $ be an infinite group with a proper right-invariant distance. Let $O \subsetneq X$ be an open, not closed and $G$-invariant ($T_g(O) = O, \forall g \in G$) subset of $X$. Then, for any neighborhood $U$ of the boundary of $O$ there exists a horofunction $j \in \partial G$ such that its associated  horoball $H$ in $G$ satisfies 
$$O \cap \bigcap_{g \in H}T_g^{-1}(U)\neq \emptyset.$$
\end{theo}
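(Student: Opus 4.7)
The plan is to find a point $y^* \in O$ and a horoball $H$ with $T_g y^* \in U$ for all $g \in H$ by the following compactness scheme. I take a sequence of points in $O$ approaching $\partial O$; for each such point I track the first group element at which its orbit leaves a shrunken version of $U$; I translate by that element so that the maximal ball on which the orbit stays good becomes a shifted ball $B_{R_n}(h_n^{-1})$ with $d(h_n^{-1},1_G)=R_n\to\infty$; and I let $n\to\infty$ so these shifted balls accumulate on a horoball (recall a horoball is exactly $\{j<0\}$ for $j=\lim b_{g_n}$, which can be thought of as a limit of balls $B_{d(g_n,1_G)}(g_n)$).

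First I would shrink $U$: by normality of the compact metric space $X$ applied to the disjoint closed sets $\partial O$ and $X\setminus U$, pick an open $U'$ with $\partial O\subseteq U'$ and $\overline{U'}\subseteq U$; it suffices to prove the statement with $U'$ in place of $U$. Since $O$ is open and not closed, $\partial O=\overline O\setminus O$ is nonempty, and $G$-invariance of $O$ implies $G$-invariance of $\partial O$ (each $T_g$ is a homeomorphism). Pick $y\in\partial O$ and $x_n\in O$ with $x_n\to y$, and set
$$R_n := \inf\{d(g,1_G) : T_g x_n \notin U'\}\in(0,+\infty].$$
Continuity of $T:G\times X\to X$ on the compact set $\overline{B_R(1_G)}\times\{y\}$, together with $T_g y\in \partial O\subseteq U'$ for every $g$, provides a neighborhood of $y$ whose entire $\overline{B_R(1_G)}$-translate lies in $U'$, and hence $R_n\to+\infty$. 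If some $R_n=+\infty$, the orbit of $x_n$ is already in $U'$ and any horoball works; otherwise properness of $d$ lets me pick $h_n$ attaining the infimum, with $d(h_n,1_G)=R_n$ and $T_{h_n}x_n\notin U'$.

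Set $y_n:=T_{h_n}x_n$; by invariance $y_n\in O$ and by construction $y_n\in X\setminus U'$. Extract subsequences so that $y_n\to y^*$ in $X$ and $b_{h_n^{-1}}\to j$ in $C(G)$ (Arzel\`a--Ascoli), noting that $d(h_n^{-1},1_G)=R_n\to\infty$ forces $j\in\partial G$. Let $H=\{g\in G: j(g)<0\}$. For any $g\in H$, eventually $b_{h_n^{-1}}(g)<0$, i.e.\ $d(g,h_n^{-1})<R_n$; right-invariance gives $d(gh_n,1_G)=d(g,h_n^{-1})<R_n$, so $T_g y_n = T_{gh_n}x_n\in U'$. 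Letting $n\to\infty$ yields $T_g y^*\in\overline{U'}\subseteq U$. Finally $y^*\in\overline O\cap(X\setminus U')\subseteq O$, because $\partial O\subseteq U'$ excludes $y^*$ from $\partial O$, while $\overline O = O\cup\partial O$.

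The main subtlety I anticipate is the open--closed tension: any compactness argument delivers only $T_g y^*\in\overline{U'}$ and $y^*\in\overline O$, so without the preliminary shrinking $\overline{U'}\subseteq U$ one could guarantee neither $y^*\in O$ (membership in an open set) nor $T_g y^*\in U$. Introducing $U'$ is the key move that resolves both issues simultaneously; the remainder is a routine matching of the shifted ball $B_{R_n}(h_n^{-1})$, obtained via right-invariance from the ``good'' ball $B_{R_n}(1_G)$ for $x_n$, to the horoball limit of the Busemann functions $b_{h_n^{-1}}$.
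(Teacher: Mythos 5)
Your proof is correct and takes a genuinely different route from the paper's. The paper argues by contradiction: it introduces the quantity $d_0 = \inf_{x\in O,\,H\in\cH}\sup_{g\in H}\dist(T_g x,\partial O)$, assumes $d_0>0$, derives a finite combinatorial constant $M_\epsilon$ bounding how far along any horoball an orbit can remain close to $\partial O$, and then uses the geometric tangency lemma (Lemma~\ref{lem:geomHoroball}) to ``push back'' a hypothetical escaping element and contradict the definition of $M_\epsilon$ (and hence of $d_0$). Your argument is instead a direct construction: start at $x_n\in O$ approaching $\partial O$, track the first escape radius $R_n$ from a shrunken $U'\supseteq\partial O$, translate by the escape element $h_n$ so that the good ball becomes $B_{R_n}(h_n^{-1})$, and pass to a limit so that these shifted balls converge to a horoball in exactly the way the paper's own description of horoballs suggests (``each horoball can be seen as the limit of \dots open balls centered at $g_n$ with radius $d(g_n,1_G)$''). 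Both proofs use properness, right-invariance, and the symmetry $d(h^{-1},1_G)=d(h,1_G)$, but yours avoids the intermediate quantity $M_\epsilon$ and Lemma~\ref{lem:geomHoroball} entirely. The trade-off is that the paper's machinery (the $d_0$/$M_\epsilon$ setup together with Lemmas~\ref{lem:geomHoroball2} and~\ref{lem:geomHoroball3}) is reused almost verbatim for the Directed Robinson Crusoe theorem (Theorem~\ref{thm:directed_RC}), where the set of admissible horoballs is constrained; your direct argument proves the undirected statement more cleanly but would need nontrivial modification to control the direction of the limiting horofunction $j$. Two small points worth making explicit in a write-up: the infimum defining $R_n$ is attained because $\{g: T_g x_n\notin U'\}$ is closed and closed balls are compact; and only the tail of the sequence matters, so the possibility $R_n=0$ (before $x_n$ enters $U'$) is harmless since $R_n\to\infty$ in any case.
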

Equivalently, \cref{theo:SchwartzmannVersionAbstraite} states that for any closed invariant subset $F\neq X$, which is not isolated (i.e., it is not open), there exists a point outside of $F$ which remains close to $F$ under the action along a horoball $H$.

Notice that  the distance (or even the semi-distance) on the group $G$ can be chosen independently of the action. 
The theorem is worthwhile for a distance where any horoball  is large, like in the $\ell_1$ distance in a finitely generated group.

The idea of the proof  for  $G =\Z$ is as follows. By contradiction, assume  no backward or forward
orbit stays in a neighborhood of the border of  $O$. A compactness argument provides a  finitary version  as follows: there exists a constant $M>0$ such that no point in $O$ can have $M$ consecutive iterates staying close to the border $\partial O$. Since $O$ is not closed and invariant, any point close enough to the border $\partial O$ will  then contradict the constant $M$. The proof  for the general case is similar but by considering  the compact set of horoballs instead  of  the half-spaces $(0, +\infty)$ or  $(-\infty,0)$. The geometrical fact that the horoballs are locally tangent to a ball at the identity (\cref{lem:geomHoroball}) helps to go from the finitary version to the infinite one.

\begin{proof}
Observe that the claim is obvious if $G$ admits an empty horoball. So we assume that all horoballs are nonempty. 
This allows defining the non-negative constant $d_{0}$ by 
$$ d_{0} = \inf_{x \in O, H  \in \cH} \sup_{g\in H} {\rm dist}(T_{g}(x), \partial O).$$
 Our goal is to show that $d_0=0$. For the sake of contradiction, assume that $d_{0}>0$.  
Since the action is continuous and $B_1(1_G)$ is precompact, the family of maps $\{T_h : h \in B_1(1_G)\}$ is relatively compact and hence there exists $\delta_0>0$ such that  ${\rm dist}(x, y) < \delta_0$ implies that ${\rm dist}(T_h (x), T_h(y)) < d_0/2 $ for all $h \in B_1(1_G)$. 

For any 
$0<\epsilon < \min\{d_0,\delta_{0},1\}$ and $N$ the integer given by \cref{lem:NonemptyHoroball}, set $M_\epsilon$ to be the constant 
\begin{equation*}
\begin{split} 
M_\epsilon = \sup \bigl\{n \ge N :  \exists H \in \cH, \exists x \in X \textrm{ with }{{\rm dist}}(x,\partial O) \ge \epsilon,  \textrm{ s.t. } \forall g\in H \cap  {B_{n}(1_G)}, \\ \exists h \in B_\epsilon(1_G), ~   {\rm dist}(T_{hg}(x), \partial O) < \delta_0 \bigr\}.
\end{split} 
\end{equation*}

By continuity of the action, the set from which we take the supremum is not empty for any $\epsilon>0$ small enough. Moreover, we claim that $M_{\epsilon}$ is finite. Indeed, if this is not the case, then for infinitely many integers $n$ we may find $H_n\in \cH$ and $x_n\in X$ such that $\dist(x_n,\partial O)\geq \epsilon$, and for all $g\in H_n\cap B_n(1_G)$, $\dist(T_{hg}(x_n),\partial O)<\delta_0$ for some $h\in B_{\epsilon}(1_G)$. Since $\partial O$ is invariant by the action, 
$\dist(T_g(x_n),\partial O)<d_0/2$ for all $g\in H_n\cap B_n(1_G)$. We may write $H_n=\{j_n<0\}$ and, taking a subsequence if needed, we can assume $x_n$ and $j_n$ converge to some $x$ and $j$ respectively as $n\to \infty$. Let $H=\{j<0\}$ and take $g\in H$. For $n$ large enough we have that $g\in H_n\cap B_n(1_G)$ and therefore $\dist(T_g(x_n),\partial O)<d_0/2$. This implies that $\dist(T_g(x),\partial O)\le d_0/2$. This inequality is in conflict with the definition of $d_0$. We conclude that $M_{\epsilon}<\infty$. 

The continuity of the action and the compactness of $\partial O$ provide the existence of $0<\delta < \epsilon$ such that  
\[ {\rm dist}(x,\partial O) < \delta ~ \text{ implies  that }  \sup_{g\in\overline{ B_{n_0}(1_G)}}   {\rm dist}(T_{g}(x),\partial O) <\epsilon,\] 
where $n_0 $  is the integer given by \cref{lem:geomHoroball} associated with the constants $M = M_{\epsilon}+1$ and $\epsilon$.

We will show by contradiction that any point  $x$ such that ${\rm dist}(x,\partial O) < \delta$ satisfies ${\rm dist}(T_{g}(x),\partial O) <  \epsilon$ for any $g \in G$.
Otherwise, assume that the set
\begin{equation} \label{eq:set_equation_inf_radius} \{ d(g,1_G) \colon  g \in G \textrm{ s.t. } {\rm dist}(T_{g}(x),\partial O) \ge \epsilon   \}\end{equation} 
is nonempty for some $x$ with ${\rm dist}(x,\partial O) < \delta$. Let $g_* \in G$ be an element realizing the infimum of this closed set. 
By definition of $\delta$, $d(g_*,1_G)> n_0$
 and \cref{lem:geomHoroball} provides  a  horoball  $H \in \cH$ such that 
$$H \cap B_{M_{\epsilon}+1 }(1_G) \subseteq B_\epsilon (1_G) B_{d(g_*,1_G)}(g_*^{-1})= B_\epsilon (1_G) B_{d(g_*,1_G)}(1_G)g_*^{-1}.$$
Thus, for  any $h \in H \cap B_{M_{\epsilon}+1}(1_G)$ ($\neq \emptyset$ by \cref{lem:NonemptyHoroball})  there exists $h_\epsilon  \in  B_\epsilon (1_G) $ such that  $g = h_\epsilon h g_*$ belongs to $B_{d(g_*,1_G)}(1_G)$. 
Since $d(g_*, 1_G)$ is the infimum of the set in \eqref{eq:set_equation_inf_radius}, we have that $ {\rm dist}(T_{g}(x),\partial O) < \epsilon$.
Hence, we get that   
$ {\rm dist}(T_{h_\epsilon h} (T_{g_*}(x)),\partial O) < \epsilon$ for any $h \in  H \cap B_{M_{\epsilon}+1}(1_G)$. The definition of $M_\epsilon$ imposes that  ${\rm dist}(  T_{g_*}(x),\partial O) < \epsilon$, contradicting the definition of $g_*$.

Hence, for any $x \in O$ with ${\rm dist}(x, \partial O) < \delta$ (such an $x$ exists because $O$ is not closed) we get   ${\rm dist}(T_{g}(x), \partial O) < \epsilon < d_{0}$ for any $g \in G$, which is in contradiction with the definition of  $d_{0}$.
 \end{proof}
 
Remark that in \cref{theo:SchwartzmannVersionAbstraite}, the horoball $H$ depends on $O$ (and $U$).  It is natural to wonder if in this theorem one could fix the horoball $H$ beforehand and show that $O\cap \bigcap_{g\in H} T_g^{-1}(U)\neq \emptyset$. This is not the case, even for $G=\Z$.

\begin{example}\label{rem:NSex} Consider the dynamical system given by a $\Z$-action of the time-1 map of a gradient system with only two fixed points on the circle. It is a homeomorphism with two fixed points, one (the south point) is attracting and the other (the north point $N$) is repulsive.  Taking $O$ to be the complement of the north point (and $U$ a small open ball containing it), no accumulation point for forward orbits can be in the neighborhood of the north point. \end{example}

Actually, \cref{rem:NSex} illustrates the main obstruction to finding a horoball that satisfies the conclusion of Robinson Crusoe theorem, within a prescribed set of horoballs: there is a repulsion property. We will show that without the existence of repulsive sets, it is possible to restrict  {\it a priori} the possible directions of the horoball. \cref{thm:directed_RC} will provide  a directed version of Robinson Crusoe theorem.

To be more precise, we introduce the notion of {\em repulsion} of a set for a semigroup. 
For a set $S \subseteq G$, we let $\langle S \rangle_+$ denote the semigroup generated by the elements of $S$, \ie, the set of elements of the form $s_1\cdots s_n$, where each $s_i$ belongs to $S\cup\{1_G\}$.

\begin{defi} Let $(X,T,G)$ be a topological dynamical system, $S \subseteq G$,  and $O \subseteq X$ an open $G$-invariant subset. We say that $O$ is pointwise $S$-repulsed by its border if there exists $\delta>0$ such that for any finite set $F\subseteq O $, 
$$
\dist(T_gF,\partial O) \le \delta \textrm{ for at most finitely many } g \in \langle S \rangle_+.$$
\end{defi}
This means that a finite set of points in $O$ can be separated from $\partial O$ under the iteration of $T_g$ for some $g\in \langle S \rangle_+$. For instance,  for the north-south system in \cref{rem:NSex}, the set  $\S^1\setminus \{N\}$ is pointwise $\{ 1\}$-repulsed by its border, but not pointwise $\{- 1\}$-repulsed.

 In the following, we will be interested in cases of $S$ and $O$, where this property fails, \ie, for any $\delta>0$ there exists a finite set $F \subseteq O$ such that for infinitely many $g \in \langle S \rangle_+$ there exists an element $x_g \in F$ satisfying $\dist(T_g(x_g), \partial O) \leq \delta$.  In such case, we say that $O$ is  {\em not pointwise $S$-repulsed by its border}. Notice that in this case, the pigeonhole principle ensures the existence of a point $x\in F$ such that $x_g= x$ for infinitely many $g\in \langle S \rangle_+$.

Under this hypothesis, for some subset $S$ of $G$ with  good properties, one can state the following Robinson Crusoe theorem restricting the set of possible horoballs.

\begin{theo}[Directed Robinson Crusoe theorem] \label{thm:directed_RC} Let $(X,T,G)$ be a topological dynamical system where $G$ is an infinite group with a proper right-invariant distance. 
Let $G_0 \subseteq G$ be an unbounded subset with an unbounded  complement $G \setminus G_0$.

Assume that 
\begin{itemize}
    \item $O \subsetneq X$ is an open, not closed,  $G$-invariant  subset of $X$.
    \item  The set $O$ is not pointwise $\tilde{S}$-repulsed by its border for some finite subset $\tilde{S} \subseteq \cap_{H \in\cH_{\partial G_0}} H$.   
\end{itemize}
Then, for any neighborhood  $U$ of the border $\partial O$  there exists a  horofunction $j \in \partial [G\setminus  G_0^{-1}]$ such that its associated horoball $H\in \cH_{\partial [G\setminus  G_0^{-1}] }$ satisfies
$$O \cap \bigcap_{g \in H}T_g^{-1}(U)\neq \emptyset.$$
\end{theo}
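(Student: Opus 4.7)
The plan is to mimic the proof of the undirected Robinson Crusoe theorem \cref{theo:SchwartzmannVersionAbstraite}, with two key modifications: replace \cref{lem:geomHoroball} by the direction-aware sharpening \cref{lem:geomHoroball2} applied with $A = G\setminus G_0$, so that horoballs produced live automatically in $\cH_{\partial A^{-1}} = \cH_{\partial[G\setminus G_0^{-1}]}$; and use the non-repulsion hypothesis in place of ``$O$ is not closed'' to supply both a point of $O$ close to $\partial O$ and a natural anchor $\bar g\in\langle\tilde S\rangle_+$ along which \cref{lem:geomHoroball2} can be applied. Concretely, set
\[ d_0 := \inf_{x\in O,\; H\in\cH_{\partial[G\setminus G_0^{-1}]}}\ \sup_{g\in H}\dist(T_g x,\partial O). \]
After dealing with the trivial case of empty horoballs as in the original proof, assume $d_0>0$ for contradiction, and define $\delta_0,\epsilon,M_\epsilon,\delta,n_0$ verbatim as in the proof of \cref{theo:SchwartzmannVersionAbstraite} but restricting horoballs to $\cH_{\partial[G\setminus G_0^{-1}]}$. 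The finiteness of $M_\epsilon$ carries over using that $\partial[G\setminus G_0^{-1}]$ is closed, hence compact, in $\overline{b(G)}$, and $n_0$ now comes from \cref{lem:geomHoroball2} with $A=G\setminus G_0$, $M=M_\epsilon+1$, and $\epsilon$.

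Next, using non-repulsion and the pigeonhole principle, I fix $x_0$ in the finite set $F\subseteq O$ and pick $\bar g\in\langle\tilde S\rangle_+$ of arbitrarily large norm with $\dist(T_{\bar g}(x_0),\partial O)<\delta$; set $y:=T_{\bar g}(x_0)\in O$. It suffices to show $\dist(T_g y,\partial O)<\epsilon$ for every $g\in G$, which, since $\epsilon<d_0$ and $y\in O$, contradicts the definition of $d_0$. Otherwise, let $h_*$ minimize $d(h,\bar g)$ over the closed nonempty set $B(x_0):=\{h\in G : \dist(T_h(x_0),\partial O)\ge\epsilon\}$; equivalently, $g_*:=h_*\bar g^{-1}$ minimizes $d(\cdot,1_G)$ over $B(y)$, and the continuity packaged into $\delta$ gives $d(h_*,\bar g)\ge n_0$. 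Provided $g_*\in G\setminus G_0$, \cref{lem:geomHoroball2} with $\bar g_L=\bar g$ and $g_L=h_*$ yields a horoball $H\in\cH_{\partial[G\setminus G_0^{-1}]}$ such that for every $g\in H\cap B_{M_\epsilon+1}(1_G)$ there is $h'_\epsilon\in B_\epsilon(1_G)$ with $gh_*=h'_\epsilon h_1$ and $d(h_1,\bar g)<d(h_*,\bar g)$. Minimality of $h_*$ forces $\dist(T_{h_1}(x_0),\partial O)<\epsilon<\delta_0$, and choosing $h_\epsilon:=(h'_\epsilon)^{-1}\in B_\epsilon(1_G)$ gives $T_{h_\epsilon g}(T_{h_*}(x_0))=T_{h_1}(x_0)$, so the point $x:=T_{h_*}(x_0)$ (which satisfies $\dist(x,\partial O)\ge\epsilon$) together with $H$ witnesses that $M_\epsilon+1$ lies in the set defining $M_\epsilon$, a contradiction.

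The main obstacle is to guarantee $g_*\in G\setminus G_0$, and this is precisely where the structural hypothesis $\tilde S\subseteq\bigcap_{H\in\cH_{\partial G_0}} H$ and \cref{lem:geomHoroball3} enter. By compactness of $\partial G_0$ and finiteness of $\tilde S$ there exists $\eta>0$ with $j(s)\le -\eta$ for every $s\in\tilde S$ and $j\in\partial G_0$. Choosing $\bar g=s_1\cdots s_k\in\langle\tilde S\rangle_+$ with $k$ large enough, I plan to apply \cref{lem:geomHoroball3} successively with $g=s_i^{-1}$ to show that if $h_*\bar g^{-1}\in G_0$ then right-multiplication of $h_*$ by some $s_i^{-1}$ produces an element strictly closer to $\bar g$ than $h_*$. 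The delicate point, and the step I expect to require the most care, is to verify that this contracted element still belongs to $B(x_0)$ in the possibly non-abelian setting, so as to obtain a genuine competitor breaking the minimality of $h_*$; this relies on the right-invariance of $d$ together with the cocycle-type bound $|j(ab)-j(b)|\le d(a,1_G)$ for horofunctions, and on the freedom to make $k$ (and thus the depth of $\bar g$ in $\langle\tilde S\rangle_+$) as large as needed.
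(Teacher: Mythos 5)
Your setup (the constants $d_0$, $\delta_0$, $M_\epsilon$, the restriction of horoballs to $\cH_{\partial[G\setminus G_0^{-1}]}$, and the use of \cref{lem:geomHoroball2} with $A=G\setminus G_0$ when the minimizer satisfies $h_*\bar g^{-1}\in G\setminus G_0$) matches the paper's proof. But the case $h_*\bar g^{-1}\in G_0$, which you correctly identify as the crux, is a genuine gap, and the tools you propose cannot close it. What \cref{lem:geomHoroball3} gives you there is purely geometric: $h_*$ lies strictly closer to a \emph{shifted center} (roughly $d(h_*,\tilde s\bar g)<d(h_*,\bar g)-\eta$ for $\tilde s\in\tilde S$). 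To turn this into a contradiction with the minimality of $h_*$ over $B(x_0)=\{h:\dist(T_h(x_0),\partial O)\ge\epsilon\}$ you would need the contracted element to lie in $B(x_0)$ — a dynamical condition on the orbit of $x_0$ about which right-invariance and the $1$-Lipschitz property of horofunctions say nothing; there is simply no reason for it to hold. The alternative of restarting the argument at the new center $\tilde s^m\bar g$ (after finitely many shifts push $h_*$ out of $G_0$) also fails: applying \cref{lem:geomHoroball2} there requires $B_{d(h_*,\tilde s^m\bar g)}(\tilde s^m\bar g)\subseteq G\setminus B(x_0)$, but the only ball minimality controls is $B_{d(h_*,\bar g)}(\bar g)$, and the new ball is not contained in it unless $d(\tilde s,1_G)\le\eta$, which fails in general since $\eta=-\sup_{j\in\partial G_0}j(\tilde s)$ is typically much smaller than $d(\tilde s,1_G)$. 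A single minimizer anchored at a single far-away $\bar g$ therefore stalls.

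The paper's proof uses a different bookkeeping precisely to avoid this. For a point $y$ with $\dist(y,\partial O)<\delta$ it grows the good set $C(y)=\{g:\dist(T_g(y),\partial O)<\epsilon\}$ \emph{outward from the identity}: $C(y)\supseteq B_{n_1}(1_G)$, and inductively $B_R(\bar g)\subseteq C(y)$ implies $B_{R+\eta}(s\bar g)\subseteq C(y)$ for every $s\in\tilde S^{-1}$. In that induction step the minimizer lying in $G_0 s\bar g$ is eliminated because \cref{lem:geomHoroball3} places it inside the \emph{previously established} ball $B_R(\bar g)$ — this is the dynamical information your scheme lacks. Iterating over all words yields $C=\bigcup_n\bigcup_{s_1,\dots,s_n}B_{n_1+n\eta}(s_n\cdots s_1)\subseteq C(y)$, and only then does non-repulsion enter, with \emph{infinitely many} $\tilde g\in\langle\tilde S\rangle_+$ rather than one: translating $C$ by $\tilde g=(s_n\cdots s_1)^{-1}$ recovers balls $B_{n_1+n\eta}(1_G)$ of unbounded radius, covering $G$ and contradicting $d_0>0$. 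Note also that your $\bar g\in\langle\tilde S\rangle_+$ plays the role of the paper's translating elements $\tilde g$, not of the ball centers, which are words in $\tilde S^{-1}$.
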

Notice that the hypothesis of the existence of a set not pointwise $\tilde{S}$-repulsed by its border  for some  finite subset $\tilde{S}$ clearly implies restrictions on the dynamics but also algebraic restrictions on the acting group $G$. 
For instance, if $G$ is a locally finite group (like $\bigoplus \Z/2\Z$), any finitely generated semi-group  $\langle \tilde{S}\rangle_+$  is finite and then no $G$-action can have a set not pointwise $\tilde{S}$-repulsed by its border. 

The proof follows the same strategy as that of  \cref{theo:SchwartzmannVersionAbstraite} but taking into consideration the restriction on the possible set of horoballs. Here  the passage from the finite to the infinite version is ensured by the directed geometrical  \cref{lem:geomHoroball2}. \cref{lem:geomHoroball3} enables us to get a large enough set of iterations that stay close to the border, thus getting a contradiction. 
 
\begin{proof} To simplify the notations, we set $\cH_0 =\cH_{\partial [G\setminus  G_0^{-1}] }$.  Notice that the result is obvious if there is an empty horoball. So we assume that all horoballs in  $\cH_{0 }$ are nonempty. A similar contradiction argument (we leave the details to the interested reader) as in \cref{lem:NonemptyHoroball}, shows that there exists an integer $N>0$ such that  $H\cap B_N (1_G) \neq \emptyset$ for every horoball $H \in \cH_{0}$.
Let $d_{0}$ be the non-negative constant
$$ d_{0} = \inf_{x \in O , H  \in \cH_{0}} \sup_{g\in H} {\rm dist}(T_{g}(x), \partial O).$$
For the sake of contradiction, assume that $d_{0}>0$.  
Since the action is continuous and $B_1(1_G)$ is precompact, the family of maps $\{T_g: g \in B_1(1_G)\}$ is relatively compact and hence there exists $\delta_0>0$ such that  ${\rm dist}(x, y) < \delta_0$ implies that ${\rm dist}(T_g (x), T_g(y)) < d_0/2 $ for all $g \in B_1(1_G)$. 

For any $0<\epsilon < \min(\delta_{0},1)$, set $M_\epsilon$ to be the constant 

\begin{equation*}
\begin{split} 
M_\epsilon= \sup \bigl\{n \ge N :  \exists H \in \cH_{0}, x \in X \textrm{ s.t. }{{\rm dist}}(x,\partial O) \ge \epsilon,  \forall g\in H \cap  {B_{n}(1_G)}, \\ \exists h \in B_\epsilon(1_G), ~   {\rm dist}(T_{hg}(x), \partial O) < \delta_0 \bigr\}.
\end{split} 
\end{equation*}

By continuity of the action, the set from which we take the supremum is not empty for any small enough $\epsilon>0$. Moreover, we claim that $M_{\epsilon}$ is finite. Indeed, if this is not the case, then for infinitely many integers $n$ we may find $H_n\in \cH_{0}$ and $x_n\in X$ such that $\dist(x_n,\partial O)\geq \epsilon$, and for all $g\in H_n\cap B_n(1_G)$, $\dist(T_{hg}(x_n),\partial O)<\delta_0$ for some $h\in B_{\epsilon}(1_G)$. It follows that 
$\dist(T_g(x_n),\partial O)<d_0/2$ for all $g\in H_n\cap B_n(1_G)$.  We may write $H_n=\{j_n<0\}$ and taking a subsequence if needed, we can assume that $x_n$ converges to some $x$ and $j_n$ to some $j$. Then the horoball $H=\{j<0\}$ belongs to $\cH_0$. Take $g\in H$. For $n$ large enough we have that $g\in H_n\cap B_n(1_G)$ and therefore $\dist(T_g(x_n),\partial O)<d_0/2$. This implies that $\dist(T_g(x),\partial O)\le d_0/2$. This inequality is in conflict with the definition of $d_0$. We conclude that $M_{\epsilon}<\infty$. 

Let $n_0$  be the integer given by \cref{lem:geomHoroball2} associated with the constants $M = M_{\epsilon}+1$, $\epsilon$ and the set $A= G \setminus G_0$. 
Let $S = \tilde{S}^{-1}$, that is,  a finite set in $\{g \in G: \forall j \in \partial G_0, \ j(g^{-1}) <0 \}$ such that  $O$ is not pointwise $S^{-1}$-repulsed  by its border. Set $\eta$ to be $-\sup_{s\in S, j \in \partial G_0} j(s^{-1}) >0$. Let $n_1$ be a constant provided by \cref{lem:geomHoroball3} for each element $g$ in the finite set $S$. Furthermore, taking $n_1$ large enough, we may assume   $\overline{B_{n_0}(s)} \subset  B_{n_1}(1_G)$ for any element $s \in S$.

The continuity of the action and the compactness and invariance of $\partial O$ provide the existence of $0<\delta < \epsilon$ such that  
\[ {\rm dist}(x,\partial O) < \delta ~ \text{ implies  that }  \sup_{g\in\overline{ B_{n_1}(1_G)}}   {\rm dist}(T_{g}(x),\partial O) <\epsilon.\] 
\medskip

We will show that there is a set $C \subseteq G$ containing arbitrary large balls such that  
  ${\rm dist}(T_{g}(x),\partial O) <  \epsilon$  for any $g\in C$ and for any point $x \in O$  satisfying ${\rm dist}(x,\partial O) < \delta$. 
To prove it, consider for a point $x$ at distance less than $\delta$ to $\partial O$, the set 
\[
C(x)  =\{g \in G: {\rm dist}(T_{g}(x),\partial O) < \epsilon   \}.
\]
By the choice of $\delta$, this set $C(x)$ contains the ball $B_{n_1}(1_G)$. 
First, we claim it satisfies the following property:
\begin{align}\label{eq:propStable} 
\textrm{If for } g \in G, \exists H \in \cH_0 \textrm{ s.t. } \left[H\cap B_{M_\epsilon +1}(1_G)\right]g \subseteq B_\epsilon(1_G)C(x), \textrm{ then } g \in C(x). \tag{\texttt{P}}
\end{align}
This comes from the fact that if for any $h $ in $H \cap B_{M_\epsilon +1} (1_G)$ (nonempty by the definition of $N$)  there is ${h}_\epsilon \in \left(B_\epsilon(1_G)\right)^{-1}$ such that ${\rm dist}(T_{{h}_\epsilon h}(T_{g} x),\partial O) < \epsilon $.  The definition of $M_\epsilon$ implies then that ${\rm dist}(T_{g}(x),\partial O) < \epsilon $, \ie, $g \in C(x)$. 

Assume that $C(x)$ contains a ball $B_{R} (\bar{g})$ of radius $R \ge n_1$. We claim that for any $s \in S$ the  set $C(x)$ also contains the ball of greater radius $B_{R+\eta}(s\bar{g})$. 
Otherwise, there is $h^* \in B_{R+\eta}(s\bar{g}) \setminus C(x)$ minimizing  the distance $d(h^*, s\bar{g})$. This minimality condition implies $B_{d(h^*,s \bar{g})} (s\bar{g})\subset C(x)$.
Also, notice that by the choice of the constant $n_1$,  the ball $\overline{B_{n_0}(s\bar{g})}$ is included in $B_{R}(\bar{g})$, hence in $C(x)$. So we have  $d(h^*, s\bar{g}) > n_0$.  
If  $h^*$ belongs to $[G\setminus G_0]s\bar{g}$, \cref{lem:geomHoroball2} ensures the existence  of a horoball $H \in \cH_0$ such that 
$$\left[H \cap B_{M_\epsilon+1}(1_G)\right] h^*  \subseteq B_\epsilon (1_G) B_{d(h^*,s \bar{g})}(s\bar{g}) \subseteq  B_\epsilon (1_G) C(x).$$ 
This is impossible by Property \eqref{eq:propStable}.
It follows that $h^*$ belongs to $ G_0s\bar{g}$. Then, \cref{lem:geomHoroball3} applied to $g=s$,  implies that $h^*$ belongs to $B_{R}(\bar{g}) \subseteq C(x)$: again a contradiction.  

Applying inductively the former claim, we get for any sequence $(s_n)_{n\geq 0}$ in $S$ that $C(x)$ contains the set $\bigcup_{n\geq 0} B_{n_1+n\eta}(s_n \cdots s_1)$, which contains arbitrarily large balls. Therefore $C(x)$ contains the set $C=\bigcup_{n\geq 0}\bigcup_{s_1,\ldots,s_n\in S} B_{n_1+n\eta}(s_n \cdots s_1)$.  Hence any $x\in O$ such that $\dist(x,\partial O)<\delta$, satisfies 
\[ \dist(T_g(x),\partial O) <\epsilon  \  \text{ for all } g \in C.  \]
 Given that the set $O$ is not pointwise $S^{-1}$-repulsed by its border, there exists $x \in O$ such that $\dist(T_{\tilde{g}} (x), \partial O) < \delta$  for infinitely many  $\tilde{g} \in \langle S^{-1} \rangle_+$. For any such $\tilde{g}$, it follows that $\dist(T_g (x), \partial O) < \epsilon$, $\forall g \in   C \tilde{g}$.
Since any $\tilde{g}$ is of the form $(s_n \cdots s_1)^{-1}$, with $s_i\in S$, we get that $\dist(T_h (x), \partial O) < \epsilon$ for any $h$ in $B_{n_1+n \eta} (1_G)$. Since this is true for infinitely many $\tilde{g}$, this remains true  for infinitely many integers $n$, hence for any $h$ in $G$. This contradicts the definition of $d_0$.
\end{proof} 

\section{Extensions of Schwartzman theorem}\label{sec:asymptotics}

In this section we apply the Robinson Crusoe theorems to get several extensions of a classical result due to  Schwartzman in his Ph.D. Thesis \cite{Sch} to general group actions. Recall that Schwartzman's theorem states that given a homeomorphism $T\colon X\to X$ of an infinite compact metric space $X$ and $\epsilon >0$ there exist positive (resp. negative) $\epsilon$-asymptotic pairs. That is, different points $x,y \in X$ such that ${\rm dist}(T^nx,T^ny)\leq \epsilon$ for all $n\geq 0$ (resp. $n\leq 0$). 
This is equivalent to say that there is no positively (or negatively) expansive homeomorphisms on an infinite compact space.  
This result can be seen as a version in topological dynamics of the classical Morse-Hedlund's theorem on the complexity of an infinite subshift \cite{MH}. 

A crucial issue in extending Schwartzman's theorem to general group actions is to define a good notion of asymptoticity that extends the one of asymptotic pair for $\Z$-symbolic systems. We propose a notion using the elements of geometric group theory that we developed in the last sections.
We recall that a factor map between topological dynamical systems $(X,T,G)$ and $(Y,S,G)$ is a continuous and onto map $\pi\colon X\to Y$ such that $S_g \circ \pi = \pi \circ T_g$ for all $g\in G$.
\begin{defi}
For  two dynamical systems $(X,T, G)$ and $(Y,S,G)$, a factor map $\pi \colon X \to Y$, $\epsilon>0$ and a horoball $H$, we call an $(\epsilon, H)$-asymptotic pair relative to $\pi$ any pair $(x,y)$ of two different points $x,y \in X$ such that $\pi(x) = \pi(y)$ and  ${\rm dist} (T_g x, T_gy) \leq \epsilon$ for all $g \in H$. 

\noindent When the system $Y$ is trivial we simply say that $(x,y)$ is an $(\epsilon, H)$-asymptotic pair. 
\end{defi}

We might naively expect from Schwartzman's theorem that any group action admits $(\epsilon,H)$-asymptotic pairs for each horoball $H$. But this is false even for $\Z^2$-actions, as shown by Ledrappier's shift example. 
\begin{example}\label{ex:Ledrappier}(Ledrappier's example)
Consider the $\Z^2$ shift
\[X = \{ (x_{i,j})_{(i,j) \in \Z^2} \in (\Z/2\Z)^{\Z^2}:x_{i,j}+x_{i+1,j}+x_{i,j+1} = 0 \mod 2 \quad \forall i,j \in \Z\},\]
and the $\Z^2$-action generated by the horizontal and vertical shifts.
If $v \in \R^2$  is not an  outgoing  vector normal to  a face of the unit simplex in $\R^2$, then for each $x \in X$, the coordinates of $x$ within $H_v =\{z : \langle z,v \rangle < 0\}$ determine all the coordinates of $x$. So, there is no $(\epsilon, H_v)$-asymptotic pair for any small enough $\epsilon>0$. Conversely, for each of the three  normal outgoing vectors $v$ of the face of the unit simplex,  there is an asymptotic pair for the half-space $H_v$. See \cite[Example 2.7]{BD} for details. 
\end{example}

Nevertheless, an important step in extending Schwartzman's theorem to $\Z^{d}$-actions has been done by Boyle and Lind through the notion of nonexpansive direction \cite{BD}. In contrast with the $\Z$-case, their result provides the existence of $(\epsilon,H)$-asymptotic pairs only for some horoball $H$. Technically, it ensures the existence of some nonexpansive directions from which the former claim can be easily deduced.  

\subsection{Existence of $(\epsilon,H)$-asymptotic pairs}\label{sec:ExistenceAsymptotic}
Thanks to  \cref{theo:SchwartzmannVersionAbstraite} we propose a first extension of Schwartzman's theorem (more in the spirit of Boyle and Lind's theorem) to any countable group and more generally to any second countable and locally compact group. Moreover, our extension is relative to a factor map.

 A factor map $\pi$ is {\em bounded to one} (resp. {\em constant to one}) if the cardinality of the $\pi$-fibers is uniformly bounded (resp. equals to a constant).  
 
\begin{theo}\label{theo:factors_general1}
Let $(X,T,G)$ and  $(Y,S,G)$ be topological dynamical systems, where $G$ is an  infinite group with a proper right-invariant distance, and let $\pi \colon X \to Y$ be a factor map. Then, either  
\begin{itemize} 
\item $\pi$ is bounded to one, 
\item or for any $\epsilon>0$
 there exist a horoball $H \in \cH$ and an $(\epsilon,H)$-asymptotic pair $(x,y) \in X^2$ relative to $\pi$. 
\end{itemize} 
In particular, when  $(Y,S,G)$ is trivial and $X$ is infinite, there exists an $(\epsilon,H)$-asymptotic pair $(x,y) \in X^2$. 
\end{theo}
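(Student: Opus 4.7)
\medskip
\noindent\textbf{Proof proposal.} The plan is to reduce the statement to a direct application of the Robinson Crusoe theorem (\cref{theo:SchwartzmannVersionAbstraite}) on the fiber product system. Concretely, I would consider the compact $G$-invariant subspace
\[
Z \;=\; \{(x,y) \in X\times X : \pi(x)=\pi(y)\} \;\subseteq\; X\times X,
\]
equipped with the diagonal action $\widetilde{T}_g = T_g \times T_g$. The diagonal $\Delta = \{(x,x) : x\in X\}$ is closed and $\widetilde{T}$-invariant in $Z$, so the set $O := Z \setminus \Delta$ is open and $G$-invariant, and its boundary $\partial O$ is contained in $\Delta$.

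The key step is to show that when $\pi$ is \emph{not} bounded to one, $O$ is non-empty and not closed in $Z$. Non-emptiness is immediate, since an unbounded fiber contains at least two distinct points. For non-closedness I would argue by compactness: if $\pi$ is not bounded to one, then for every $n\ge 1$ there is $y_n \in Y$ whose fiber $\pi^{-1}(y_n)$ contains at least $n$ points; since $X$ can be covered by finitely many $\epsilon$-balls for each fixed $\epsilon>0$, the pigeonhole principle yields two distinct points $x_n, y_n \in \pi^{-1}(y_n)$ with $\mathrm{dist}(x_n,y_n)\to 0$. The pairs $(x_n,y_n)$ then lie in $O$ and accumulate on $\Delta$, so $O$ is not closed in $Z$.

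Now fix $\epsilon>0$ and set
\[
U \;=\; \bigl\{(x,y)\in Z : \mathrm{dist}(x,y) < \epsilon \bigr\},
\]
which is an open neighborhood of $\Delta$ in $Z$, hence of $\partial O$. Applying \cref{theo:SchwartzmannVersionAbstraite} to the system $(Z,\widetilde{T},G)$, the open set $O$ and the neighborhood $U$ produces a horoball $H \in \cH$ together with a point $(x,y) \in O \cap \bigcap_{g\in H} \widetilde{T}_g^{-1}(U)$. By definition of $Z$, $O$ and $U$, this point satisfies $\pi(x)=\pi(y)$, $x\neq y$ and $\mathrm{dist}(T_g x, T_g y) < \epsilon$ for every $g\in H$, which is precisely an $(\epsilon,H)$-asymptotic pair relative to $\pi$.

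Finally, for the ``in particular'' statement, the trivial factor $Y = \{\ast\}$ makes the unique fiber equal to $X$ itself; if $X$ is infinite then $\pi$ is certainly not bounded to one, so the second alternative applies and yields the desired $(\epsilon,H)$-asymptotic pair. The main obstacle I anticipate is the verification of the non-closedness of $O$, since everything else is a bookkeeping translation into the hypotheses of \cref{theo:SchwartzmannVersionAbstraite}; the compactness/pigeonhole argument above is the core content.
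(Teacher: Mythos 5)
Your proposal is correct and follows essentially the same route as the paper: pass to the fiber product $R_\pi$ with the diagonal action, take $O = R_\pi \setminus \Delta_X$, observe $\partial O \subseteq \Delta_X$, and apply the Robinson Crusoe theorem with $U = \{(x,y): \mathrm{dist}(x,y) < \epsilon\}$. The only cosmetic difference is that you prove the contrapositive ($\pi$ not bounded to one $\Rightarrow O$ not closed, via pigeonhole) whereas the paper argues directly that $O$ closed $\Rightarrow$ the fibers are uniformly $\delta$-separated $\Rightarrow \pi$ is bounded to one by compactness of $X$.
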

Note that in the case of a trivial $(Y,S,G)$, \cref{theo:factors_general1} recovers \cref{thm_1_intro} announced in the introduction.

Interestingly, in  the context of symbolic dynamics, \cref{theo:factors_general1} provides the existence of  an asymptotic pair for any infinite subshift.  More precisely, for an infinite closed subset $X$ of $A^G$ (where $A$ is a finite set and $G$ is a countable group), invariant by the natural shift action $T$ induced by translations on $G$, there exists a horoball $H$ and two points $x \neq y \in X$ that coincide when restricted to $H$.    

Remark that \cref{theo:factors_general1} does not say that its items are mutually exclusive. In fact, many situations can arise: there exist factor maps that are bounded to one but do not identify asymptotic pairs ({e.g.}, consider an irrational rotation on the complex circle with the factor map $z \mapsto z^n$). 
 On the other hand, an example of an  unbounded to one factor map identifying asymptotic points is also given by a dynamic on the circle. For instance, consider a Denjoy's homeomorphism. It preserves a Cantor set $C$. It is standard that it has positively asymptotic pairs: namely the extreme points of a connected component of the complementary of $C$. Moreover, the homeomorphism factorizes onto an irrational rotation by a factor map identifying  these connected components. Finally, recall that there exist Denjoy's examples where the restrictions on the Cantor set $C$ provide  factor maps that are both bounded to one and identify the asymptotic pairs. 
 
Let us stress that  \cref{theo:factors_general1} is intrinsic to the system and does not depend on the embedding into another space with some constraints (like the dimension, connectivity, ...). This differs, for instance, from what the usual study of  minimal sets  of homeomorphisms (like the one of Denjoy homeomorphisms) would suggest.

\begin{proof}
Consider the system $(R_{\pi}, T^{(2)},G)$, where $R_\pi=\{(x,y)\in X\times X: \pi(x)=\pi(y)\}$ and $T^{(2)}$ is the diagonal action $T^{(2)}_g(x,y)= (T_g(x),T_g(y))$. Let $O=R_\pi\setminus\Delta_X$, where $\Delta_X$ is the diagonal on $X$. Clearly, $O$ is open and $G$-invariant. If $O$ is closed, then there exists $\delta>0$ such that ${\rm dist}(x,y) \geq \delta$ for any $(x,y) \in O$. Then the compactness of $X$  implies that $\pi$ is bounded to one. 

Now assume $O$ is not closed. This implies that $\emptyset \neq \partial O \subseteq \Delta_X$. For any $\epsilon >0$, set $U= \{ (x,y) \colon {\rm dist}(x,y) < \epsilon \}$ an open neighborhood of $\partial O$. 
By \cref{theo:SchwartzmannVersionAbstraite}, for any $\epsilon>0$ there exist a horoball $H$ in $\cH$ and different points $x,y \in X$ with $\pi(x)=\pi(y)$ such that for all $g \in H$ we have ${\rm dist} (T_{g}(x),T_{g}(y))< \epsilon$. 
\end{proof}

 Similarly, the application of the Directed Robinson Crusoe theorem provides a directed version of \cref{theo:factors_general1}. This is a second extension of Schwartzman's theorem. For expository reasons, we state a version which is valid for groups admitting non periodic elements and 
where the inverse map is an isometry, i.e.,  $d(x^{-1}, y^{-1}) = d(x,y)$. 
This is the case, for instance, for the $\ell_{1}$ distance of a finitely generated group. The inverse map is also an isometry for any distance $d$ that is {\em bi-invariant}, i.e.,  that is both left and right-invariant: $d(gxh,gyh) = d(x,y)$ for any $g,h,x,y \in G$. This occurs for instance for abelian groups.  

\begin{theo}\label{theo:factors_general}
	Let $\pi \colon X \to Y$ be a factor map between the topological dynamical systems $(X,T,G)$ and  $(Y,S,G)$, where $G$ is an infinite  group with a proper right-invariant distance and where the inverse map is an isometry. Assume that $\pi$ is not bounded to one. 
	Let $k \in G$ be an element generating an unbounded subgroup $\langle k \rangle$.  Then, for any $\epsilon >0$, there exists 
	a horoball $H \in \cH$ such that
	\begin{itemize} 
		\item the horoball  $H$ does  not contain $k$;
        \item there exists an  $(\epsilon,H)$-asymptotic pair $(x,y)$ relative to $\pi$.
	\end{itemize}
	\end{theo}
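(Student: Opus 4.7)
The plan is to apply the Directed Robinson Crusoe theorem (\cref{thm:directed_RC}) to the pair system $(R_\pi,T^{(2)},G)$, with $R_\pi=\{(x,y)\in X^2:\pi(x)=\pi(y)\}$, diagonal action $T^{(2)}_g(x,y)=(T_gx,T_gy)$, open invariant subset $O=R_\pi\setminus\Delta_X$ (not closed, because $\pi$ is not bounded to one, as in the proof of \cref{theo:factors_general1}), and boundary neighborhood $U=\{(x,y)\in R_\pi:\dist(x,y)<\epsilon\}$. A well-chosen auxiliary set $G_0\subseteq G$ will ensure that the horoball supplied by the theorem automatically excludes $k$.

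First I would note that right-invariance combined with the inversion isometry forces $d$ to be bi-invariant, since $d(fg,fh)=d((fg)^{-1},(fh)^{-1})=d(g^{-1}f^{-1},h^{-1}f^{-1})=d(g^{-1},h^{-1})=d(g,h)$. I then set
\[
G_0=\{g\in G:d(g,k^{-1})<d(g,1_G)\},
\]
so that by inversion isometry $G_0^{-1}=\{g:d(g,k)<d(g,1_G)\}$ and $G\setminus G_0^{-1}=\{g:d(g,k)\ge d(g,1_G)\}$. Writing $a_n=d(k^{-n},1_G)$, the unboundedness of $\langle k\rangle$ gives $a_n\to\infty$, and bi-invariance gives $|a_n-a_{n-1}|\le d(k,1_G)$; hence $a_n$ cannot be eventually non-increasing, so infinitely many $n$ satisfy $a_{n-1}<a_n$, equivalently $k^{-n}\in G_0$. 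By symmetry $G\setminus G_0^{-1}$ is also unbounded. Furthermore, for any $j=\lim_n b_{g_n}\in\partial[G\setminus G_0^{-1}]$ with $g_n\in G\setminus G_0^{-1}$, each $b_{g_n}(k)=d(g_n,k)-d(g_n,1_G)\ge 0$, so $j(k)\ge 0$; thus $k$ lies outside every horoball in $\cH_{\partial[G\setminus G_0^{-1}]}$.

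Next I would exhibit a finite $\tilde S\subseteq\bigcap_{H\in\cH_{\partial G_0}}H$ for which $O$ is not pointwise $\tilde S$-repulsed by $\partial O\subseteq\Delta_X$. The natural candidate is $\tilde S=\{k^{-N}\}$ for a suitably large $N$ (lying deep on the $k^{-1}$ side of the Voronoi cut), possibly after replacing $G_0$ by a quantitatively strengthened variant to guarantee the strict inclusion $j(k^{-N})<0$ for every $j\in\partial G_0$ while preserving the unboundedness arguments above. For the non-repulsion itself, I would apply a Schwartzman-type directional result to the $\langle k\rangle\cong\Z$-subaction of $(X,T,G)$ (the factor map $\pi$ remains not bounded to one, having the same fibers), producing a pair $(x,y)\in R_\pi\setminus\Delta_X$ with $\dist(T_{k^{-n}}x,T_{k^{-n}}y)<\delta$ for all $n\ge 0$; the singleton $F=\{(x,y)\}$ then satisfies $\dist(T^{(2)}_gF,\partial O)<\delta$ for the infinitely many $g\in\langle\tilde S\rangle_+=\{k^{-Nn}:n\ge 0\}$.

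Finally, \cref{thm:directed_RC} produces $j\in\partial[G\setminus G_0^{-1}]$ whose horoball $H$ satisfies $O\cap\bigcap_{g\in H}(T^{(2)}_g)^{-1}(U)\ne\emptyset$; any element is a pair $(x,y)\in R_\pi\setminus\Delta_X$ with $\dist(T_gx,T_gy)<\epsilon$ for all $g\in H$, which is the required $(\epsilon,H)$-asymptotic pair relative to $\pi$, and $k\notin H$ by construction. The main obstacle is the non-repulsion step: the hypothesis that $\pi$ is not bounded to one is a global cardinality statement, whereas non-repulsion is a pointwise orbit property of a single pair. Bridging this gap requires a directional Schwartzman-type asymptotic pair theorem for the $\Z$-subaction generated by $k$, specifically oriented toward $k^{-1}$, which classically follows from an additional application of the non-directed \cref{theo:SchwartzmannVersionAbstraite} exploiting the inherent symmetry between past and future for $\Z$-actions.
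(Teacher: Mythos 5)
Your overall architecture is exactly the paper's: the pair system $(R_\pi,T^{(2)},G)$ with $O=R_\pi\setminus\Delta_X$, the Voronoi-type set $G_0=\{g: d(g,k^{-1})<d(g,1_G)\}$ (the paper's $G_k$), the verification that $G_0$ and its complement meet $\langle k^{-1}\rangle_+$ and $\langle k\rangle_+$ in unbounded sets, and the use of the inversion isometry to convert $g_n^{-1}\notin G_0$ into $b_{g_n}(k)\ge 0$ and hence $k\notin H$. All of that is correct, and your observation that the hypotheses force bi-invariance is a clean way to justify the unboundedness computation that the paper only asserts.

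The genuine gap is the non-repulsion step, which you yourself flag as the main obstacle. Your plan is to produce a \emph{single} pair $(x,y)\in O$ asymptotic along $\{k^{-n}:n\ge 0\}$ by invoking a directional relative Schwartzman theorem for the $\Z$-subaction generated by $k$, and you claim this ``classically follows from an additional application of the non-directed \cref{theo:SchwartzmannVersionAbstraite} exploiting the inherent symmetry between past and future for $\Z$-actions.'' This does not work: \cref{theo:SchwartzmannVersionAbstraite} (equivalently \cref{theo:factors_general1}) applied to the $\Z$-action of $T_{k^{-1}}$ only yields an asymptotic pair along \emph{some} unspecified horoball of $\Z$, i.e.\ along one of the two half-lines, and applying it to the inverse homeomorphism merely swaps which half-line is uncontrolled. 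Obtaining the \emph{prescribed} orientation in the relative setting is precisely the content of \cref{theo:factors_general} for $G=\Z$, so your bridge is circular (and there is no past--future symmetry to exploit once a factor map or an invariant open set is fixed, as \cref{rem:NSex} already illustrates). Moreover, the detour is unnecessary: the definition of ``not pointwise $\tilde S$-repulsed'' only asks, for each $\delta$, for a \emph{finite} set $F\subseteq O$ such that for infinitely many $g\in\langle\tilde S\rangle_+$ \emph{some} element of $F$ lands $\delta$-close to $\partial O$ --- it does not require one pair that works for all $g$. The paper gets this with a purely static pigeonhole argument: choose $\delta\mapsto\epsilon$ as in your second paragraph, cover $X$ by $m$ balls of diameter $\epsilon/2$, pick $m+1$ distinct points in a single $\pi$-fiber (possible since $\pi$ is not bounded to one), and let $F$ be the set of all pairs $(x_i,x_j)$, $i\ne j$; then for \emph{every} $g\in G$ two of the images $T_g(x_i)$ share a ball, so some pair of $F$ is sent $\delta$-close to $\Delta_X\supseteq\partial O$. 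With this replacement (and $\tilde S=\{k^{-1}\}$, for which the paper takes the strict inequality $j(k^{-1})<0$ for granted --- your worry about strictness is legitimate for non-integer-valued metrics, and your ``quantitatively strengthened $G_0$'' is the right kind of fix, though you leave it unspecified), your argument closes and coincides with the paper's proof.
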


Observe that the first item implies a strong restriction on the horoballs. For instance, if $G= \Z^2$, the set of $\ell_2$ horoballs that do not contain a given point $k\in \Z^2$ forms a half circle of directions when identified with their normal outgoing vectors (see \cref{sec:closedness} for further discussion). 

Also, notice that following the proof (given below), we can relax the hypothesis on the isometric action of the inverse map
by only considering right-invariant distances. But the price to pay is to get the following more intricate statement.

\begin{theo}[\cref{theo:factors_general} general statement]\label{theo:factors_general2}
Let $\pi \colon X \to Y$ be a factor map between the topological dynamical systems $(X,T,G)$ and $(Y,S,G)$, where $G$ is an infinite group with a proper right-invariant distance. Assume that $\pi$ is not bounded to one. Let $k \in G$ be an element generating an unbounded subgroup $\langle k \rangle$. 
Then, for any $\epsilon >0$, there exists an unbounded  sequence $(g_n)_{n\in \N}$ in $G$ such that 
	\begin{enumerate}
        \item the horoball $\displaystyle H'=\{\lim_{n\to \infty} b_{g_n^{-1}}<0 \}$ does not contain $k$;  
	    \item the horoball $\displaystyle H=\{ \lim_{n\to \infty} b_{g_n}<0 \}$  admits an  $(\epsilon,H)$-asymptotic pair $(x,y)$ relative to $\pi$. 
	\end{enumerate}
\end{theo}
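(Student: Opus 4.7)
The plan is to apply the directed Robinson Crusoe theorem (\cref{thm:directed_RC}) to the diagonal system $(R_\pi, T^{(2)}, G)$, where $R_\pi = \{(x,y) \in X^2 : \pi(x)=\pi(y)\}$ carries the diagonal action, with the invariant open set $O = R_\pi \setminus \Delta_X$ and the neighborhood $U = \{(x,y) : \dist(x,y)<\epsilon\}$ of $\partial O \subseteq \Delta_X$. As in the proof of \cref{theo:factors_general1}, the hypothesis that $\pi$ is not bounded to one forces $O$ to be open, $G$-invariant, and not closed. The whole difficulty is to choose the auxiliary cone $G_0$ so that the output horoball $H = \{\lim b_{g_n}<0\}$ carries an asymptotic pair while the companion $H' = \{\lim b_{g_n^{-1}}<0\}$ avoids $k$.

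For a small parameter $\eta>0$, I would set $G_0 = \{h \in G : b_h(k) \leq -\eta\}$. Both $G_0$ and $G\setminus G_0$ are unbounded: the unboundedness of $\langle k \rangle$ together with compactness of the horofunction space forces $b_\cdot(k)$ to reach arbitrarily negative values along a cone in the $k$-direction, while transverse directions keep it near $0$. Any horofunction $j \in \partial G_0$ inherits $j(k) \leq -\eta < 0$, so the finite set $\tilde S = \{k\}$ lies in $\bigcap_{H \in \cH_{\partial G_0}} H$. For the non-repulsion hypothesis, I invoke \cref{theo:factors_general1} on the $\Z$-subaction $(X, T_k, \Z)$ with the factor $\pi$ (which remains not bounded to one), obtaining for each $\delta>0$ a pair $(x_\delta, y_\delta) \in R_\pi \setminus \Delta_X$ asymptotic along either the positive or the negative half-line of $\Z$; in the positive case $F = \{(x_\delta, y_\delta)\}$ directly witnesses non-repulsion of $O$ along $\tilde S = \{k\}$, while the negative case is handled by replacing $k$ with $k^{-1}$ throughout the choice of $G_0$ and $\tilde S$.

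Once the hypotheses are verified, \cref{thm:directed_RC} produces a sequence $(g_n) \subseteq G \setminus G_0^{-1}$ going to infinity, with $b_{g_n}$ converging to some $j$, such that $H=\{j<0\}$ admits an $(\epsilon,H)$-asymptotic pair relative to $\pi$. Extracting a further subsequence so that $b_{g_n^{-1}}$ converges to $j'$, the defining condition $g_n^{-1} \notin G_0$ reads $b_{g_n^{-1}}(k) > -\eta$, which passes to the limit as $j'(k) \geq -\eta$. A diagonal extraction along a sequence $\eta_m \to 0$, using the relative compactness of both the horofunction space and of $X^2$, then yields a single limiting sequence $(g_n)$ with $\lim b_{g_n} = j$, $\lim b_{g_n^{-1}} = j'$, $j'(k) \geq 0$, and $(x,y)$ asymptotic along $H$, delivering $k \notin H'$.

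The main obstacle I foresee lies in the final diagonal passage: at each $\eta_m$ the asymptotic pair $(x^{(m)}, y^{(m)})$ is only $\epsilon$-close with no \emph{uniform} lower bound on $\dist(x^{(m)}, y^{(m)})$, so a naive limit could collapse into $\Delta_X$. I expect the cleanest remedy is to revisit the proof of \cref{thm:directed_RC} and thread the parameter $\eta_m$ directly through the inf--sup argument that defines the constant $d_0$, so that the contradiction is driven concurrently in $\epsilon$ and $\eta$ inside one single compactness step, rather than invoking the theorem as a black box; alternatively, the Schwartzman pair $(x_0, y_0)$ produced by \cref{theo:factors_general1} has a fixed non-zero separation and an $\epsilon$-asymptoticity built along $\langle k \rangle_+$, and can serve as a baseline to keep the limiting pair off the diagonal.
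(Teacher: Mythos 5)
Your global strategy --- running the directed Robinson Crusoe theorem on $(R_\pi,T^{(2)},G)$ with $O=R_\pi\setminus\Delta_X$ and a cone $G_0$ pointing towards $k$ --- is exactly the paper's, but the specific choice $G_0=\{h\in G: b_h(k)\le-\eta\}$ creates the two gaps that then sink the argument. First, the unboundedness of this $G_0$ is asserted, not proved: for a general real-valued proper right-invariant metric the increments $b_{k^n}(k)=d(1_G,k^{n-1})-d(1_G,k^n)$ can tend to $0$ even though $\langle k\rangle$ is unbounded, so for a fixed $\eta$ the set $\{h: b_h(k)\le-\eta\}$ need not contain any unbounded subset of $\langle k\rangle$, and nothing in your compactness remark produces points of $G_0$ elsewhere. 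Second, and more seriously, the margin $\eta$ is precisely what forces your final diagonal extraction: \cref{thm:directed_RC} only returns $b_{g_n^{-1}}(k)>-\eta$, so you must let $\eta\to0$, and you yourself observe that the resulting asymptotic pairs $(x^{(m)},y^{(m)})$ may collapse into $\Delta_X$ in the limit. Your proposed remedies (re-threading $\eta_m$ through the inf--sup argument of \cref{thm:directed_RC}, or anchoring to a Schwartzman pair) are not carried out, so the proof is incomplete exactly at the step that distinguishes \cref{theo:factors_general2} from \cref{theo:factors_general1}.

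The paper avoids both problems by dropping the margin: it takes $G_0=G_k=\{g\in G: k^{-1}\in B_{d(1_G,g)}(g)\}=\{g: b_g(k^{-1})<0\}$. This set is unbounded because it contains the powers $k^{-n}$ at which $n\mapsto d(1_G,k^n)$ attains a new record value, and its complement is unbounded for the symmetric reason. The directed theorem then returns a single sequence $(g_n)$ with $g_n^{-1}\notin G_k$, i.e.\ $b_{g_n^{-1}}(k^{-1})\ge 0$ exactly for every $n$; this inequality passes to the limit horofunction $j'=\lim b_{g_n^{-1}}$ with no limit in $\eta$ and no second extraction, giving the exclusion in item (1) directly (with $k^{-1}$ in place of $k$, which is harmless since $\langle k\rangle=\langle k^{-1}\rangle$). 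Finally, your verification of non-repulsion via \cref{theo:factors_general1} applied to the $\Z$-subaction is an unnecessary detour whose ``negative case'' silently swaps $k$ and $k^{-1}$ and hence changes which horoball you exclude; the paper instead takes $m+1$ points in a single fiber and $m$ balls of diameter $\epsilon/2$ and applies the pigeonhole principle, which shows that $O$ fails to be pointwise repulsed along \emph{every} infinite semigroup simultaneously, with no case analysis.
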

Actually, we believe the correct general setting on the distance to get a similar conclusion is that the inverse map  $g \mapsto g^{-1}$ defines a coarse map (see \cite{CornulierHarpe}). But the statement  would be unnecessarily complicated for our purpose. 

\begin{proof}[Proof of \cref{theo:factors_general}]
The proof starts similarly to that of \cref{theo:factors_general1}. We keep the same notations: \ie, 
we  consider the system $(R_{\pi}, T^{(2)},G)$, where $R_\pi=\{(x,y)\in X\times X: \pi(x)=\pi(y)\}$ and $T^{(2)}$ is the diagonal action $T^{(2)}_g(x,y)= (T_g(x),T_g(y))$. The set  $O$ denotes $R_\pi\setminus\Delta_X$, where $\Delta_X$ is the diagonal in $X\times X$.
The set $O$ is open and  $G$-invariant and the same proof as in \cref{theo:factors_general1} shows it is not closed. 

Moreover, for any $\delta > 0$ there exists $\epsilon >0$ such that if $(x,y)\in O$ and ${\rm dist}(x,y)\leq \epsilon$ then ${\rm dist}((x,y),\partial O)\leq \delta$. Indeed, for the sake of a contradiction, assume this is not true. Taking a subsequence if needed, we may consider a sequence $((x_n,y_n))_{n\in \N}$ in $O$ with $(x_n,y_n)\to (z,z)$ when $n\to \infty$ for some $z\in X$ and ${\rm dist}((x_n,y_n),\partial O)> \delta$ for all $n\in \N$. But, by definition, $(z,z) \in \partial O$, which contradicts the fact that ${\rm dist}((x_n,y_n),\partial O)> \delta$.

Since the group $\langle k \rangle$ is unbounded, both semigroups  $\langle k \rangle_+$ and $\langle k^{-1} \rangle_+$ are unbounded. 
We claim that  $O$ is not pointwise $\{k^{-1}\}$-repulsed by its border.
Indeed, for $\delta>0$ let $\epsilon  >0$ be such that $(x,y)\in O$ and $d(x,y)\leq \epsilon$ implies $d((x,y),\partial O)\leq \delta$.  Let $B_1,\ldots, B_m$ be distinct open balls of diameter $\epsilon/2$ whose union covers $X$. 
Let $x_1,\ldots,x_{m+1}$ be distinct points with $\pi(x_1)=\pi(x_2)=\cdots=\pi(x_{m+1})$. Let $F=\{ (x_i,x_j): i\neq j \} \subseteq O$. Then, by the pigeonhole principle, for any $g\in \langle k^{-1} \rangle_+ \subseteq G$ there exist $i$ and $j$ such that $T_{g}(x_i)$ and $T_{g}(x_j)$ belong to a same ball $B_k$. This means that ${\rm dist}(T_g(x_i),T_g(x_j))\leq \epsilon$, which implies that $(T_g(x_i),T_g(x_j))$ is $\delta$-close to $\partial O$. The claim is proved. 
	
So the set $O$ is non pointwise $\{k^{-1}\}$-repulsed by its border. Set $G_k=\{ g\in G:  k^{-1}\in B_{d(1,g)}(g) \}$. It can be checked that $G_k$ and $G\setminus G_k$ are unbounded since they contain unbounded subsets of $\langle k^{-1} \rangle_+$ and $\langle k \rangle_{+}$ respectively. Moreover, by definition $k^{-1}$ belongs to $\bigcap_{H \in \cH_{\partial G_k}} H $. By the directed Robinson Crusoe theorem \ref{thm:directed_RC} applied to $O$ and $G_0=G_k$, for any $\epsilon>0$ there exist a horoball $H$ in $\cH_{\partial [G\setminus G_k^{-1}]} $ and different points $x,y \in X$ with $\pi(x)=\pi(y)$ such that for all $g \in H$ we have ${\rm dist} (T_{g}(x),T_{g}(y))< \epsilon$.
	
Write  $\displaystyle H=\{\lim_{n\to \infty} b_{g_n}<0\}$ for a sequence $(g_n)_{n\in \N}$ in $G\setminus G_k^{-1}$ going to infinity. Since $g_n^{-1} \notin G_k$ we have that  for all $n\in \N$, $k^{-1}\not\in B_{d(1,g_n)}(g_n^{-1})$.
Since the inverse map is an isometry, this means   that  $ k \not\in  B_{d(1,g_n)}(g_n)$. 
The proof is complete.
\end{proof}

The Robinson Crusoe theorems are valid for second countable, locally compact groups (like  $\R^d$ or Lie groups). 
But in some cases, they hold for trivial reasons. For instance, in the case of a $\R^d$-flow, the notion of $(\epsilon, H)$-asymptotic pair is trivial:  any non fixed  point with any small translation of it by the flow give an asymptotic pair. This problem already occurs in the definition of expansive flows (see \cite{BW}).
This leads to the following question:  
\begin{ques} Provide nontrivial versions of  \cref{theo:factors_general1}  and \cref{theo:factors_general} for flows of (connected)  second countable, locally compact groups actions. 
\end{ques}

\subsection{Structure of the set of asymptotic pairs}

The structure of the set of pairs that are $(\epsilon, H)$-asymptotic  for some horoball $H$ and $\epsilon >0$ depends very much on the particular dynamical system we are working with. It can be really small, for instance finite, and even restricted to only two orbits ({e.g.}, Sturmian subshifts, Denjoy examples of homeomorphism of the circle). Hence it does not form,  in general, a generic set in the topological or measurable frameworks. At the opposite, it can also be really large ({e.g.}, in the fullshift there is a Cantor set of such pairs).  

However, in some cases, the asymptotic pairs reflect features of the global structure of the ambient space.
For instance, for a product dynamical system, it is simple to check that  the set of asymptotic pairs also has a product structure. Less directly, we have the following result for spaces with nontrivial connected components. 

\begin{cor}\label{cor:lemIII} Let $(X,T,G)$ be a topological dynamical system with $\dim X >0$ and $G$ be an infinite   group with a proper right-invariant distance. Then, for any $\epsilon >0$ there exist a horoball $H$ in $G$ and  a connected set $F\subseteq X$ such that 
$0< \textrm{diam}(T_{g}(F)) < \epsilon$ for every $g \in H\cup\{1_{G}\}$. 
\end{cor}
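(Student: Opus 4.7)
The plan is to lift the problem to the hyperspace of continua and apply the Robinson Crusoe theorem (\cref{theo:SchwartzmannVersionAbstraite}).

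First I would introduce $C(X)$, the hyperspace of nonempty closed connected subsets of $X$ equipped with the Hausdorff metric. Since $X$ is compact metric, $C(X)$ is compact metric, and the formula $\tilde T_g(F) := T_g(F)$ defines a continuous $G$-action on $C(X)$ (joint continuity follows from the uniform continuity of $g \mapsto T_g$ in the uniform topology, itself a consequence of the compactness of $X$, together with the fact that each $T_g$ is a homeomorphism). I would then consider
\[O = \{F \in C(X) : \textrm{diam}(F) > 0\},\]
which is $G$-invariant since homeomorphisms preserve cardinality, and open since the diameter function is continuous on $C(X)$.

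Next I would check that $O$ and the neighborhood $U = \{F \in C(X) : \textrm{diam}(F) < \epsilon\}$ satisfy the hypotheses of Robinson Crusoe. Since $U$ is open and contains every singleton, it suffices to show that $\partial O$ is nonempty and contained in the set of singletons. The inclusion $\partial O \subseteq \{\text{singletons}\}$ is immediate from the definition of $O$. The nonemptiness follows from $\dim X > 0$: a positive-dimensional compact metric space is not totally disconnected, hence contains a non-degenerate continuum $K$, and for any $p \in K$ the components of $p$ in $K \cap \overline{B_r(p)}$ (for small $r$) form a sequence of elements of $O$ that converges to $\{p\}$ in the Hausdorff metric, exhibiting $\{p\} \in \overline{O} \setminus O$.

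Applying \cref{theo:SchwartzmannVersionAbstraite} then yields a horoball $H \in \cH$ and $F \in O$ with $\textrm{diam}(T_g(F)) < \epsilon$ for all $g \in H$. To also control the case $g = 1_G$, I would replace $F$ by a subcontinuum $F' \subseteq F$ with $0 < \textrm{diam}(F') < \epsilon$: picking any $p \in F$ and any radius $0 < r < \epsilon/2$ such that $F \not\subseteq \overline{B_r(p)}$, the Boundary Bumping theorem provides a subcontinuum of $F \cap \overline{B_r(p)}$ through $p$ meeting $\partial B_r(p)$, whose diameter lies in $[r, 2r]$. The inclusion $T_g(F') \subseteq T_g(F)$ together with the fact that $T_g$ is a homeomorphism (so $T_g(F')$ remains non-degenerate) then gives $0 < \textrm{diam}(T_g(F')) < \epsilon$ for every $g \in H \cup \{1_G\}$.

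The main obstacle is really the packaging: once the hyperspace $C(X)$ and the set $O$ are set up correctly, the corollary reduces to a direct application of Robinson Crusoe. The only genuine continuum-theoretic input is the existence of non-degenerate subcontinua of arbitrarily small positive diameter inside any continuum, a standard consequence of the Boundary Bumping theorem which is invoked both to show that $O$ is not closed and to produce the final subcontinuum $F'$.
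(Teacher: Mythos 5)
Your proof is correct and follows essentially the same route as the paper: pass to the hyperspace of nonempty subcontinua with the Hausdorff metric, let $O$ be the set of non-degenerate continua, and apply \cref{theo:SchwartzmannVersionAbstraite}. Your additional steps (the boundary-bumping argument showing $O$ is not closed, and the shrinking of $F$ to a small subcontinuum to cover the $g=1_G$ case) merely supply details that the paper's one-line proof leaves implicit.
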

In particular, this result shows, in the case of a dynamic on a connected set, that it admits a (stable) set 
$$W^s_{\epsilon, H}(x) = \{y \in X: \forall h \in H, \ \dist(T_h (x),T_h(y)) < \epsilon   \}$$ of positive topological dimension for some $x \in X$ and some horoball $H \in \cH$.  In differentiable dynamics, this leads to the meaningful notion of stable manifold.
Let us also point out that this result is the starting point in Ma\~n\'e's proof that the minimal components of expansive $\Z$-dynamics are zero-dimensional \cite{M}.  

\begin{proof}Let  $\cX$ denote the set of all closed connected  subsets of $X$. Since $X$ has positive dimension, basics on dimension theory imply that $\cX$ is nonempty. It is also classical that $\cX$ is compact for the  Hausdorff metric. 
Let $O\subseteq \cX$ denote the set of positive diameter connected subsets of $X$. It is clearly invariant, open and non closed. Then, \cref{theo:SchwartzmannVersionAbstraite} gives us immediately the desired conclusion.  
\end{proof}
 
\section{Set of nonexpansive horoballs for a $\Z^d$-action}\label{sec:closedness}

In \cref{sec:asymptotics} we have shown the existence of horoballs admitting $\epsilon$-asymptotic pairs. Here we try to obtain more information about the set of such horoballs. 
For this we follow  the same path initiated by Boyle and Lind in \cite{BD}. In their work the authors consider the notion of  {\em nonexpansive direction} which corresponds in our context to the border of horoballs admitting  $\epsilon$-asymptotic pairs for  the $\ell_2$ distance in $\Z^d$. In this case, horoballs are open half-spaces and their border are  hyperplanes, i.e., directions for $d=2$. Remark that by only considering the border of a half-space, we lose information about its orientation, as the half-space and its opposite share the same border. This is an important difference between our approach and that of nonexpansive directions. 

Boyle and Lind showed in \cite{BD} that the set of nonexpansive directions (under their definition) is a closed subset within the set of all the directions. Furthermore, their work, together with Hochmans's \cite{Hochman:2011} shows that any nonempty closed set can be realized as the set of nonexpansive direction of some $\Z^2$-action.

In this section we restrict ourselves to $\Z^d$-actions and we propose a notion of nonexpansive horoballs (see \cref{def:NDhoroball}, similar to the one of \cite{Z}).  The result  of \cite{BD} about closeness of the set of nonexpansive directions can be easily adapted to the context of $\ell_2$ horoballs. We use this fact to obtain a restriction on the set of nonexpansive  horoballs of a $\Z^d$-action: the intersection of all its nonexpansive horoballs has to be empty (\cref{cor:half_space_Z2}).  This  is an extension of Schwartzman's result.

\begin{defi}\label{def:NDhoroball} For  dynamical systems $(X,T,G)$ and  $(Y,S,G)$, a factor map $\pi \colon X \to Y$ and $\epsilon>0$, a  horoball $H$ is said to be $(\epsilon, \pi)$-nonexpansive if  there exist two different points $x,y \in X$ such that $\pi(x)= \pi(y)$ and ${\rm dist} (T_g x, T_gy) \leq \epsilon$ for all $g \in H$ (\ie, $(x,y)$ is an $(\epsilon,H)$-asymptotic pair relative to  $\pi$). 

When $Y$ is  the trivial system, we simply call the horoball $\epsilon$-nonexpansive.  
\end{defi}
At the opposite, a horoball $H$ that is not $(\epsilon, \pi)$-nonexpansive, satisfies the following property: for every $x,y$ such that $\pi(x) = \pi(y)$,
$$  \sup_{g \in H} {\rm dist}(T_g(x), T_g(y)) \le \epsilon   \text{ implies } x=y.
$$
In this sense, the horoball $H$ determines the points of $X$. For this reason, C. Zinoviadis in \cite{Z} uses the term {\em nondeterministic}  instead of nonexpansive. 

For  $\Z^d$ or $\R^d$-actions, we set the collection of   $(\epsilon,\pi)$-nonexpansive $\ell_2$ horoballs (identified with their normal outgoing vectors) as
\begin{align}\label{eq:NDdef}
\ND (X, \epsilon,  \pi) &=\{ v \in \S^{d-1}: \{x \in \R^d: \langle x,v\rangle < 0\} \textrm{ is } (\epsilon, \pi)\textrm{-nonexpansive}  \}, \\
   \ND (X, \pi) &= \bigcap_{\epsilon>0} \ND (X,\epsilon, \pi). 
\end{align}
Also, for the  trivial factor we simply denote these sets by  $\ND(X, \epsilon)$ and $\ND(X)$.
To avoid confusion between the notions, we refer to the elements of $\ND(X,\pi)$ (resp. $\ND(X)$) as {\em $\pi$-nondeterministic} (resp. {\em nondeterministic}) vectors.  
Similar arguments as the ones in \cite[Lemma 3.4]{BD} show the closeness of the set of nondeterministic vectors for the usual topology on the sphere $\S^{d-1}$.

\begin{lem}\label{lem:BLclosdness}
Let $(X,T, \Z^d)$ be a dynamical system and  $\pi \colon X\to Y$ a factor map onto the system $(Y,S, \Z^d)$. Then, for any $\epsilon >0$ the sets $\ND(X,\epsilon,\pi)$ and  $\ND(X,\pi)$ are  closed subsets of the unit sphere.
\end{lem}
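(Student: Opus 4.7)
Since $\ND(X,\pi) = \bigcap_{\epsilon > 0} \ND(X,\epsilon,\pi)$ is an intersection of the sets $\ND(X,\epsilon,\pi)$, its closedness follows from the closedness of each $\ND(X,\epsilon,\pi)$. I therefore focus on proving the latter, adapting the strategy of \cite[Lemma 3.4]{BD} to the factor-map setting.

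Let $(v_n)_{n \in \N} \subseteq \ND(X,\epsilon,\pi)$ converge to $v \in \S^{d-1}$, and denote $H_n = \{x \in \R^d : \langle x, v_n\rangle < 0\}$ and $H = \{x \in \R^d : \langle x, v\rangle < 0\}$. For each $n$, fix a witness pair $(x_n, y_n) \in R_\pi \setminus \Delta_X$ with $\dist(T_g x_n, T_g y_n) \leq \epsilon$ for every $g \in H_n$. After extracting a convergent subsequence, the limit $(x,y) \in R_\pi$ satisfies $\pi(x) = \pi(y)$ by closedness of $R_\pi$, and $\dist(T_g x, T_g y) \leq \epsilon$ for every $g \in H$: indeed, the convergence $v_n \to v$ forces $g \in H_n$ for all large $n$, so the inequality passes to the limit by continuity of $T_g$. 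The only remaining obstruction is that $(x,y)$ could a priori lie on the diagonal $\Delta_X$.

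To rule out this collapse, translate each witness along its $\Z^d$-orbit. For each $n$, choose $g_n \in H_n$ approximately maximizing $g \mapsto \dist(T_g x_n, T_g y_n)$ over $H_n$, and replace $(x_n, y_n)$ by $(T_{g_n} x_n, T_{g_n} y_n)$. Since $\langle g_n, v_n\rangle < 0$, the translated constraint half-space $H_n - g_n = \{x : \langle x, v_n\rangle < -\langle g_n, v_n\rangle\}$ strictly contains $H_n$, so the new pair remains $(\epsilon, H_n)$-asymptotic and the previous limit argument still applies. If in addition each initial $(x_n, y_n)$ is chosen to maximize $\dist$ among all $(\epsilon, H_n)$-asymptotic pairs (a maximum attained by compactness of $R_\pi$ and closedness of the asymptoticity condition), the translated distances admit a uniform lower bound $c > 0$, and the subsequential limit then satisfies $\dist(x,y) \geq c > 0$, hence $x \neq y$, witnessing $v \in \ND(X,\epsilon,\pi)$.

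The main obstacle is precisely the uniform lower bound $c > 0$ on the translated distances. Without it, the subsequential limit could collapse onto $\Delta_X$ and the argument would fail. The translation trick is designed exactly to prevent this: by shifting each witness to where its orbit achieves the largest separation allowed by the constraint $H_n$, one exploits the continuous dependence of $H_n$ on $v_n$ together with a compactness argument akin to that underlying the Robinson Crusoe theorem to ensure the extremal distances cannot all tend to zero as $n \to \infty$.
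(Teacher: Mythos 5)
Your overall skeleton is the right one (and is the one the paper follows): pass to a convergent subsequence of witnesses and fight the only real enemy, namely the limit pair collapsing onto the diagonal, by first translating each witness along its orbit so that the separation at the identity is uniformly bounded below. However, the mechanism you propose for the normalization does not work, and you yourself flag the missing step: the uniform lower bound $c>0$ is exactly what is never proved. Translating $(x_n,y_n)$ to a point $g_n\in H_n$ where the separation is (approximately) maximal over $H_n$ produces a pair whose separation at the identity equals $c_n=\sup_{g\in H_n}\dist(T_gx_n,T_gy_n)$, and there is no reason for $c_n$ to be bounded away from $0$: a pair in a fiber can be genuinely distinct while being arbitrarily close along all of $H_n$ (think of two configurations of a subshift differing only at sites far on the other side of the hyperplane). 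Choosing the initial $(x_n,y_n)$ to maximize $\dist(x_n,y_n)$ over all $(\epsilon,H_n)$-asymptotic pairs does not repair this, since that maximum controls the separation at the identity (which is \emph{not} in the open half-space $H_n$) and gives no lower bound on the supremum over $H_n$; if anything, maximality gives $\dist(T_{g_n}x_n,T_{g_n}y_n)\le\dist(x_n,y_n)$, an upper bound. The appeal to ``a compactness argument akin to that underlying the Robinson Crusoe theorem'' is not an argument here.

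The paper closes this gap by translating in the opposite direction. First it isolates the degenerate case: if some pair of distinct points $x\neq y$ with $\pi(x)=\pi(y)$ satisfies $\dist(T_gx,T_gy)\le\epsilon$ for \emph{all} $g\in\Z^d$, then every half-space is $(\epsilon,\pi)$-nonexpansive and $\ND(X,\epsilon,\pi)=\S^{d-1}$ is closed. Otherwise, every witness pair for $v_n$ is separated by more than $\epsilon$ at some $\tilde g_n$, necessarily with $\langle \tilde g_n,v_n\rangle\ge 0$; choosing $\tilde g_n$ minimizing the distance to $H_n$ among such sites and replacing $(x_n,y_n)$ by $(T_{\tilde g_n}x_n,T_{\tilde g_n}y_n)$ yields a pair that is still $\epsilon$-close on all of $H_n$ (by minimality of $\tilde g_n$, since for $g\in H_n$ the site $g+\tilde g_n$ is strictly closer to $H_n$) and is separated by more than $\epsilon$ at the identity. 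The uniform lower bound is then simply $\epsilon$, and your limiting argument goes through verbatim. So the proposal is not complete as written: the normalization step must be replaced by this one (or an equivalent), since the step you describe can fail.
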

For the reader's convenience, we provide a  proof that $\ND(X,\epsilon,\pi)$ is closed.
\begin{proof} 
First, we assume that for any pair of different points $x, y$ such that $\pi(x) = \pi(y)$, there exists a $g \in \Z^d$ such that $\dist (T_g(x), T_g(y))> \epsilon$. 
Otherwise we have $\ND(X,\epsilon,\pi)= \S^{d-1}$  and there is nothing to prove.

For $v \in \ND(X, \epsilon, \pi)$, there are two points $x,y$ such that $\pi(x) = \pi(y)$, 
 and $\dist(T_g(x), T_g(y))< \epsilon$, for any $g$ in the half-space $H_v = \{g: \langle v, g \rangle<0\}$. By the initial assumption, we may pick $\tilde{g}$, minimizing the distance to $H_v$  such that $\dist (T_{\tilde{g}}(x), T_{\tilde{g}}(y))> \epsilon$. Hence  $\dist (T_{g\tilde{g}}(x), T_{g\tilde{g}}(y))< \epsilon$ for any $g \in H_v$.

We deduce the next characterization:
$v \in \ND(X, \epsilon, \pi)$ if and only if there exist  $x,y \in X$ such that, $\pi(x) = \pi(y),  \dist(x,y) >\epsilon$, 
 and $\dist(T_g(x), T_g(y))< \epsilon$, for any $g$ such that $\langle v, g \rangle<0$. 
 Note that  the converse part is trivial.
From this characterization, it is straightforward to check that $\ND(X, \epsilon, \pi)$ is closed. 

The closeness of  $\ND(X, \pi)$ follows from that of $\ND(X,\epsilon, \pi)$  since it is  the intersection of a nested sequence of such sets.
 \end{proof}

More interestingly, \cref{theo:factors_general1} provides conditions that ensure the nonemptiness of the set $\ND(X, \epsilon, \pi)$ for every $\epsilon>0$. The former argument then implies that the set $\ND(X, \pi)$ is nonempty. This shows in particular that the horoball given by \cref{theo:factors_general1} may be taken uniformly in $\epsilon>0$ for the $\ell_2$ distance. This leads to the following notion of { nonexpansivity along a subset} (similar to \cite{BD}), which becomes relevant when considering horoballs.

\begin{defi}
A topological dynamical system $(X,T,G)$ with $G$ countable is said to be expansive along a subset $H$ if there exists $\epsilon_0>0$ such that 
$$\sup_{g \in H}  {\rm dist} (T_gx,T_gy)\le  \epsilon_0 \text{ implies } x= y.$$ If $H$ fails to meet this condition, the system is said to be non $H$-expansive.  
\end{defi}
Observe that this notion is meaningless in the context of $\R^d$ flows  since any  pair of points $(x,T_\epsilon x)$ for small enough $\epsilon>0$ shows that such flow is non $H$-expansive for any horoball. 

With this definition \cref{theo:factors_general1} can be stated as follows (in the spirit of Schwartzman's result). 
\begin{cor}
Let $(X,T,\Z^d)$ be an infinite topological dynamical system. Then, it is non $H$-expansive for some $\ell_2$ horoball $H$.   
\end{cor}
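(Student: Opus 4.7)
The plan is to promote \cref{theo:factors_general1} (applied with the trivial factor) into a single horoball that witnesses non-expansivity uniformly in $\epsilon$. Since every $\ell_2$ horoball in $\Z^d$ is an open half-space determined by an outward unit normal $v \in \S^{d-1}$, the conclusion is equivalent to producing a vector
$$v \in \ND(X) = \bigcap_{\epsilon > 0} \ND(X, \epsilon).$$

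First, I would apply \cref{theo:factors_general1} with $(Y,S,G)$ trivial: since $X$ is infinite, for every $\epsilon > 0$ there is a horoball $H$ and an $(\epsilon, H)$-asymptotic pair. In the $\ell_2$ setting this yields some $v \in \ND(X, \epsilon)$, hence $\ND(X, \epsilon) \neq \emptyset$ for every $\epsilon > 0$. Next, I would observe that the family $\{\ND(X, \epsilon)\}_{\epsilon > 0}$ is monotone decreasing in $\epsilon$, since any $(\epsilon', H)$-asymptotic pair with $\epsilon' \le \epsilon$ is also $(\epsilon, H)$-asymptotic. Combined with the closedness provided by \cref{lem:BLclosdness} and the compactness of $\S^{d-1}$, the finite intersection property of a nested family of nonempty closed sets yields $\ND(X) \neq \emptyset$.

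Fixing any $v \in \ND(X)$ and setting $H = \{x \in \Z^d : \langle x, v\rangle < 0\}$, for every $\epsilon_0 > 0$ there exist distinct $x, y \in X$ with $\dist(T_g x, T_g y) \le \epsilon_0$ for all $g \in H$. This is exactly the negation of $H$-expansivity for $(X, T, \Z^d)$, completing the argument. The only substantive difficulty is the $a\ priori$ dependence of the horoball produced by the Robinson Crusoe machinery on the scale $\epsilon$; this is resolved cleanly by the closedness of $\ND(X, \epsilon)$ together with compactness of the unit sphere, and no further obstacle arises.
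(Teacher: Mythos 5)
Your proposal is correct and follows essentially the same route as the paper: the corollary is obtained exactly by combining \cref{theo:factors_general1} (which gives $\ND(X,\epsilon)\neq\emptyset$ for every $\epsilon>0$) with the closedness from \cref{lem:BLclosdness}, the nestedness of the family $\{\ND(X,\epsilon)\}_{\epsilon>0}$, and the compactness of $\S^{d-1}$ to conclude $\ND(X)\neq\emptyset$. Any $v\in\ND(X)$ then yields the desired nonexpansive $\ell_2$ horoball, just as you argue.
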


In addition, the closeness property together with \cref{theo:factors_general} (the directed version on the existence of asymptotic pairs) enables us to guarantee the existence of a nonexpansive horoball (for $\Z^d$ systems) within any given half-space. 
\begin{theo} \label{theo:half_space_nonexpansive} 
Let $\pi \colon X \to Y$ be a factor map between the topological dynamical systems $(X,T,\Z^d)$ and  $(Y,S,\Z^d)$. Assume that $\pi$ is not bounded to one. Then,  any closed  half-space $H\subseteq \R^d$ contains a $\pi$-nondeterministic vector, \ie, 
$$\ND(X,\pi)\cap H \neq \emptyset.$$
\end{theo}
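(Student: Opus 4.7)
Let $u \in \S^{d-1}$ denote the unit outgoing normal of the closed half-space $H$, so that $H \cap \S^{d-1} = \{v \in \S^{d-1} : \langle v, u \rangle \le 0\}$. My goal is to produce a $\pi$-nondeterministic vector $v^* \in \S^{d-1}$ with $\langle v^*, u \rangle \le 0$. The strategy is to construct $v^*$ as a limit of vectors delivered by the directed version of Schwartzman's theorem (\cref{theo:factors_general}), and then to promote this limit to a member of $\ND(X,\pi)$ via the closedness property \cref{lem:BLclosdness}.

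Recall that on $\Z^d$ equipped with the $\ell_2$ distance (which is bi-invariant and proper, so in particular inversion is an isometry and the hypotheses of \cref{theo:factors_general} are in force), the horoballs are exactly the open half-spaces $H_v = \{x \in \R^d : \langle x, v \rangle < 0\}$ parameterized by $v \in \S^{d-1}$, and $H_v$ fails to contain $k \in \Z^d$ precisely when $\langle k, v \rangle \ge 0$. Choose any sequence $(k_n)_{n\in\N}$ of nonzero lattice vectors with $k_n/\|k_n\|_2 \to -u$; each $k_n$ generates an unbounded cyclic subgroup, so applying \cref{theo:factors_general} (using the standing assumption that $\pi$ is not bounded-to-one) with $k=k_n$ and $\epsilon=1/n$ yields a unit vector $v_n \in \ND(X,1/n,\pi)$ satisfying $\langle k_n, v_n \rangle \ge 0$, equivalently $\langle k_n/\|k_n\|_2,\, v_n\rangle \ge 0$.

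By compactness of $\S^{d-1}$ I pass to a subsequence with $v_n \to v^* \in \S^{d-1}$. Letting $n \to \infty$ in the above inequality yields $\langle -u, v^* \rangle \ge 0$, hence $v^* \in H$. To verify $v^* \in \ND(X,\pi)$, fix $\epsilon>0$: for all $n \ge 1/\epsilon$ one has $v_n \in \ND(X,1/n,\pi) \subseteq \ND(X,\epsilon,\pi)$, and \cref{lem:BLclosdness} asserts that $\ND(X,\epsilon,\pi)$ is closed, whence $v^* \in \ND(X,\epsilon,\pi)$. Intersecting over $\epsilon>0$ gives $v^* \in \ND(X,\pi) \cap H$.

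The only (mild) subtlety is handling a possibly irrational direction $u$, which forces the lattice-approximation step $k_n/\|k_n\|_2 \to -u$ rather than a single application of \cref{theo:factors_general} with $k=-u$; apart from that, the proof is a straightforward diagonal limit, with the two closedness-type ingredients doing all the work, namely closedness of $\ND(X,\epsilon,\pi)$ in \cref{lem:BLclosdness} and the fact that the half-space constraint $\langle k_n, v_n\rangle \ge 0$ survives under limits precisely because $H$ itself is closed.
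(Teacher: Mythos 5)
Your proof is correct and takes essentially the same approach as the paper: both apply the directed version (\cref{theo:factors_general}) along a sequence of lattice vectors approximating the desired direction, pass to a subsequential limit on the sphere, and then invoke the closedness from \cref{lem:BLclosdness} to land in $\ND(X,\pi)\cap H$. The only cosmetic difference is that you run $\epsilon=1/n$ simultaneously with $k=k_n$ and diagonalize in one pass, whereas the paper first fixes $\epsilon$, takes the limit to get a point of $\ND(X,\epsilon,\pi)\cap H$, and then lets $\epsilon\to 0$ using closedness a second time.
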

When $X$ is infinite and $Y$ is the trivial factor, we obtain that any closed half-space contains a nondeterministic vector.
\begin{proof}
We endow $\Z^d$ with the $\ell^2$ distance. Let $H\subseteq \R^d$ be a closed half-space and write $H=\{ x \in \R^d : \langle x, c\rangle \geq 0\}$ for some $c\in \R^d$. Let $(g_n)_{n\in \N}$ be a sequence going to infinity such that the distance of $g_n$ to the ray  $\{tc: t\in \mathbb{R}_+ \}$ goes to 0. Let $\cH_n$ denote the set of horoballs that do not contain $g_n$. Identifying a horoball with its outward unit normal vector, we may identify $\cH_n$ with the set $S_n=\{v \in \S^{d-1} : \langle g_n ,v \rangle \geq 0 \}$. Let $\epsilon>0$. 
By \cref{theo:factors_general}, for each $n\in \N$ we may find a vector $v_n \in \ND(X,\epsilon,\pi)\cap S_n$. Taking a subsequence if needed, we may assume that $v_n \to v$ as $n\to \infty$. Since $\ND(X,\epsilon,\pi)$ is closed by \cref{lem:BLclosdness}, we get $v\in \ND(X,\epsilon,\pi)$. We may write $g_n=t_n c + \epsilon_n$ with $\|\epsilon_n\|\to 0$ and $t_n\nearrow \infty$ as $n\to \infty$.  
From $\langle v_n,g_n\rangle \geq 0$ we obtain that $\langle v, c\rangle \geq 0$, and hence $v$ belongs to $H$. As $\epsilon>0$ is arbitrary, using again  \cref{lem:BLclosdness}, we obtain the desired conclusion.

\end{proof}
 
A special case of \cref{theo:half_space_nonexpansive}, for $d=2$ and $Y$ the trivial system, was announced by P. Guillon, J. Kari and C. Zinoviadis in 2015 \cite{GKZ}, but to the best of our knowledge, the proof was not communicated to the community. An immediate corollary of \cref{theo:half_space_nonexpansive} is the following restriction on the set of nonexpansive half-spaces. 
   
\begin{cor} \label{cor:half_space_Z2}
Let $\pi \colon X \to Y$ be a factor map between the topological dynamical systems $(X,T,\Z^d)$ and  $(Y,S,\Z^d)$. Assume that $\pi$ is not bounded to one. Then the origin belongs to the convex hull generated by the elements in $\ND(X,\pi)$.  In particular, the intersection of all the $\pi$-nonexpansive $\ell_2$ horoballs is empty:
$$ \bigcap_{v \in \ND(X)} \{y \in \R^d: \langle y,v \rangle <0   \} = \emptyset.$$
\end{cor}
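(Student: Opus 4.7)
The plan is to reduce the corollary to \cref{theo:half_space_nonexpansive} via a standard separation argument in $\R^d$, and then to observe that the second assertion (emptiness of the intersection) is a direct consequence of the first.

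First I would establish the convex-hull statement by contraposition. Suppose $0 \notin \mathrm{conv}(\ND(X,\pi))$. Since $\ND(X,\pi) \subseteq \S^{d-1}$ is closed (by \cref{lem:BLclosdness}) and hence compact, its convex hull is a compact convex subset of $\R^d$ that does not contain $0$. By the Hahn--Banach separation theorem (in its elementary finite-dimensional form), there exists $c \in \R^d$ and $\alpha >0$ such that $\langle v, c \rangle \geq \alpha$ for every $v \in \mathrm{conv}(\ND(X,\pi))$, and in particular for every $v \in \ND(X,\pi)$. Then the closed half-space $H = \{x \in \R^d : \langle x, c \rangle \leq 0\}$ is disjoint from $\ND(X,\pi)$, contradicting \cref{theo:half_space_nonexpansive}. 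Hence $0 \in \mathrm{conv}(\ND(X,\pi))$.

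Next I would deduce the emptiness of the intersection. Given the convex-hull conclusion, write $0 = \sum_{i=1}^{k} \lambda_i v_i$ with $\lambda_i > 0$, $\sum_i \lambda_i = 1$ and $v_i \in \ND(X,\pi)$. For any $y \in \R^d$ one obtains
\[
0 \;=\; \Bigl\langle y, \sum_{i=1}^{k} \lambda_i v_i \Bigr\rangle \;=\; \sum_{i=1}^{k} \lambda_i \langle y, v_i \rangle,
\]
so at least one summand satisfies $\langle y, v_i \rangle \geq 0$. Thus $y$ fails to lie in the horoball $\{x : \langle x, v_i \rangle <0\}$ for that index $i$, and therefore $y \notin \bigcap_{v \in \ND(X,\pi)} \{x \in \R^d : \langle x, v \rangle < 0\}$. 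Since $y$ was arbitrary, the intersection is empty.

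I expect the main (mild) obstacle to be a careful bookkeeping of the conventions: the set $\ND(X,\pi)$ lives on the unit sphere while the horoballs and the separation argument take place in $\R^d$, and one must check that strict versus non-strict inequalities line up correctly (this is why the separating functional from the compact convex set can be taken strict, giving $\langle v, c \rangle \geq \alpha >0$, which in turn forces the closed half-space $H$ to be entirely disjoint from $\ND(X,\pi)$ so that \cref{theo:half_space_nonexpansive} genuinely applies). Apart from this, the argument is purely formal once \cref{theo:half_space_nonexpansive} and \cref{lem:BLclosdness} are in hand.
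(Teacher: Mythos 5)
Your proof is correct and follows essentially the same route as the paper's: a Hahn--Banach separation argument reducing the convex-hull statement to \cref{theo:half_space_nonexpansive}, followed by the deduction of the empty intersection. The only (cosmetic) difference is in the second step: the paper invokes Gordan's theorem to pass from ``$0$ is in the convex hull'' to ``the intersection of the open half-spaces is empty,'' whereas you unpack this into the direct computation $0=\sum_i \lambda_i \langle y,v_i\rangle$, which is a perfectly good elementary proof of exactly the implication Gordan's theorem supplies here.
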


When  $Y$ is the trivial system we obtain \cref{Thm:intro2} announced in the introduction. 
Recall that $\ND(X)$ is never empty when $X$ is infinite (\cref{theo:factors_general1}). It follows from the empty intersection condition that $\ND(X)$ always contains at least $2$ elements.
More precisely, for $d=2$, \cref{cor:half_space_Z2} implies the following dichotomy. Either  $\ND(X)$ contains at least $2$ independent  vectors $v_1, v_2$ and a third one in the convex cone generated  by $-v_1$ and $-v_2$, or $\ND(X)$  contains only two elements $v_1, v_2$ that are opposites (i.e., $v_1 =-v_2$).
These cases are optimal. For instance, for the Ledrappier's subshift the set $\ND(X)$ consists of the normal outgoing vectors of the faces of a triangle (see \cref{ex:Ledrappier}).
Concerning the second case, Hochman showed in \cite{Hochman:2011} that any direction in $\R^2$ appears as the unique nonexpansive direction of some $\Z^2$-dynamical system. Since any nonexpansive half-space defines a nondeterministic direction and by \cref{cor:half_space_Z2}, this system has only two symmetric nonexpansive half-spaces. 

\begin{proof}
By contradiction, assume that the origin does not belong to the convex hull generated by the set $\ND(X,\pi)$. Note that this set is closed as $\ND(X,\pi)$ is compact (\cref{lem:BLclosdness}). The Hahn-Banach theorem ensures the existence of a half-space $H$ which is disjoint from $\ND(X,\pi)$. \cref{theo:half_space_nonexpansive} then provides a contradiction.

Since the origin belongs to the convex hull generated by the set  $\ND(X,\pi)$, Gordan's theorem provides the interpretation in terms of intersections.
\end{proof}

A natural question is to ask how optimal is  the restriction on $\ND(X)$ in \cref{cor:half_space_Z2}. More precisely, we have the following question:
\begin{ques}\label{ques:realization}
    Given a closed set $N \subseteq \S^{d-1}$ such that the origin  of $\R^d$ belongs to its convex hull, does there exist a topological dynamical system $(X,T, \Z^d)$ such that $\ND(X) = N$?
\end{ques}
Of course, the former question also makes sense when restricted to some class of dynamical systems with specific properties (like transitivity, minimality, subshift of finite type like in \cite{Z}, etc.).

It is already  a challenging question for simple cases: for instance, we do not know if there is a $\Z^2$-action where $\ND(X)$ consists of three vectors, two of them opposites and a third one orthogonal to the others.

Previous results in the literature are not enough to answer the question but provide a partial answer. Recall that Boyle and Lind \cite{BD} and Hochman \cite{Hochman:2011} proved that any closed set of directions can be the set of nonexpansive directions, which we denote by $\NE(X)$, of a topological dynamical system $(X,T,\Z^2)$. For instance, for any vector $u$ determining an (orthogonal)  direction in $\NE(X)$, either $u$ or  $-u$ determines a nonexpansive $\ell_2$ horoball of this action. In particular, the set $\NE(X)$ can be reduced to a point (even defining a line with irrational slope).
Actually, let  $\pi\colon \S^{1} \to \R {\textbf{\rm P}}^1$ denote the covering onto the projective space, where the vector $u$ is mapped to the line orthogonal to $u$, so that $u$ and its opposite $-u$ are identified. Under this map one has that $\ND(X) \subseteq \pi^{-1}(\NE(X))$. Hence the local topological properties  of $\ND(X)$ and $\NE(X)$ are the same. However, this inclusion can be strict as Ledrappier's example \ref{ex:Ledrappier} shows: $\ND(X)$ consists only of $3$ vectors and none of their opposites are in $\ND(X)$.

One could ask the relative version of \cref{ques:realization}, that is: {\it Given a topological dynamical system $(Y,S,\Z^d)$ and a closed set $N \subseteq \S^{d-1}$ such that the origin  of $\R^d$ belongs to its convex hull, does there exist a topological dynamical system $(X,T, \Z^d)$ which is an infinite extension of $(Y,S,\Z^d)$ and $\ND(X,\pi) = N$?} This question is actually equivalent to \cref{ques:realization}. To see this, assuming that there is a positive answer to \cref{ques:realization}, given $N$ and $(Y,S,\Z^d)$, there exists $(X,T,\Z^d)$ such that $\ND(X)=N$. Consider $\pi \colon X\times Y\to Y$ the projection onto the second coordinate. It is immediate to check that $\ND(X\times Y,\pi)=\ND(X)=N$, which provides a positive answer to the relative version of \cref{ques:realization}.

 Recall that a horoball of any proper right-invariant distance on $\Z^d$ is included in a $\ell_2$ horoball (see the discussion after \cref{lem:NonemptyHoroball}). It follows that the conclusion of \cref{cor:half_space_Z2} is still true for any proper right-invariant distance  on $\Z^d$, namely the intersection of all the non expansive horoballs is empty.  This leads to ask 

\begin{ques} \label{ques:intersection_nonexpansive}
For which group/metrics,   the intersection of all nonexpansive horoballs of any action  on an infinite space is empty?
\end{ques}

The closeness of the set of nondeterministic vectors is important to get \cref{cor:half_space_Z2}.
It is also natural to wonder if \cref{lem:BLclosdness} is still true for metrics other than  $\ell_2$. More generally, is it always the case that the set of nonexpansive horoballs corresponds to a closed subset of horoballs, for any distance in the group? Before giving a partial answer to this question, let us precise the topology we may consider on the set of horoballs. Since the topology on the horofunctions is the compact open topology, a natural topology on the horoballs is then  given by the Hausdorff convergence on any compact set of the group \footnote{This means that $H_n\to H$ as $n\to \infty$ if and only if for all compact set $K\subseteq G$ and $\epsilon>0$ the Hausdorff distance between $\overline{H_n}\cap K$ and $\overline{H}\cap K$ is smaller than $\epsilon$ for all large enough $n$.}. With this topology, the following simple example shows that the set of nonexpansive horoballs is not always closed, even in $\Z^2$. As we shall see, the distance chosen on the group plays an essential role.

\medskip
\begin{example}[A $\Z^2$-subshift with a nonclosed set of  nondeterministic $\ell_1$ horoballs] \label{example:nonexp_noclosed}
Consider an infinite $\Z$-subshift $X$, \ie, a closed subset $ X \subseteq \{0,1\}^\Z$, invariant by the shift map $\sigma$. 
Define the $\Z^2$ system  $(X,T,\Z^2)$  endowed with the $\ell_1$-norm  where $T_{(n,m)}=\sigma^{n-2m}$. 
One checks that the $\ell_1$ horoball  
$\{(x,y):  -x< y< x\} $ is nonexpansive as well as all its translates along the  half line $y=x$,  $y > 0$.
If the set of nonexpansive horoballs for $\ell_1$ were closed, the horoball   $\{(x,y): y <x  \} $ would be a nonexpansive horoball for this subshift. But, as the action is the identity along the line   $\{2y=x\}\cap \Z^2$, it is straightforward to check that this is imposible
\end{example}

This example leads to the following question related to the geometry of groups.
\begin{ques}
Do there exist conditions on the group, the metric and/or the system so that the set of nonexpansive horoballs of any action is a closed set? Are there other interesting topologies on the set of horoballs for which the set of nonexpansive horoballs is closed? 
\end{ques}

\section{Distality in Cantor dynamics}  \label{sec:Cantor_dynamics}

In this section we illustrate further  applications of Robinson Crusoe theorem (\cref{theo:SchwartzmannVersionAbstraite}) in the context of zero dimensional dynamics, \ie, when the phase space $X$ is a zero dimensional space. 

Recall that for a topological dynamical system $(X,T,G)$ a point $x \in X$ is {\em almost periodic} if for every neighborhood $U$ of $x$ the set of return times $N(x,U) = \{g \in G : T_{g}(x) \in U\}$ is {\em syndetic} (\ie, there exists a compact set $K\subseteq G$ such that $KN(x,U) = G$). Equivalently, $x$ has a minimal orbit closure, \ie, $(\overline{Gx}, G)$ is  minimal. The topological dynamical system $(X,T,G)$ is {\em pointwise almost periodic} if every $x\in X$ is almost periodic.

The following result was obtained in \cite{AGW} (stated for finitely generated groups). Here we give a proof for any countable group whose horofunctions satisfy the conditions of \cref{lem:BigHoroball}.

\begin{prop}\label{prop:EquivptwiseRecurrent}
Let $(X,T,G)$ be a dynamical system with $X$ a zero dimensional space and $G$ a countable group. Assume that the group  admits a right-invariant and proper metric so that each horoball $H$ contains arbitrarily large balls. Then the following statements are equivalent,
\begin{enumerate}
\item\label{item:propER1} the action of $G$ is pointwise almost periodic;
\item\label{item:propER2} the relation $R= \{(x,y) \in  X\times X : y \in \overline {Gx}\}$ is closed. 
\end{enumerate}
\end{prop}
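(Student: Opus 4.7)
The plan is to prove both implications separately. The direction $(2) \Rightarrow (1)$ is straightforward and uses neither zero-dimensionality nor the horoball hypothesis. Given $x \in X$ and $y \in \overline{Gx}$, pick $(g_n)$ with $T_{g_n}(x) \to y$. Since $x = T_{g_n^{-1}}(T_{g_n}(x)) \in G \cdot T_{g_n}(x)$, the pair $(T_{g_n}(x), x)$ belongs to $R$ for every $n$; passing to the limit and using closedness of $R$ yields $(y, x) \in R$, i.e., $x \in \overline{Gy}$. Combined with the trivial inclusion $\overline{Gy} \subseteq \overline{Gx}$, this gives $\overline{Gx} = \overline{Gy}$, so $\overline{Gx}$ is minimal.

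For $(1) \Rightarrow (2)$ I would argue by contradiction. Assume the action is pointwise almost periodic, and suppose $R$ is not closed: there are $(x_n, y_n) \to (x, y)$ with $y_n \in \overline{Gx_n}$ but $y \notin \overline{Gx}$. By $(1)$, both $\overline{Gx}$ and $\overline{Gy}$ are disjoint minimal sets. Zero-dimensionality of $X$ provides a clopen set $U$ with $\overline{Gx} \subseteq U$ and $U \cap \overline{Gy} = \emptyset$. The crucial construction is the auxiliary set $A := \bigcap_{g \in G} T_g^{-1}(U)$ of points whose entire orbit lies in $U$: as an intersection of clopen sets it is closed, it is $G$-invariant by construction, contains $\overline{Gx}$, and is contained in $U$. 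For large $n$ we have $x_n \in U$ but $y_n \in \overline{Gx_n} \setminus U$, so $x_n \notin A$; combined with $x \in A$ this shows that $A$ is not open. Hence $O := X \setminus A$ is open, $G$-invariant, not closed, and its boundary satisfies $\partial O \subseteq A \subseteq U$. Applying the Robinson Crusoe theorem (\cref{theo:SchwartzmannVersionAbstraite}) with the open neighborhood $U$ of $\partial O$ yields a horoball $H$ and a point $z \in O$ such that $T_g(z) \in U$ for every $g \in H$.

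To close the argument I would exploit both hypotheses on $G$. Since $z \notin A$, some $g_0 \in G$ satisfies $T_{g_0}(z) \in W := X \setminus U$; by almost periodicity of $z$ the return set $N(z, W) = \{g \in G : T_g(z) \in W\}$ is syndetic, so $K \cdot N(z, W) = G$ for some compact $K \subseteq B_R(1_G)$. The hypothesis that $H$ contains arbitrarily large balls provides $g_R$ with $B_R(g_R) \subseteq H$; writing $g_R = k \cdot s$ with $k \in K$ and $s \in N(z, W)$, right-invariance gives $d(s, g_R) = d(k^{-1}, 1_G) \leq R$, so $s \in B_R(g_R) \subseteq H$. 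Then $T_s(z) \in U$ (from Robinson Crusoe) and $T_s(z) \in W = X \setminus U$ (from $s \in N(z, W)$), a contradiction. The hardest step, I expect, is identifying the right invariant open set to feed into Robinson Crusoe: the naive candidate $X \setminus \overline{Gx}$ is too crude because one cannot control its boundary in terms of the clopen separator, whereas the construction of $A$ — enabled precisely by zero-dimensionality — automatically places $\partial O$ inside $U$, and the final quantitative match between syndetic return times and the existence of large balls in every horoball is what forces the contradiction.
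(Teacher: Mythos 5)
Your proof is correct and takes essentially the same approach as the paper's: your set $A=\bigcap_{g\in G}T_g^{-1}(U)$ and its complement $O$ coincide exactly with the open invariant set the paper feeds into the Robinson Crusoe theorem (the paper's $\{y'\in\overline{Gx}^c:\overline{Gy'}\cap U^c\neq\emptyset\}$ is precisely $X\setminus A$), and both arguments then derive the contradiction from syndeticity of the return set to $U^c$ versus the horoball containing arbitrarily large balls. Your final step merely spells out quantitatively, via right-invariance and the symmetry of balls at the identity, the syndeticity contradiction that the paper states more tersely.
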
 We notice  that the conclusion of  \cref{prop:EquivptwiseRecurrent} is still true when the acting group $G$ is finite. 
\begin{proof}  The implication \eqref{item:propER2} $\Rightarrow$  \eqref{item:propER1} is valid for any topological $G$-action. By contradiction, assume there exists $x \in X$ that is not almost periodic. Then, there exists an almost periodic point $y \in \overline{Gx}$ such that $x \notin \overline{Gy}$. Let $(g_{n})_{n\in \N} \subseteq G$ be a sequence such that $T_{g_{n}}(x)$ converges to $y$ as $n$ goes to infinity. It follows that  $(T_{g_{n}}(x), x) \in R$ converges to $(y,x) \in R$, a contradiction.

We also prove \eqref{item:propER1} $\Rightarrow$  \eqref{item:propER2} by contradiction. Assume that $R$ is not closed. Then there exists a sequence $((x_{n},y_{n}))_{n\in \N}$ in $R$ converging to $(x,y) \notin R$. Let $U$ be a clopen subset such that $\overline{Gx} \subseteq U$ and $y \notin U$. Let $O$ be the set $\{y' \in \overline{Gx}^{c} : \overline{Gy'} \cap U^{c} \neq \emptyset\}$. One checks that $O$ is a $G$-invariant open set, which is not closed since $x_n$ belongs to $O$ for large enough $n$. Moreover, $U$ is also a neighborhood of $\partial O$. It follows from Theorem \ref{theo:SchwartzmannVersionAbstraite} that there exist $y' \in O$ and a horoball $H$ such that $T_{g}(y') \in U$ for every $g \in H$. Since each horoball contains arbitrarily large balls, the set of return times $N(y", U^{c})$ of some point $y"\in U^c$ in the orbit of $y'$, cannot be syndetic. This is a contradiction.
\end{proof}

It follows from \cref{lem:BigHoroball} that any finitely generated group fulfills   the assumptions of \cref{prop:EquivptwiseRecurrent}. It would be interesting to exhibit other groups or families of groups that satisfy the assumptions of \cref{prop:EquivptwiseRecurrent}. For now, we do not rule out that this family consists only of finitely generated groups.

Let $\pi\colon (X,T, G)\to (Y,S,G)$ be a factor map between the topological dynamical systems $(X,T, G)$ and $(Y,S,G)$. We say that $\pi$ is {\em equicontinuous} (or that $X$ is an equicontinuous extension of $Y$) if for any $\epsilon>0$ there exists $\delta>0$ such that $\dist(x,y)<\delta$ and $\pi(x)=\pi(y)$ implies that $\dist(T_g(x),T_g(y))<\epsilon$ for all $g\in G$. The factor map $\pi$ is {\em distal} (or $X$ is a distal extension of $Y$) if $\displaystyle \inf_{g\in G}{\rm dist} (T_{g}(x), T_{g}(y)) >0$ whenever $\pi(x)=\pi(y)$ and $x \neq y$. 
If the factor $\pi$ is trivial one says that $(X,T,G)$ is equicontinous, or distal, respectively.  

Two points $x,y\in X$ are {\em $\pi$-regionally proximal} if for any $\epsilon>0$ there exist points $x',y'\in X$ and $g\in G$ such that $\pi(x')=\pi(y')$, ${\rm dist} (x,x')<\epsilon$, $
{\rm dist} (y,y')<\epsilon$ and ${\rm dist} (T_g x',T_g y')<\epsilon$. We let $\RP(\pi)$ denote the set of $\pi$-regionally proximal points.  
It is well known (see for instance \cite[Chapter 7, Theorem 2]{Aus}) that a factor map is equicontinuous if and only if $\RP(\pi)=\triangle_X$ and that if a factor map is equicontinuous, then it is a distal factor. 

\begin{cor} \label{cor:factor_distals_are_equicontinuous}
Let $\pi\colon (X,T,G)\to (Y,S,G)$ be a factor map, where $G$ is a countable group, $X$ is zero dimensional and $(Y,S,G)$ is a pointwise almost periodic system. Assume that the group  admits a right-invariant and proper metric so that each horoball $H$ contains arbitrarily large balls.
Then, the factor $\pi$ is distal if and only if it is equicontinuous.
\end{cor}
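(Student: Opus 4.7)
The plan is to prove the nontrivial direction, that $\pi$ distal implies $\pi$ equicontinuous, by contradiction via \cref{prop:EquivptwiseRecurrent} applied to the pair system $(R_\pi, T^{(2)}, G)$, where $R_\pi = \{(x,y) \in X \times X : \pi(x) = \pi(y)\}$ and $T^{(2)}_g(x,y) = (T_g x, T_g y)$. The converse, that equicontinuous factor maps are distal, is a classical fact (see \cite[Chapter 7]{Aus}).

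The first step is to lift the pointwise almost periodic structure to $(R_\pi, T^{(2)}, G)$. For any $x \in X$, the restriction $\pi|_{\overline{Gx}} \colon \overline{Gx} \to \overline{G\pi(x)}$ is a distal factor map onto a minimal system (minimal because $\pi(x)$ is almost periodic in $Y$), so by Furstenberg's theorem on distal extensions, $\overline{Gx}$ is itself minimal and $x$ is almost periodic; hence $(X, T, G)$ is pointwise almost periodic. Next, for any $(x, y) \in R_\pi$ with $x \neq y$, the pair is distal in $X$ by hypothesis and both coordinates are almost periodic, so by the classical theorem that a distal pair of almost periodic points is almost periodic in the product, $(x, y)$ is almost periodic in $X \times X$, hence in $R_\pi$; the case $x=y$ is trivial. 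Thus $(R_\pi, T^{(2)}, G)$ is pointwise almost periodic. Since $R_\pi$ is closed in the zero-dimensional space $X \times X$, it is itself zero-dimensional, and \cref{prop:EquivptwiseRecurrent} applies to yield that the orbit-closure relation
\[
R^{(2)} = \{((a, b), (c, d)) \in R_\pi \times R_\pi : (c, d) \in \overline{G(a, b)}\}
\]
is closed in $R_\pi \times R_\pi$.

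Now I would suppose for contradiction that $\pi$ is not equicontinuous: there exist $\epsilon_0 > 0$ and sequences $(x_n, y_n) \in R_\pi$ and $g_n \in G$ with $\dist(x_n, y_n) \to 0$ and $\dist(T_{g_n} x_n, T_{g_n} y_n) \geq \epsilon_0$ for all $n$. Extracting subsequences, $(x_n, y_n) \to (z, z) \in \Delta_X$ and $(T_{g_n} x_n, T_{g_n} y_n) \to (u, v) \in R_\pi$ with $u \neq v$. Since $(T_{g_n} x_n, T_{g_n} y_n)$ belongs to the orbit of $(x_n, y_n)$, the pair $((x_n, y_n), (T_{g_n} x_n, T_{g_n} y_n))$ lies in $R^{(2)}$; passing to the limit in the closed relation $R^{(2)}$ gives $((z, z), (u, v)) \in R^{(2)}$, i.e.\ $(u, v) \in \overline{G(z, z)} \subseteq \Delta_X$, forcing $u = v$ and contradicting $u \neq v$. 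The main obstacle is establishing that $(R_\pi, T^{(2)}, G)$ is pointwise almost periodic so that \cref{prop:EquivptwiseRecurrent} can be invoked; once the closedness of $R^{(2)}$ is secured, the contradiction against non-equicontinuity is immediate from the sequences witnessing it.
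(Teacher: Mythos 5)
Your overall strategy matches the paper's: pass to the system $(R_\pi, T^{(2)}, G)$, show it is pointwise almost periodic, invoke \cref{prop:EquivptwiseRecurrent} to get closedness of the orbit relation, and derive a contradiction with non-equicontinuity. Your final contradiction step is correct and is essentially the same as the paper's (the paper goes through the equivalence $\RP(\pi)=\Delta_X$, while you contradict non-equicontinuity directly; the content is identical).

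The gap is in the step where you establish pointwise almost periodicity of $(R_\pi, T^{(2)}, G)$. You factor this into (a) $X$ is pointwise almost periodic, and (b) a ``classical theorem that a distal pair of almost periodic points is almost periodic in the product.'' Step (a) is fine. But step (b) is not a standard theorem: it appears to conflate a \emph{distal pair} $(x,y)$ (meaning merely $\inf_g\dist(T_gx,T_gy)>0$) with $(x,y)$ being a \emph{distal point} of $X\times X$ (meaning $(x,y)$ has no proximal companion in the product). The classical result is that distal \emph{points} are almost periodic, and that an $n$-tuple of distal points is almost periodic in the product. But here $x$ and $y$ need not be distal points of $X$ at all (they may well have proximal companions); all you know is that they are not proximal to \emph{each other}. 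For $(x,y)$ to be almost periodic you need a single minimal idempotent $v\in E(X)$ with $vx=x$ \emph{and} $vy=y$; choosing $v$ to fix $x$ only gives you that $(y,vy)$ is proximal, and the distality of the single pair $(x,y)$ does not let you conclude $vy=y$.

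The paper closes precisely this gap by exploiting the factor structure: it picks a minimal idempotent $u\in E(Y)$ fixing $\pi(x)$, lifts it along $\pi_*$ to a minimal idempotent $v\in E(X)$, and then observes that $\pi(vx)=u\pi(x)=\pi(x)$ and $\pi(vy)=u\pi(y)=\pi(y)$, so the proximal pairs $(x,vx)$ and $(y,vy)$ lie inside $\pi$-fibers; the distality of the \emph{factor map} then forces $vx=x$ and $vy=y$, and $v(x,y)=(x,y)$ gives almost periodicity. In other words, the within-fiber proximality that you would need to kill is exactly what the hypothesis on $\pi$ controls, and your argument never invokes that hypothesis at the critical moment. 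You should replace step (b) with this idempotent-lifting argument (or an equivalent), since the unqualified claim about distal pairs is not justified.
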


\begin{proof}
We show the nontrivial implication. Assume that $\pi$ is distal and consider the zero dimensional system $(R_{\pi},T^{(2)}, G)$, where $R_\pi=\{(x,y)\in X\times X: \pi(x)=\pi(y)\}$ and $T^{(2)}$ is the diagonal action $T^{(2)}_g(x,y)= (T_g(x),T_g(y))$. 
We claim that the distality of $\pi$ implies that $(R_{\pi},G)$ is pointwise almost periodic. To show this, we need to use the machinery of the enveloping semigroup (to avoid details about this theory, we refer to the book \cite[Chapters 3 and 6]{Aus}). Let $E(X,T,G)$ and $E(Y,S,G)$ denote the enveloping semigroup of the systems $(X,T,G)$ and $(Y,S,G)$ respectively. Since $\pi(x)$ is an almost periodic point, we can find $u\in E(Y,S,G)$ a minimal idempotent such that $u\pi(x)=\pi(x)$. We can take a minimal idempotent $v\in E(X,T,G)$ such that $\pi_{\ast}(v)=u$, where $\pi_{\ast}\colon E(X,T,G)\to E(Y,S,G)$ is the natural induced semigroup homomorphism (see \cite[Chapter 3, Theorem 7]{Aus}). Then, $v(x,y)=(vx,vy)$ and, since $u\pi(x)=\pi(x)$,  we obtain $\pi(vx)=\pi(x)=\pi(y)=\pi(vy)$. Noting that $(x,vx)$ and $(y,vy)$ are proximal pairs (\ie, $\inf_{g\in G} \dist(x,vx)=\inf_{g\in G} \dist(y,vy)=0$), we obtain that $x=vx$ and $y=vy$. Hence, $(x,y)=v(x,y)$ is a minimal point.

By \cref{prop:EquivptwiseRecurrent}, the orbit relation $R=\{((x_1,y_1),(x_2,y_2)) \in R_{\pi}\times R_{\pi} : (x_2,y_2) \in \overline{G(x_1,y_1)}\}$ is closed. From this, it is classical to deduce that $\RP(\pi)=\triangle_X$, we provide the argument for completeness. Let $(x,y)\in \RP(\pi)$ and let $(x_n,y_n)\in R_{\pi}$ and $g_n\in G$ such that $(x_n,y_n)\to (x,y)$ and $(T_{g_n}x_n,T_{g_n}y_n)\to (z,z)$, for $z\in X$ and a sequence $(g_n)$ in $G$. Since $((T_{g_n}x_n,T_{g_n}y_n),(x_n,y_n))\in R$, and this relation is closed, we obtain that $((z,z),(x,y))\in R$. This implies that $x=y$. 
We conclude that $\pi$ is an equicontinuous extension. 

\end{proof}

When $(Y,S,G)$ is the trivial system, we obtain the following corollary.
\begin{cor}\label{cor:NoDistalCantorAction}
Let $(X,T,G)$ be a topological dynamical where $X$ is zero dimensional and $G$ is countable. Assume that the group $G$ admits a proper and right-invariant metric so that each horoball contains arbitrarily large balls. Then, the action of $G$ is distal if and only if it is equicontinuous.
\end{cor}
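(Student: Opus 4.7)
The plan is essentially to invoke \cref{cor:factor_distals_are_equicontinuous} with $(Y,S,G)$ taken to be the trivial one-point system. So the proof reduces to checking that this specialization recovers the desired statement, and the only real content has already been supplied by the preceding corollary (and ultimately by Robinson Crusoe via \cref{prop:EquivptwiseRecurrent}).

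First I would verify the trivial direction: if the action is equicontinuous, then by definition, for every $\epsilon>0$ there exists $\delta>0$ with $\dist(x,y)<\delta$ implying $\dist(T_g(x),T_g(y))<\epsilon$ for all $g\in G$. Taking a pair of distinct points $x\neq y$, choose $\epsilon_0 = \dist(x,y)/2$ and let $\delta_0$ be the corresponding $\delta$. If $\inf_{g\in G}\dist(T_g(x),T_g(y))=0$, apply the action to collapse the pair below $\delta_0$, then use equicontinuity to contradict the existence of $g$ with $\dist(T_g(x),T_g(y)) \ge 2\epsilon_0$. Hence $\inf_g \dist(T_g(x),T_g(y))>0$, \ie, the system is distal.

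For the nontrivial direction, let $(Y,S,G)$ be the trivial system $\{\ast\}$ with the unique action, and let $\pi\colon X\to Y$ be the constant factor map. The trivial system is obviously pointwise almost periodic. The factor $\pi$ is distal in the sense of \cref{sec:Cantor_dynamics} precisely when the $G$-action on $X$ is distal (the condition $\pi(x)=\pi(y)$ is automatic), and similarly $\pi$ is equicontinuous precisely when the action is equicontinuous. Since $G$ is countable and admits a proper right-invariant metric whose horoballs each contain arbitrarily large balls, the hypotheses of \cref{cor:factor_distals_are_equicontinuous} are fulfilled, so distality of $\pi$ implies its equicontinuity, which finishes the proof.

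The main (and essentially the only) obstacle was already resolved in the proofs of \cref{prop:EquivptwiseRecurrent} and \cref{cor:factor_distals_are_equicontinuous}, where Robinson Crusoe theorem is used to show that the orbit relation on $R_\pi$ is closed, and then the enveloping semigroup argument upgrades distality to $\RP(\pi)=\Delta_X$. At the level of this corollary there is nothing further to prove beyond observing that the trivial factor satisfies the pointwise almost periodicity hypothesis.
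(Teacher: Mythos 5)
Your argument matches the paper exactly: the paper obtains this corollary by specializing \cref{cor:factor_distals_are_equicontinuous} to the trivial factor, which is precisely what you do, and the remaining details you verify (pointwise almost periodicity of the trivial system, the equivalence of distality/equicontinuity of the constant factor map with those of the $G$-action, and the easy direction equicontinuous $\Rightarrow$ distal) are routine and correct.
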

We remark that \cref{cor:NoDistalCantorAction} was obtained in \cite{AGW} under the assumption that $G$ is finitely generated.  Moreover,  \cref{cor:NoDistalCantorAction} does not hold for all countable groups. In \cite{MW} (page 266), the authors constructed examples of (non finitely generated) groups that admit distal  but non equicontinuous  actions on a Cantor space. We thank Benjy Weiss and Xiangdong Ye for pointing us to this example and reference.

The next corollary is  Theorem 1.3 in \cite{MS}. Unlike \cite{MS}, we derive it as a consequence of \cref{cor:NoDistalCantorAction}.

For a group $G$, and a subgroup $S\leqslant G$, we let ${\rm Cent}_{G}(S)$ denote the {\em centralizer} of $S$ in $G$, \ie, the set of elements $g \in G$ such that $gs=sg$ for all $s\in S$. It is a group under composition.   
For a topological dynamical system $(X,T,G)$, let ${\rm Homeo}(X)$ denote the group of all the self-homeomorphisms of $X$ and $ \langle T \rangle$ the group generated by the elements $T_g \in {\rm Homeo}(X)$, $g \in G$.

\begin{cor}\cite{MS}\label{cor:MeyerovitchSalo}
Let $(X,T,G)$ be a $G$-subshift for a countable group $G$. Let $N \leqslant {\rm Cent}_{{\rm Homeo}(X)} (\langle T \rangle)$ be a {finitely generated} subgroup. If each $N$-orbit in $X$ is finite, then the group $N$ is finite.
\end{cor}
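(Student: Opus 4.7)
My approach is to apply \cref{cor:NoDistalCantorAction} to the $N$-action on $X$ and then exploit the commutation of $N$ with the shift action to translate equicontinuity into a finite ``local rule'' description of the elements of $N$, whence finiteness of $N$.

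First I would set up the $N$-action. Since $N$ is a countable (because finitely generated) group of homeomorphisms of the subshift $X \subseteq A^G$, we get a topological dynamical system $(X, N)$ with $X$ zero-dimensional. As $N$ is finitely generated, endow it with the $\ell_1$ word metric; by \cref{lem:BigHoroball} this is a proper right-invariant distance whose horoballs all contain arbitrarily large balls, so the standing hypotheses of \cref{cor:NoDistalCantorAction} are met.

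Next I would check distality of the $N$-action. Take any distinct $x,y \in X$. Since each $n \in N$ is a bijection of $X$, $nx = ny$ would force $x = y$; hence the orbit $\{(nx,ny) : n \in N\}$ (a finite set, as it is contained in the product of the two finite orbits $Nx$ and $Ny$) avoids the diagonal $\Delta_X$. A finite subset of $X^2 \setminus \Delta_X$ lies at positive distance from $\Delta_X$, so $\inf_{n\in N}\dist(nx,ny) > 0$. Thus the $N$-action is distal, and \cref{cor:NoDistalCantorAction} yields that it is equicontinuous.

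Finally I would run a Curtis--Hedlund--Lyndon style argument. Fix a metric on $X$ for which $\dist(x,y) < 1$ forces $x(1_G) = y(1_G)$. Equicontinuity of the family $\{n : n \in N\}$ provides $\delta > 0$ such that $\dist(x,y) < \delta$ implies $\dist(nx,ny) < 1$ for every $n \in N$; choose a finite set $F \subseteq G$ such that $x|_F = y|_F$ implies $\dist(x,y) < \delta$. Then each $n \in N$ admits a local rule $\phi_n \colon A^F \to A$ defined by $\phi_n(x|_F) = (nx)(1_G)$. Since $n$ commutes with every $T_g$, the rule $\phi_n$ determines $n$ on the whole of $X$, so $n \mapsto \phi_n$ embeds $N$ into the finite set $A^{(A^F)}$, proving that $N$ is finite. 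The proof is essentially routine once \cref{cor:NoDistalCantorAction} is in hand; the only step that requires some care is verifying that equicontinuity of the whole family $N$ gives a single uniform ``window'' $F$ valid for all $n$ simultaneously, which is exactly what equicontinuity asserts.
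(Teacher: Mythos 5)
Your proposal is correct, and its core is the same as the paper's: observe that finite $N$-orbits force the $N$-action on the zero-dimensional space $X$ to be distal, endow $N$ with its word metric so that \cref{lem:BigHoroball} and \cref{cor:NoDistalCantorAction} apply, and conclude equicontinuity. Your distality argument (the $N$-orbit of $(x,y)$ in $X^2$ is a finite set avoiding the diagonal, hence at positive distance from it) is a nice explicit rendering of what the paper leaves to one line.

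Where you genuinely diverge is in the final step. The paper's proof is more abstract: it quotes the well-known fact that expansivity of the shift makes ${\rm Cent}_{{\rm Homeo}(X)}(\langle T\rangle)$ discrete in the uniform topology, and then observes that equicontinuity makes $N$ precompact by Arzel\`a--Ascoli, so that $N$, being discrete with compact closure, must be finite. You instead unwind this into a direct Curtis--Hedlund--Lyndon argument: equicontinuity supplies a uniform finite window $F\subseteq G$ such that each $n\in N$ is implemented by a block map $\phi_n\colon A^F\to A$ (well-definedness follows from the uniform $\delta$, and commutation with the shift lets $\phi_n$ reconstruct $n$ everywhere), so $N$ injects into the finite set of such block maps. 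In effect, you re-derive the discreteness-of-centralizer fact rather than citing it, and replace the compactness argument with an explicit counting bound. Both routes are valid; yours is more self-contained and concrete, while the paper's is shorter because it leans on a classical fact about centralizers of expansive actions.
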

In particular, when $N$ is reduced to  shift maps, there is no infinite subshift for a  finitely generated group $G$ where all the shift-orbits are finite. 

\begin{proof}
Since the shift action is expansive and $X$ is zero dimensional, it is well known that the centralizer group ${\rm Cent}_{{\rm Homeo}(X)} (\langle T \rangle)$ is discrete in ${\rm Homeo}(X)$ for the uniform topology.
If each $N$-orbit is finite, its  action is distal. It follows from Corollary \ref{cor:NoDistalCantorAction} that the action is equicontinuous, hence $N$ is a compact subgroup of ${\rm Homeo}(X)$.  As a discrete group, it is finite. 
\end{proof}

\end{document}